\newtheorem{theorem}{Theorem}[section]
\newtheorem{corollary}[theorem]{Corollary}
\newtheorem{proposition}[theorem]{Proposition}
\newtheorem{lemma}[theorem]{Lemma}
\theoremstyle{definition}    
\newtheorem{definition}[theorem]{Definition}
\theoremstyle{remark}
\newtheorem{remark}[theorem]{Remark}
\newcommand{\pair}[2]{\langle #1, #2 \rangle}
\newcommand{\ignore}[1]{}
\newcommand{\ol}[1]{\overline{#1}}
\newcommand{\ul}[1]{\underline{#1}}
\newcommand{\ti}[1]{\widetilde{#1}}
\newcommand{\wh}[1]{\widehat{#1}}
\newcommand{\st}[1]{\mathsf{#1}}
\newcommand{\scr}[1]{\mathscr{#1}}
\newcommand{\tn}[1]{\textnormal{#1}}
\renewcommand{\i}{{\mathrm{i}}}
\def\Ad{\ensuremath{\textnormal{Ad}}}
\def\g{\ensuremath{\mathfrak{g}}}
\def\k{\ensuremath{\mathfrak{k}}}
\def\t{\ensuremath{\mathfrak{t}}}
\def\n{\ensuremath{\mathfrak{n}}}
\def\so{\ensuremath{\mathfrak{so}}}
\def\spin{\ensuremath{\mathfrak{spin}}}
\def\hvee{\ensuremath{\textnormal{h}^\vee}}
\def\L{\ensuremath{\mathcal{L}}}
\def\R{\ensuremath{\mathcal{R}}}
\def\E{\ensuremath{\mathcal{E}}}
\def\B{\ensuremath{\mathcal{B}}}
\def\P{\ensuremath{\mathcal{P}}}
\def\Y{\ensuremath{\mathcal{Y}}}
\def\V{\ensuremath{\mathcal{V}}}
\def\W{\ensuremath{\mathcal{W}}}
\def\K{\ensuremath{\mathcal{K}}}
\def\X{\ensuremath{\mathcal{X}}}
\def\M{\ensuremath{\mathcal{M}}}
\def\Q{\ensuremath{\mathcal{Q}}}
\def\calK{\ensuremath{\mathcal{K}}}
\def\c{\ensuremath{\mathsf{c}}}
\def\dirac{\ensuremath{\slashed{\partial}}} 
\def\sY{\ensuremath{\mathsmaller{\mathcal{Y}}}}
\def\sDM{\ensuremath{\mathsmaller{DM}}}
\def\bC{\ensuremath{\mathbb{C}}}
\def\bR{\ensuremath{\mathbb{R}}}
\def\bZ{\ensuremath{\mathbb{Z}}}
\def\bN{\ensuremath{\mathbb{N}}}
\def\bB{\ensuremath{\mathbb{B}}}
\def\End{\ensuremath{\textnormal{End}}}
\def\Hom{\ensuremath{\textnormal{Hom}}}
\def\pr{\ensuremath{\textnormal{pr}}}
\def\.{\ensuremath{\cdot}}
\def\ker{\ensuremath{\textnormal{ker}}}
\def\coker{\ensuremath{\textnormal{coker}}}
\def\id{\ensuremath{\textnormal{id}}}
\def\Cliff{\ensuremath{\textnormal{Cliff}}}
\def\index{\ensuremath{\textnormal{index}}}
\def\dim{\ensuremath{\textnormal{dim}}}
\def\aff{\ensuremath{\textnormal{aff}}}
\def\KK{\ensuremath{\textnormal{KK}}}
\def\Sym{\ensuremath{\textnormal{Sym}}}
\def\Crit{\ensuremath{\textnormal{Crit}}}
\def\dom{\ensuremath{\textnormal{dom}}}
\def\K{\ensuremath{\textnormal{K}}}
\def\Bott{\ensuremath{\textnormal{Bott}}}
\def\Cyl{\ensuremath{\textnormal{Cyl}}}
\def\wt{\ensuremath{\textnormal{wt}}}
\def\Irr{\ensuremath{\tn{Irr}}}
\def\hLambda{\ensuremath{\widehat{\Lambda}}}
\begin{document}
\sloppy
\title[Norm-square localization]{Norm-square localization and the quantization of Hamiltonian loop~group~spaces}
\author{Yiannis Loizides} \address{Department of Mathematics, Pennsylvania State University, McAllister Building, University Park, Pennsylvania, 16802} \email{yxl649@psu.edu}
\author{Yanli Song} \address{Department of Mathematics, Washington University in St. Louis, Cupples Building, One Brookings Dr., St. Louis, Missouri, 63130} \email{yanlisong@wustl.edu}

\begin{abstract}
In an earlier article we introduced a new definition for the `quantization' of a Hamiltonian loop group space $\M$, involving the equivariant $L^2$-index of a Dirac-type operator $\scr{D}$ on a non-compact finite dimensional submanifold $\Y$ of $\M$.  In this article we study a deformation of this operator, similar to the work of Tian-Zhang and Ma-Zhang.  We obtain a formula for the index with infinitely many non-trivial contributions, indexed by the components of the critical set of the norm-square of the moment map.  This is the main part of a new proof of the $[Q,R]=0$ theorem for Hamiltonian loop group spaces.
\end{abstract}
\maketitle
\vspace{-0.5cm}

\section{Introduction}
Let $G$ be a compact connected Lie group with Lie algebra $\g$.  Let $\mu \colon M \rightarrow \g^\ast$ be a compact Hamiltonian $G$-space, equipped with a prequantum line bundle $L$.  Choosing a $G$-equivariant compatible almost complex structure, one obtains a spin-c Dirac operator $\dirac$ acting on sections of $\wedge T^\ast_{0,1}M \otimes L$.  Its equivariant index is an element of the representation ring of $G$.  The quantization-commutes-with-reduction ($[Q,R]=0$) theorem (cf. \cite{GuilleminSternbergConjecture,MeinrenkenSymplecticSurgery,
TianZhang,ParadanRiemannRoch}) says that the multiplicity of the irreducible representation with highest weight $\lambda$ equals the index of a similarly-defined operator on the symplectic quotient $\mu^{-1}(\lambda)/G_\lambda$ (this must be modified slightly in case $\lambda$ is not a regular value \cite{MeinrenkenSjamaar}).

Choose an invariant inner product on $\g$.  A well-known approach to $[Q,R]=0$ due to Tian-Zhang \cite{TianZhang} utilizes a deformation of $\dirac$:
\begin{equation} 
\label{eqn:TianZhangDef}
\dirac_t=\dirac -\i t\c(v_M), \qquad t \in \bR
\end{equation}
where $\c(-)$ denotes Clifford multiplication, and $v_M$ is the Hamiltonian vector field of the function $\tfrac{1}{2}\|\mu\|^2$.  As the parameter $t \rightarrow \infty$, sections in the kernel of $\dirac_t$ `localize' near the vanishing locus $Z=\{m \in M|v_M(m)=0\}=\Crit(\|\mu\|^2)$.  This turns out to be closely related to a formula of Paradan \cite{ParadanRiemannRoch} for the index of $\dirac$, involving contributions from the components of $Z$.  One has (cf. \cite{Kirwan})
\[ Z=\bigcup_{\beta \in \B} G\cdot(M^\beta \cap \mu^{-1}(\beta)),\]
where $\B \subset \t_+$ is a finite discrete subset of a positive Weyl chamber, and $M^\beta$ is the fixed-point submanifold of the 1-parameter subgroup generated by $\beta \in \t$.  Thus there will be a contribution from $Z_0=\mu^{-1}(0)$, together with `correction terms' from $0\ne \beta \in \B$.  One can show that the only contribution to the multiplicity of the trivial representation comes from $Z_0$.  Combined with an argument of a more local nature (near $\mu^{-1}(0)$), this leads to a proof of the $[Q,R]=0$ theorem.

In this article we prove analogous results for Hamiltonian loop group spaces.  This work builds on earlier articles \cite{LMSspinor} (joint with E. Meinrenken) and \cite{LSQuantLG}.  We very briefly summarize some results from these papers here, and in somewhat greater detail in Section \ref{sec:DiracTypeLoopGroup}.  Assume $G$ is also simple and simply connected for simplicity.  Let $LG$ denote the loop group of $G$, the space of maps $S^1 \rightarrow G$ of some fixed Sobolev level $s>\tfrac{1}{2}$.  Let $\Phi_{\M}\colon \M \rightarrow L\g^\ast$ be a Hamiltonian $LG$-space, with level $k>0$ prequantum line bundle $L$ (cf. \cite{AMWVerlinde}).  Fix a maximal torus $T \subset G$.  In earlier work \cite{LMSspinor} with E. Meinrenken, we constructed a finite-dimensional `global transversal' $\Y \subset \M$, as well as a canonical spinor module $S_0 \rightarrow \Y$.  The submanifold $\Y$ is a small `thickening' of the (possibly) singular subset $\X=\Phi_{\M}^{-1}(\t^\ast) \subset \M$.  In \cite{LSQuantLG} we studied a Dirac-type operator $\scr{D}$ on $\Y$ acting on sections of $\wedge \n_- \wh{\otimes}S$, where $S=S_0\otimes L$ and $\n_- \subset \g_{\bC}$ denotes the sum of the negative root spaces of $G$.  The operator $\scr{D}$ was shown to represent an index pairing between a spin-c Dirac operator for $S$, and the pull back of a Bott-Thom element for $\g/\t$, the latter formally playing the role of a Poincar{\'e} dual to $\X$ in $\Y$.

In \cite{LSQuantLG} we proved that $\scr{D}$ has a well-defined index $\index(\scr{D}) \in R^{-\infty}(T)$, which is, moreover, the Weyl-Kac numerator (restricted to $1 \in S^1_{\tn{rot}}$) of an element of the level $k$ fusion ring $R_k(G)$, the analogue of the representation ring for level $k$ positive energy representations of the loop group.  The latter motivated us to define the `quantization' of $(\M,L)$ as this particular element of $R_k(G)$.  In a related paper \cite{LoizidesGeomKHom}, the first author showed that this definition agrees with that of E. Meinrenken \cite{MeinrenkenKHomology} based on quasi-Hamiltonian spaces, twisted K-homology, and the Freed-Hopkins-Teleman theorem.

The manifold $\Y$ is equipped with a moment map $\phi \colon \Y \rightarrow \t$, which is proper on the support of the Bott-Thom element.  In Section \ref{sec:WittenDef} we introduce a deformation $\scr{D}_t$, defined by a formula similar to \eqref{eqn:TianZhangDef}, except that we use a re-scaled version of $v_M$ which has bounded norm.  For $t>0$, the operators $\scr{D}_t$ have the same index (in $R^{-\infty}(T)$) as $\scr{D}$.

In Section \ref{sec:MaZhangType} we prove a formula for $\index(\scr{D})$ which is inspired by work of Ma and Zhang \cite{MaZhangTransEll}.  The index is expressed as a sum of contributions indexed by the components $Z_\beta=\Y^\beta \cap \phi^{-1}(\beta)$ of $Z=\{v_{\sY}=0\}$:
\begin{equation}
\label{eqn:LimAPS0}
\index(\scr{D})=\sum_{\beta \in W\cdot \B} \lim_{t \rightarrow \infty}\index_{\tn{APS}}(\scr{D}_t\upharpoonright U_\beta).
\end{equation} 
Each contribution is a limit (in $R^{-\infty}(T)$) as $t \rightarrow \infty$, of the index of an Atiyah-Patodi-Singer boundary value problem on a compact neighborhood $U_\beta$ of $Z_\beta \cap \X$.  The sum over $\beta$ in \eqref{eqn:LimAPS0} is infinite, converging in $R^{-\infty}(T)$.  For the reader's benefit we have provided a brief introduction to elliptic boundary value problems in Section \ref{sec:PrelimDirac}, mostly following the recent references by B{\"a}r and Ballmann \cite{BarBallmannGuide,BarBallmann}.

In Section \ref{sec:Paradan} we follow a strategy similar to Ma-Zhang \cite{MaZhangBravermanIndex,MaZhangTransEll} and Braverman \cite{Braverman2002} to prove a formula for the contributions in \eqref{eqn:LimAPS0} in terms of transversally elliptic operators.  The end result is a formula in the spirit of Paradan \cite{ParadanRiemannRoch}:
\begin{equation} 
\label{eqn:NormSqrFormula}
\index(\scr{D})=\sum_{\beta \in W\cdot \B} \index(\sigma_{\beta,\theta} \otimes \Sym(\nu_\beta)),
\end{equation}
where $\sigma_{\beta,\theta}$ is a transversally elliptic symbol on the fixed-point set $\Y^\beta$, and $\nu_\beta$ is the normal bundle to $\Y^\beta$ in $\Y$ equipped with a `$\beta$-polarized' complex structure.  The formula \eqref{eqn:NormSqrFormula} is sometimes called a `norm-square localization' formula (or sometimes `non-abelian localization' formula), because the set of non-trivial contributions are indexed by the components of the critical set of the norm-square of the moment map $\|\Phi_{\M}\|^2$.

We remark that for a non-compact prequantized Hamiltonian $G$-space with proper moment map, the analogue of $\index(\scr{D})$ is not defined in general.  In their proof of the Vergne conjecture, Ma-Zhang \cite{MaZhangVergneConj,MaZhangTransEll} showed that, nevertheless, the right-hand-side of \eqref{eqn:LimAPS0} \emph{is} well-defined.  The resulting `quantization' of $M$ satisfies the $[Q,R]=0$ Theorem and behaves functorially under restriction to subgroups.  Thus one main difference in our setting is that for us, the global object $\index(\scr{D})$ is defined, and \eqref{eqn:LimAPS0} becomes a theorem.  Similar comments apply to the result of Paradan in \cite{ParadanFormalII}, or of Hochs and the second author in \cite{HochsSongSpinc}.  Another difference in our setting is the presence of the Bott-Thom element; this changes little from a conceptual point of view, although it complicates the proofs.

As in the work of Paradan \cite{ParadanRiemannRoch} on compact Hamiltonian $G$-spaces, the norm-square localization formula \eqref{eqn:NormSqrFormula} leads to a new proof of the $[Q,R]=0$ Theorem for Hamiltonian loop group spaces, and we discuss this briefly in Section \ref{ssec:QRrem}.  We do not present a complete proof of the $[Q,R]=0$ theorem here, as a couple of aspects would take us too far from our main focus.  These include an inequality involving the data of the affine Lie algebra $\wh{L\g}$ and a slightly more refined local description of the spin-c structure $S_0$ on $\Y$.  However the $[Q,R]=0$ theorem follows from our main theorem, together with a relatively small part of \cite{YiannisThesis} (or \cite{LMVerlindeQR} in preparation).  Perhaps the most important application of the $[Q,R]=0$ theorem for Hamiltonian loop group spaces is to the Verlinde formula; for context, see for example Bismut-Labourie \cite{BismutLabourie} for a symplectic approach to the Verlinde formulas, or the articles by Meinrenken \cite{LecturesGroupValued, MeinrenkenKHomology} for the relationship with $[Q,R]=0$ for Hamiltonian loop group spaces and quasi-Hamiltonian $G$-spaces. 

The PhD thesis of the first author \cite{YiannisThesis} (and the article \cite{LMVerlindeQR} in preparation) give a very different proof of a version of \eqref{eqn:NormSqrFormula} (the contributions are expressed rather differently), using combinatorial methods similar to Szenes and Vergne \cite{SzenesVergne2010}.  These references also contain some simple examples of \eqref{eqn:NormSqrFormula} for $G=SU(2), SU(3)$.

\vspace{0.3cm}

\noindent \textbf{Acknowledgements.} We thank Eckhard Meinrenken and Nigel Higson for helpful discussions and encouragement.  We thank the referee for their careful reading of the manuscript, and in particular for pointing out how to fix a gap in the proof of Proposition \ref{prop:bravindex} in an earlier draft. Y. Song is supported by NSF grant 1800667.

\vspace{0.3cm}

\noindent \textbf{Notation.}
We often use the summation convention (repeated indices are summed over).  If $V$ is a $\bZ_2$-graded vector bundle, then $V^+$ (resp. $V^-$) denotes the even (resp. odd) graded subbundle.  If $\st{D} \colon C^\infty(M,V) \rightarrow C^\infty(M,V)$ is an odd linear operator, then $\st{D}^{\pm}$ denote the induced maps $C^\infty(M,V^{\pm}) \rightarrow C^\infty(M,V^{\mp})$.  Unless stated otherwise, $[a,b]$ will denote the \emph{graded commutator} of the linear operators $a$, $b$.  We use the Koszul sign rule for graded tensor products.

On a Riemannian manifold $(M,g)$, $g$ will often be used to identify $TM \simeq T^\ast M$.  For a Hermitian vector bundle over $M$, the point-wise inner product (resp. norm) will be denoted $\pair{-}{-}$ (resp. $|-|$), while the inner product (resp. norm) on the space of $L^2$ sections for the Riemannian measure will be denoted $(-,-)_M$ (resp. $\|-\|_M$).  We will drop the subscript $M$ when the underlying manifold is clear from the context. 

If $K$ is a compact Lie group with Lie algebra $\k$, we write $\Irr(K)$ for the set of isomorphism classes of irreducible representations of $K$, and $R(K)$ for the representation ring.  The formal completion $R^{-\infty}(K)=\bZ^{\Irr(K)}$ of $R(K)$ consists of formal infinite linear combinations of irreducible representations $\pi \in \Irr(K)$ with coefficients in $\bZ$.  A sequence of elements in $R^{-\infty}(K)$ converges iff the coefficient of $\pi$ converges, for each $\pi \in \Irr(K)$.  Given a representation $W$ of $K$ and $\pi \in \Irr(K)$, $W_\pi \simeq \pi \otimes \Hom_K(\pi,W)$ denotes the $\pi$-isotypical subspace.  Given a $K$-equivariant linear map $A \colon W \rightarrow V$, $A_\pi$ denotes the map $W_\pi \rightarrow V_\pi$ obtained by restriction.  

If $K$ acts smoothly on a manifold $M$, $E$ is a $K$-equivariant vector bundle, and $\xi \in \k$, then $\L^E_\xi \colon C^\infty(M,E) \rightarrow C^\infty(M,E)$ denotes the differential operator obtained by differentiating the action of $\exp(t\xi)$ on $C^\infty(M,E)$ at $t=0$.  In case $E=M \times \bR$, $\L^E_\xi$ is a vector field on $M$ that we denote $\xi_M$.

Throughout $G$ will denote a fixed compact connected Lie group with Lie algebra $\g$.  Fix a choice of maximal torus $T$ with Lie algebra $\t$.  The normalizer of $T$ in $G$ is denoted $N_G(T)$, and the Weyl group $W=N_G(T)/T$.  The integral lattice is $\Lambda=\ker(\exp \colon \t \rightarrow T)$ and its dual $\Lambda^\ast=\Hom(\Lambda,\bZ)\simeq \Irr(T)$ is the (real) weight lattice.  Fix an invariant inner product $B$ on $\g$, which we use to identify $\g \simeq \g^\ast$. 

\section{Preliminaries on Dirac-type operators}\label{sec:PrelimDirac}
In this section we briefly recall some definitions and results on elliptic boundary value problems, mostly following \cite{BarBallmannGuide,BarBallmann}.  We also recall criteria for determining that a Dirac-type operator on a non-compact manifold is ($K$-)Fredholm, or has compact resolvent.

\subsection{Elliptic boundary value problems.}\label{sec:EllBd}
Let $M$ be a complete Riemannian manifold with compact boundary and interior unit normal vector $\nu$ along $\partial M$.  Let $E$, $F$ be Hermitian vector bundles, and let $\scr{D} \colon C^\infty(M,E) \rightarrow C^\infty(M,F)$ be a first-order differential operator.  The \emph{symbol} of $\scr{D}$ is the bundle map $\sigma_{\scr{D}} \colon T^\ast M\simeq TM \rightarrow \Hom(E,F)$ defined by\footnote{It is common to include a factor of $\sqrt{-1}$ in the definition.  We are following the convention in \cite{BarBallmannGuide}.}
\[ \sigma_{\scr{D}}(df)=[\scr{D},f].\]

Following B{\"a}r and Ballmann \cite[Section 2.1]{BarBallmannGuide}, we will say that $\scr{D}$ is of \emph{Dirac type} if its principal symbol satisfies the Clifford relations
\begin{align}
\label{eqn:cliffrel}
\sigma_{\scr{D}}(\xi)^\ast \sigma_{\scr{D}}(\eta)+\sigma_{\scr{D}}(\eta)^\ast \sigma_{\scr{D}}(\xi)&=2\pair{\xi}{\eta}\id_{E}\\
\nonumber
\sigma_{\scr{D}}(\xi) \sigma_{\scr{D}}(\eta)^\ast+\sigma_{\scr{D}}(\eta) \sigma_{\scr{D}}(\xi)^\ast &=2\pair{\xi}{\eta}\id_{F}
\end{align}
for all $\xi, \eta \in T^\ast M$.  This definition implies $\scr{D}$ is elliptic, and moreover that the composition 
\begin{equation} 
\label{eqn:AdaptedSymbol}
\sigma_{\scr{D}}(\nu)^{-1}\sigma_{\scr{D}}(\xi) \in \End(E|_{\partial M}), \qquad \xi \in T^\ast(\partial M)
\end{equation}
is skew-Hermitian (here $T^\ast(\partial M)$ is identified with the annihilator of $\nu$).  A first-order essentially self-adjoint differential operator $A \colon C^\infty(\partial M, E) \rightarrow C^\infty(\partial M, E)$ with principal symbol \eqref{eqn:AdaptedSymbol} is called an \emph{adapted boundary operator} for $\scr{D}$.  Since $\partial M$ is compact and $A$ is elliptic, $A$ has compact resolvent and there is an orthonormal basis of $L^2(\partial M,E)$ consisting of smooth eigensections of $A$.

Let $C^\infty_{cc}(M,E)$ denote the space of smooth sections of $E$ with compact support in the interior of $M$.  The \emph{formal adjoint} of $\scr{D}$ is the unique first-order differential operator $\scr{D}^\ast \colon C^\infty(M,F)\rightarrow C^\infty(M,E)$ such that
\[ (\scr{D}w,v)_{L^2(M,F)}=(w,\scr{D}^\ast v)_{L^2(M,E)} \]
for all $w \in C^\infty_{cc}(M,E)$, $v \in C^\infty_{cc}(M,F)$.  Then $\sigma_{\scr{D}^\ast}(\xi)=-\sigma_{\scr{D}}(\xi)^\ast$.  For compactly supported sections $w \in C^\infty_c(M,E)$, $v \in C^\infty_c(M,F)$ (which may be non-zero on the boundary), integration by parts gives \emph{Green's formula}:
\begin{equation}
\label{eqn:Green}
(\scr{D}w,v)_{L^2(M,F)}=(w,\scr{D}^\ast v)_{L^2(M,E)}-(\sigma_{\scr{D}}(\nu)w,v)_{L^2(\partial M,F)}.
\end{equation}

The \emph{minimal extension} $\scr{D}_{\tn{min}}$ is the closure (in the sense of unbounded operators on Hilbert spaces) of the operator with domain $C^\infty_{cc}(M,E)$.  The \emph{maximal extension} $\scr{D}_{\tn{max}}$ has domain the space of $L^2$-sections $e \in L^2(M,E)$ such that there exists a section $f \in L^2(M,F)$ with $\scr{D}e=f$ in the sense of distributions ($C^\infty_{cc}(M,F)$ being the space of test sections).  

Let $L^2_{\tn{loc}}(M,E)$ denote the space of sections of $E$ that are locally square-integrable (modulo sections vanishing almost everywhere), and let $H^1_{\st{loc}}(M,E)$ denote the space of sections $e \in L^2_{\tn{loc}}(M,E)$ whose weak first covariant derivatives lie in $L^2_{\tn{loc}}(M,E)$ as well.  If $M$ is compact then $L^2_{\tn{loc}}(M,E)=L^2(M,E)$ and $H^1_{\tn{loc}}(M,E)=H^1(M,E)$ is the usual Sobolev space.  It is convenient to introduce the smaller domain 
\[ H^1_{\scr{D}}(M,E)=H^1_{\st{loc}}(M,E)\cap \dom(\scr{D}_{\tn{max}}).\]
On a manifold with non-empty boundary $\dom(\scr{D}_{\tn{min}}) \subsetneq H^1_{\scr{D}}(M,E) \subsetneq \dom(\scr{D}_{\tn{max}})$, see B{\"a}r and Ballmann \cite{BarBallmann} for a precise characterization and detailed discussion.  By imposing suitable boundary conditions one obtains extensions in between the minimal and maximal extensions.  

In this article we will only need \emph{Atiyah-Patodi-Singer boundary conditions} \cite{AtiyahPatodiSingerI}, which are defined as follows.  Let 
\[ \scr{R}_{\partial M} \colon H^1_{\tn{loc}}(M,E) \rightarrow H^{1/2}_{\tn{loc}}(\partial M,E) \]
denote the \emph{trace map}, the continuous extension of the map given on smooth sections by restriction to the boundary.  Given an adapted boundary operator $A$, let $B_{<0}(A) \subset H^{1/2}(\partial M,E)$ denote the closure of the subspace generated by the negative eigenspaces of $A$.  The Atiyah-Patodi-Singer (APS) boundary value problem $(\scr{D},B_{<0}(A))$ is the extension of $\scr{D}$ with domain
\[ \dom(\scr{D},B_{<0}(A))=\{v \in H^1_{\scr{D}}(M,E)|\scr{R}_{\partial M}v \in B_{<0}(A)\}.\]
A short calculation using \eqref{eqn:cliffrel}, \eqref{eqn:AdaptedSymbol} shows that
\[ A^\vee=-\sigma_{\scr{D}}(\nu)\circ A \circ \sigma_{\scr{D}}(\nu)^{-1} \colon C^\infty(\partial M,F) \rightarrow C^\infty(\partial M,F),\]
is an adapted boundary operator for $\scr{D}^\ast$.  Let $B_{\le 0}(A^\vee) \subset H^{1/2}(\partial M,F)$ denote the closure of the subspace generated by the non-positive eigenspaces for $A^\vee$.  Using Green's formula, the Hilbert space adjoint of the operator $(\scr{D},B_{<0}(A))$ is the extension $(\scr{D}^\ast,B_{\le 0}(A^\vee))$ of $\scr{D}^\ast$ with domain
\[ \dom(\scr{D}^\ast,B_{\le 0}(A^\vee))=\{v \in H^1_{\scr{D}^\ast}(M,F)|\scr{R}_{\partial M}v \in B_{\le 0}(A^\vee) \}.\]

\subsection{Fredholm conditions and the splitting theorem.}
In this section we continue to assume that $\scr{D}$ is a Dirac-type operator on a complete Riemannian manifold with compact boundary.  Following B{\"a}r and Ballmann, we say that $\scr{D}$ is \emph{coercive at infinity} if there is a compact subset $S \subset M$ and constant $c>0$ such that
\begin{equation} 
\label{eqn:Coercive}
c\|v\|_{L^2(M,E)} \le \|\scr{D}v\|_{L^2(M,F)}
\end{equation}
for all $v\in C^\infty_c(M \setminus S,E)$.  More generally, suppose a compact Lie group $K$ acts on $M$, $E$, $F$ preserving the metrics and $\scr{D}$ is $K$-equivariant.  For $\pi \in \Irr(K)$, we say that $\scr{D}$ is $(K,\pi)$-\emph{coercive at infinity} if $\scr{D}_\pi$ is coercive, i.e. if there is a compact $S_\pi \subset M$ and constant $c_\pi$ such that \eqref{eqn:Coercive} holds for $v \in C^\infty_c(M\setminus S_\pi,E)_\pi$.  We say that $\scr{D}$ is $K$-\emph{coercive} if $\scr{D}$ is $(K,\pi)$-coercive for each $\pi \in \Irr(K)$.

A $K$-equivariant bounded linear operator $A \colon H \rightarrow H^\prime$ is $K$-\emph{Fredholm} if the operator $A_\pi \colon H_\pi \rightarrow H^\prime_\pi$ is Fredholm for each $\pi \in \Irr(K)$.  Such an operator has a $K$-\emph{index} in $R^{-\infty}(K)$, defined as
\[ \index(A)=\sum_{\pi \in \Irr(K)} \index(A_\pi)\pi.\]
\begin{remark}
\label{rem:Z2Graded}
We will also use the notation $\index(-)$ in closely related situations, and it should be clear from the context which interpretation is being used.  If $H$ is $\bZ_2$-graded and $A$ is an odd, possibly unbounded self-adjoint Fredholm operator, then $\index(A)$ denotes the index of $A^+$ viewed as a bounded Fredholm operator from $\dom(A^+)$ equipped with the graph norm, to $H$.  This coincides with $\dim(\ker(A^+))-\dim(\ker (A^-))$, the `graded dimension' of $\ker(A)$. 
\end{remark}

In the non-equivariant case, the following result is Theorem 8.5 in \cite{BarBallmann}, and the proof given there generalizes immediately to the equivariant case.
\begin{theorem}
Let $M$ be a complete Riemannian manifold with compact boundary and $\scr{D} \colon C^\infty(M,E) \rightarrow C^\infty(M,F)$ a Dirac-type operator.  Let $K$ be a compact Lie group acting on $M$, $E$, $F$ preserving the metrics.  Suppose $\scr{D}$ is $K$-equivariant and $A$ is a $K$-equivariant adapted boundary operator.  If $\scr{D}$, $\scr{D}^\ast$ are $K$-coercive then $(\scr{D},B_{<0}(A))$ is a $K$-Fredholm operator.
\end{theorem}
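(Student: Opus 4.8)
The plan is to reduce the statement to the non-equivariant result, Theorem 8.5 of \cite{BarBallmann}, applied one isotypical component at a time, and to note that the argument there is compatible with the group action. Since $K$ preserves all the metrics and $\scr{D}$, $A$ are $K$-equivariant, each of the Hilbert spaces $L^2(M,E)$, $L^2(M,F)$, the subspace $H^1_{\scr{D}}(M,E)$, the trace map $\scr{R}_{\partial M}$, the adapted operators $A$ and $A^\vee$, the spectral subspaces $B_{<0}(A)$ and $B_{\le 0}(A^\vee)$, and the domains of the APS problems for $\scr{D}$ and $\scr{D}^\ast$ are $K$-invariant and split as orthogonal direct sums of their $\pi$-isotypical pieces ($\pi \in \Irr(K)$). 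It therefore suffices to fix $\pi$ and show that $(\scr{D},B_{<0}(A))_\pi$ is a Fredholm operator; the collection of the $\index\bigl((\scr{D},B_{<0}(A))_\pi\bigr)$ is then the desired element of $R^{-\infty}(K)$.

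First I would establish the basic a priori estimate on the $\pi$-component. Interior elliptic regularity for the Dirac-type operator $\scr{D}$, together with the near-boundary elliptic estimate on a collar of $\partial M$ --- available precisely because $B_{<0}(A)$ is an (APS-type, hence elliptic) boundary condition --- yields, for any compact $S' \subset M$ and a slightly larger compact $S'' \supset S'$, an estimate $\|v\|_{H^1(S')} \le C\bigl(\|\scr{D}v\|_{L^2(M,F)}+\|v\|_{L^2(S'',E)}\bigr)$ for $v \in \dom(\scr{D},B_{<0}(A))$. Taking $S'$ to contain the compact set $S_\pi$ furnished by $(K,\pi)$-coercivity and applying \eqref{eqn:Coercive} to a cutoff $\chi v$, where $\chi \equiv 1$ outside $S''$ and $\chi$ vanishes on a neighbourhood of $S_\pi$ (so that $\scr{D}(\chi v)=\chi\scr{D}v+[\scr{D},\chi]v$ with $[\scr{D},\chi]v$ supported in $S''$), I obtain a compact set $\ti{S}_\pi \subset M$ and a constant such that
\[ \|v\|_{L^2(M,E)}+\|\scr{D}v\|_{L^2(M,F)} \le C_\pi\bigl(\|\scr{D}v\|_{L^2(M,F)}+\|v\|_{L^2(\ti{S}_\pi,E)}\bigr) \]
for all $v$ in the $\pi$-component of $\dom(\scr{D},B_{<0}(A))$. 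Since these sections lie in $H^1_{\tn{loc}}(M,E)$, restriction to the compact set $\ti{S}_\pi$ followed by the Rellich embedding $H^1(\ti{S}_\pi)\hookrightarrow L^2(\ti{S}_\pi)$ is a compact operator from the domain (with its graph norm) into $L^2(\ti{S}_\pi,E)$, and it carries the $\pi$-component to the $\pi$-component because the inclusion $H^1\hookrightarrow L^2$ is $K$-equivariant and $L^2_\pi$ is a closed $K$-invariant subspace. The standard fact that an operator $T$ with $\|x\|\le C(\|Tx\|+\|Kx\|)$ for some compact $K$ has closed range and finite-dimensional kernel now shows that $(\scr{D},B_{<0}(A))_\pi$ has closed range and finite-dimensional kernel.

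To handle the cokernel I would invoke the computation from Section \ref{sec:EllBd}: the Hilbert-space adjoint of $(\scr{D},B_{<0}(A))$ is $(\scr{D}^\ast,B_{\le 0}(A^\vee))$, whose domain is again $K$-invariant, so on the $\pi$-component the adjoint is $(\scr{D}^\ast,B_{\le 0}(A^\vee))_\pi$. Now $\scr{D}^\ast$ is itself a Dirac-type operator, it is $(K,\pi)$-coercive at infinity by hypothesis, $A^\vee$ is $K$-equivariant, and $B_{\le 0}(A^\vee)$ is again an APS-type elliptic boundary condition; hence the argument of the previous paragraph applies verbatim and shows that $\ker\bigl((\scr{D}^\ast,B_{\le 0}(A^\vee))_\pi\bigr)$ is finite-dimensional. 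Consequently $\coker\bigl((\scr{D},B_{<0}(A))_\pi\bigr)$ is finite-dimensional, so $(\scr{D},B_{<0}(A))_\pi$ is Fredholm; letting $\pi$ vary over $\Irr(K)$ gives that $(\scr{D},B_{<0}(A))$ is $K$-Fredholm.

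I do not expect a genuine obstacle: the analytic substance --- the interior and APS near-boundary elliptic estimates, the cutoff argument turning coercivity at infinity into the global estimate above (including the density argument extending \eqref{eqn:Coercive} from compactly supported sections to $\chi v$, which uses completeness of $M$), and the closed-graph/Rellich reasoning --- is exactly the content of \cite[\S 8]{BarBallmann}, which I would cite rather than reproduce. The only thing to check for the equivariant upgrade is that the whole construction is compatible with the isotypical decomposition, and this is immediate once one observes that all operators and function spaces in sight are $K$-equivariant and that Rellich compactness persists after restriction to the closed subspaces $L^2_\pi$. The closest thing to a delicate point is that the adapted operator $A$ (hence $A^\vee$) and the spectral subspaces $B_{<0}(A)$, $B_{\le 0}(A^\vee)$ should be $K$-equivariant; but $K$-equivariance of $A$ is part of the hypotheses, and $K$-invariance of the spectral subspaces follows since the spectral projections of a $K$-equivariant self-adjoint operator commute with the $K$-action.
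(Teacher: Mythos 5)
Your proposal is correct and follows the same route as the paper, which simply observes that the statement is Theorem 8.5 of \cite{BarBallmann} in the non-equivariant case and that the proof there generalizes immediately to the equivariant setting by working one isotypical component at a time. Your additional details (the $K$-invariance of the domains and spectral subspaces, the cutoff/Rellich argument, and the passage to the adjoint $(\scr{D}^\ast,B_{\le 0}(A^\vee))$ for the cokernel) are exactly the content being invoked, with the only minor point worth recording being that the cutoff function should be chosen $K$-invariant (e.g.\ by averaging) so that multiplication by it preserves the $\pi$-isotypical subspace.
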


We are now ready to state the splitting theorem for the special case of APS boundary conditions.  In the non-equivariant case the statement below is Theorem 8.17 in \cite{BarBallmann}, and the proof given there generalizes easily to the equivariant case.  Splitting theorems of this general kind appear in many places in the literature. For but one other example (which allows even for families of operators), see Dai-Zhang \cite{dai1996splitting}.
\begin{theorem}
\label{thm:Split}
Let $M$ be a complete Riemannian manifold without boundary, and $\scr{D} \colon C^\infty(M,E) \rightarrow C^\infty(M,F)$ a Dirac-type operator.  Let $K$ be a compact Lie group acting on $M$, $E$, $F$ preserving the metrics, and assume $\scr{D}$, $\scr{D}^\ast$ are $K$-coercive.  Let $N \subset M$ be a compact $K$-invariant hypersurface with oriented normal bundle.  Cut $M$ along $N$ to obtain a manifold $M^\prime$ with $\partial M^\prime=N_1 \sqcup N_2$, where $N_1$, $N_2$ are two copies of $N$.  Let $\scr{D}^\prime$ denote the induced Dirac-type operator on $M^\prime$, acting between sections of the pullback bundles $E^\prime$, $F^\prime$.  Let $A$ be a $K$-equivariant adapted boundary operator for $\scr{D}^\prime$ along $N_1$; then $A \sqcup (-A)$ is an adapted boundary operator for $\scr{D}^\prime$.  Then we have the following equality in $R^{-\infty}(K)$:
\begin{equation} 
\label{eqn:SplitTheorem}
\index(\scr{D})=\index(\scr{D}^\prime,B_{<0}(A)\sqcup B_{\le 0}(-A)).
\end{equation}
\end{theorem}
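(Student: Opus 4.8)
The plan is to reduce the statement to the non-equivariant splitting theorem \cite[Thm.~8.17]{BarBallmann}, the compact group action being merely carried along.  Since the $\pi$-isotypical subspaces of $L^2(M,E)$, $L^2(M,F)$ and of the corresponding local Sobolev and maximal-domain spaces are closed and preserved by $\scr{D}$, $\scr{D}^\ast$, $\scr{D}'$ and $(\scr{D}')^\ast$, and since by hypothesis $\scr{D}_\pi$ and $\scr{D}^\ast_\pi$ are coercive for every $\pi\in\Irr(K)$, it suffices to fix $\pi$ and prove the corresponding equality of \emph{ordinary} Fredholm indices; summing over $\pi$ then yields the identity in $R^{-\infty}(K)$.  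So from here on one argues $\pi$-isotypically, which changes nothing except that scalars are replaced by the finite-dimensional multiplicity spaces $\Hom_K(\pi,-)$, and every estimate, regularity result and trace theorem used below applies verbatim.  On the complete boundaryless manifold $M$ one has $\scr{D}_{\tn{min}}=\scr{D}_{\tn{max}}$, and coercivity of $\scr{D}$, $\scr{D}^\ast$ makes this unique closed extension Fredholm with $\coker(\scr{D})=\ker(\scr{D}^\ast_{\tn{max}})$.  Enlarging the coercivity region of $\scr{D}$, $\scr{D}^\ast$ so that it contains a collar of $N$, and using $M'\setminus(\text{collar of }N)\cong M\setminus N$, the operators $\scr{D}'$, $(\scr{D}')^\ast$ are again coercive, so by the preceding theorem $(\scr{D}',B_{<0}(A)\sqcup B_{\le 0}(-A))$ is Fredholm; thus both indices in \eqref{eqn:SplitTheorem} are defined.

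First I would re-express the left-hand side as an index on $M'$.  Let $\rho\colon L^2(M,E)\xrightarrow{\ \sim\ }L^2(M',E')$ be the unitary restriction isomorphism, which formally intertwines $\scr{D}$ and $\scr{D}'$.  If $v\in\dom(\scr{D}_{\tn{max}})$ then $\scr{D}v\in L^2$, so local elliptic regularity of the elliptic operator $\scr{D}$ near the interior hypersurface $N\subset M$ gives $v\in H^1_{\tn{loc}}$ on a collar of $N$; hence $\rho(v)\in H^1_{\tn{loc}}(M')$ with matching boundary traces $\scr{R}_{N_1}\rho(v)=\scr{R}_{N_2}\rho(v)$ in $H^{1/2}(N)$.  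Conversely, two $H^1$-sections on the two collars with equal traces along $N$ glue to an $H^1_{\tn{loc}}$-section on $M$, across which $\scr{D}$ acquires no distributional term.  Thus $\rho$ restricts to an isomorphism intertwining $\scr{D}$ with $(\scr{D}',\mathcal{T})$, where $\mathcal{T}$ is the \emph{transmission boundary condition}, with domain $\{w\in H^1_{\scr{D}'}(M',E')\mid\scr{R}_{N_1}w=\scr{R}_{N_2}w\}$; this is an elliptic boundary condition for $\scr{D}'$ in the sense of \cite{BarBallmann}, and $\index(\scr{D})=\index(\scr{D}',\mathcal{T})$.

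Next I would compare the boundary conditions $\mathcal{T}$ and $B:=B_{<0}(A)\sqcup B_{\le 0}(-A)$ for the \emph{same} operator $\scr{D}'$.  Writing $B_{\le 0}(-A)=B_{\ge 0}(A)$ and decomposing along the spectrum of $A$, these two closed subspaces of the boundary space $H^{1/2}(\partial M')$ (more precisely, of the hybrid Sobolev space of \cite{BarBallmann}) are \emph{complementary}: an element of $\mathcal{T}\cap B$ has boundary value in $B_{<0}(A)\cap B_{\ge 0}(A)=0$, while for any $(a,b)$ the choice $u=a_{\ge 0}+b_{<0}$ gives $(a,b)-(u,u)\in B$, so $\mathcal{T}+B$ is everything.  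By the Agranovich--Dynin-type relative index formula for elliptic boundary conditions --- whose non-compact version is part of the circle of results of B{\"a}r and Ballmann \cite{BarBallmann,BarBallmannGuide} --- $\index(\scr{D}',\mathcal{T})-\index(\scr{D}',B)$ equals the index of the Fredholm pair $(\mathcal{T},B)$, which vanishes since the pair is complementary.  Combined with the previous paragraph this gives $\index(\scr{D})=\index(\scr{D}',B)$, which is \eqref{eqn:SplitTheorem}.  (It is precisely the complementarity that dictates the mixed choice of $B_{<0}(A)$ on $N_1$ and $B_{\le 0}(-A)$ on $N_2$: with APS conditions on both copies of $N$ the same computation would instead produce the familiar correction term $\dim\ker A$.)

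The one genuinely substantial ingredient is the relative index formula invoked in the last step, together with the claim that $\mathcal{T}$ and $B$ really are closed subspaces of a single ambient boundary space forming a Fredholm pair: on the \emph{non-compact} manifold $M'$ this rests on the trace theory and the hybrid Sobolev spaces $\check H(A)$ of \cite{BarBallmann}, which is the analytic heart of the matter.  Granting that framework, the argument is essentially formal, and --- crucially --- every space and estimate occurring in it is $K$-equivariant and decomposes over $\Irr(K)$, so no new idea is required to pass from the non-equivariant splitting theorem to the equivariant one.  An equivalent bookkeeping, closer to \cite{BarBallmann}, is to identify $\ker$ and $\coker$ on each side --- via Green's formula \eqref{eqn:Green} --- with the intersections of the Cauchy-data spaces of $\scr{D}'$ and $(\scr{D}')^\ast$ with $\mathcal{T}$, respectively with $B$, and then appeal to the same complementarity computation.
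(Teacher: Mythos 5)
Your proposal is correct in substance and lies squarely within the approach the paper intends: the paper offers no argument of its own beyond citing B\"ar--Ballmann \cite[Theorem~8.17]{BarBallmann} and remarking that the proof generalizes easily to the equivariant case, and your isotypical reduction is exactly that remark made precise (with the small caveat that $L^2(M,E)_\pi$ is not the section space of a vector bundle, so the honest formulation is that every step of the B\"ar--Ballmann argument is $K$-equivariant and therefore restricts to isotypical components, rather than quoting the non-equivariant theorems verbatim for $\scr{D}_\pi$). Your identification $\index(\scr{D})=\index(\scr{D}',\mathcal{T})$ via the transmission condition is also how the cited proof begins; the transmission condition is treated as an elliptic boundary condition in \cite{BarBallmann,BarBallmannGuide}. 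Where you genuinely deviate is the final step: B\"ar--Ballmann pass from $\mathcal{T}$ to $B:=B_{<0}(A)\sqcup B_{\le 0}(-A)=B_{<0}(A)\oplus B_{\ge 0}(A)$ by deforming the diagonal $\{(u,u)\}$ to the split condition through a family of graph-type elliptic boundary conditions and invoking stability of the index under such deformations, whereas you invoke an Agranovich--Dynin-type relative index formula. Your conclusion is right, but the formula as you state it is not: the difference $\index(\scr{D}',B_1)-\index(\scr{D}',B_2)$ is \emph{not} the index of the Fredholm pair $(B_1,B_2)$ --- for $B_1=B_{<0}(A)$ and $B_2=B_{\le 0}(A)$ that pair is not even Fredholm, while the true difference is $-\dim \ker A$. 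The correct comparison is, for instance, the index of the projection $\pi_{B_2}|_{B_1}\colon B_1\to B_2$, equivalently the Fredholm-pair index of $(B_1,B_2^{\perp})$. In your situation this imprecision is harmless: $\mathcal{T}$ is complementary both to $B$ and to $B^{\perp}=B_{\ge 0}(A)\oplus B_{<0}(A)$, so every correct version of the relative index vanishes, and your computation (taking $w=a_{\ge 0}+b_{<0}$, etc.) is the essential content either way --- indeed it is exactly what makes the graph homotopy above stay within elliptic boundary conditions. So: correct, same skeleton as the proof the paper defers to, with a last step that should either cite the relative index formula in its correct form or be replaced by the deformation argument of \cite{BarBallmann}.
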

\begin{remark}
In \eqref{eqn:SplitTheorem}, $(\scr{D}^\prime,B_{<0}(A)\sqcup B_{\le 0}(-A))$ denotes the extension of $\scr{D}^\prime$ with domain
\[ \{v \in H^1_{\scr{D}^\prime}(M^\prime,E^\prime)|\scr{R}_{N_1}v \in B_{<0}(A), \scr{R}_{N_2}v \in B_{\le 0}(-A) \}.\]
If the hypersurface $N$ is such that $N_1$, $N_2$ are contained in distinct components of $M^\prime$, then the right hand side of \eqref{eqn:SplitTheorem} becomes the sum of two indices on the two components of $M^\prime$.
\end{remark}

The following result on the discreteness of the spectrum for Schr{\"o}dinger-type operators is well-known, cf. \cite{ShubinSchrodinger, KucerovskyCallias}.  It is also closely related to the property of being `$\kappa$-coercive' for all $\kappa>0$ in \cite[Corollary 5.6]{BarBallmannGuide}.  We described the proof of a slightly more general result in \cite[Appendix B]{LSQuantLG}.
\begin{proposition}
\label{prop:DiscreteSpectrum}
Let $M$ be a complete Riemannian manifold without boundary.  Let $\st{D}$ be an essentially self-adjoint Dirac-type operator acting on sections of a Hermitian vector bundle $E$.  Let $V$ be a continuous function which is proper and bounded below.  Then $\st{D}^2+V$ is essentially self-adjoint with discrete spectrum.
\end{proposition}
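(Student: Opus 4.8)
The strategy is the classical Rellich-type argument: essential self-adjointness follows from the Dirac-type hypothesis on a complete manifold (this is well known and is the Gaffney/Chernoff argument via finite propagation speed; since $\st{D}$ is Dirac-type and $M$ is complete, $\st{D}$ and hence $\st{D}^2+V$ with $V$ bounded below is essentially self-adjoint on $C^\infty_{cc}(M,E)$). For the discreteness of the spectrum, the plan is to show that the resolvent $(\st{D}^2+V+C)^{-1}$ is compact for $C$ large enough that $V+C\ge 1$, equivalently that the form domain $\{e\in L^2(M,E): \|\st{D}e\|^2+\int V|e|^2<\infty\}$ embeds compactly into $L^2(M,E)$. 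The key quantitative input is the Gårding-type inequality: since $\st{D}$ is Dirac type, the Weitzenböck-type estimate gives $\|\nabla e\|^2 \le \|\st{D}e\|^2 + \text{(lower order)}\cdot\|e\|^2$ locally, so control of $\|\st{D}e\|^2 + \int(V+C)|e|^2$ controls the local $H^1$-norm of $e$ together with a weight that grows at infinity because $V$ is proper.

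The main steps, in order, are as follows. First, fix $C>0$ with $W:=V+C\ge 1$ everywhere; it suffices to prove $\st{D}^2+W$ has compact resolvent. Second, take a sequence $(e_j)$ in the form domain with $\|e_j\|=1$ and $\|\st{D}e_j\|^2+\int W|e_j|^2$ bounded; we must extract an $L^2$-convergent subsequence. Third, on any relatively compact open $U\subset M$ the bound on $\|\st{D}e_j\|$ together with local elliptic estimates (interior Gårding inequality for the Dirac-type operator $\st{D}$) gives a uniform $H^1(U)$-bound, so by Rellich--Kondrachov and a diagonal argument over an exhaustion of $M$ by relatively compact opens we may assume $e_j$ converges in $L^2_{\tn{loc}}$. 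Fourth — this is the decisive tail estimate — for any $\varepsilon>0$ properness of $V$ lets us choose a compact $S_\varepsilon\subset M$ with $W\ge 1/\varepsilon$ outside $S_\varepsilon$; then $\int_{M\setminus S_\varepsilon}|e_j|^2 \le \varepsilon\int_{M\setminus S_\varepsilon} W|e_j|^2 \le \varepsilon\,(\text{const})$, uniformly in $j$. Combining the uniform smallness of the tails with $L^2_{\tn{loc}}$-convergence on the compact core shows $(e_j)$ is Cauchy in $L^2(M,E)$, proving the embedding is compact; hence the spectrum is discrete.

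The step I expect to be the genuine obstacle is step three, the passage from the $L^2$-bound on $\st{D}e_j$ to a \emph{uniform} local $H^1$-bound: this requires the interior elliptic estimate $\|e\|_{H^1(U)}\le c_{U,U'}\big(\|\st{D}e\|_{L^2(U')}+\|e\|_{L^2(U')}\big)$ for relatively compact $U\Subset U'$, which is exactly where the Dirac-type (hence elliptic, with the Clifford relations \eqref{eqn:cliffrel} controlling the principal symbol) hypothesis enters; the constants $c_{U,U'}$ are local and harmless since we only ever work on fixed relatively compact sets. Everything else is soft: completeness gives essential self-adjointness, properness gives the tail estimate, and Rellich gives local compactness. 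As noted in the statement, a slightly more general version of this argument is carried out in \cite[Appendix~B]{LSQuantLG}, and it is also essentially \cite[Corollary~5.6]{BarBallmannGuide} in the guise of $\kappa$-coercivity for all $\kappa>0$, so one could alternatively just cite those and indicate the minor modifications.
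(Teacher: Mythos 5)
Your proof is correct and is essentially the argument the paper relies on: in the paper this proposition carries no written proof at all, being labelled well-known with citations to \cite{ShubinSchrodinger,KucerovskyCallias}, to \cite[Corollary 5.6]{BarBallmannGuide}, and to \cite[Appendix B]{LSQuantLG} where a slightly more general statement is proved, and your Rellich-type scheme (local interior elliptic estimate plus Rellich--Kondrachov for $L^2_{\tn{loc}}$ convergence, tail estimate from properness of $V$, hence compact form-domain embedding and compact resolvent) is exactly the standard argument behind those references. The only compression is the clause ``$\st{D}$ and hence $\st{D}^2+V$ \dots is essentially self-adjoint'': essential self-adjointness of the Schr\"odinger-type operator $\st{D}^2+V$ does not follow formally from that of $\st{D}$, but the completeness/cut-off (Gaffney--Chernoff) argument you gesture at is the standard way to obtain it for $V$ continuous and bounded below and is covered by the same citations, so this is looseness of wording rather than a gap.
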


For certain arguments later on it will be convenient to work with inequalities between semi-bounded operators.  If $A$ is a self-adjoint operator on a Hilbert space $H$ with domain $\dom(A)$ and spectrum in $[1,\infty)$, then one defines an associated positive definite quadratic form
\[ q_A(u_1,u_2)=(Au_1,u_2)\]
for all $u_1,u_2 \in \dom(A)$.  The completion of $\dom(A)$ using the inner product $q_A$ is a Hilbert space $\dom(q_A)$ which can be identified with $\dom(A^{1/2})$, and is known as the \emph{form domain} of $A$ (cf. \cite[VIII.6]{ReedSimonI}).  Given self-adjoint operators $A,B$ with spectrum in $[1,\infty)$ one writes
\[ A \ge B\]
if
\[ \dom(q_A) \subset \dom(q_B) \quad \text{and} \quad q_A(v,v)\ge q_B(v,v) \quad \forall v \in \dom(q_A)\]
(cf. \cite[XIII.2, p.85]{ReedSimonIV}).  Equivalently, $A \ge B$ if the inclusion mapping
\[ (\dom(q_A),q_A) \hookrightarrow (\dom(q_B),q_B)\]
is norm-decreasing.  It is enough to check that for each $v$ in a core for $A$, $v \in \dom(q_B)$ and $q_A(v,v)\ge q_B(v,v)$.  More generally if $A,B$ are self-adjoint operators with spectrum in $[-c,\infty)$ for some $c \ge 0$ then one writes $A \ge B$ if $A+c+1 \ge B+c+1$.

The following result is a consequence of Proposition \ref{prop:DiscreteSpectrum}, cf. \cite[Appendix B]{LSQuantLG} for details.
\begin{proposition}
\label{prop:CompactnessBoundedBelow}
Let $M$, $E$, $\st{D}$, $V$ be as in Proposition \ref{prop:DiscreteSpectrum}.  Let $K$ be a compact Lie group acting on $M$, $E$ preserving the metrics, and assume $\st{D}$, $V$ are $K$-equivariant.  Let $T$ be a $K$-equivariant self-adjoint operator on $L^2(M,E)$ with spectrum in $(0,\infty)$, and suppose for some $\pi \in \Irr(K)$ we have $T_\pi \ge (\st{D}^2+V)_\pi$.  Then $T_\pi^{-1}$ is a compact operator.
\end{proposition}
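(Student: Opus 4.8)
The plan is to normalize so that every operator in sight has spectrum bounded below by $1$, and then reduce the statement to three elementary facts about self-adjoint operators. Since $V$ is bounded below, fix $c\ge 0$ with $\st{D}^2+V\ge -c$. By the convention introduced just before the Proposition, the hypothesis $T_\pi \ge (\st{D}^2+V)_\pi$ then unwinds to the form inequality $A\ge B$, where $A:=T_\pi+c+1$ has spectrum in $[c+1,\infty)\subseteq[1,\infty)$ (because the spectrum of $T_\pi$ lies in $(0,\infty)$) and $B:=(\st{D}^2+V)_\pi+c+1$ has spectrum in $[1,\infty)$. It suffices to show that $A^{-1}$ is compact: for then $T_\pi+c+1$, and hence $T_\pi$, has discrete spectrum; a discrete subset of $(0,\infty)$ is bounded away from $0$, so $T_\pi^{-1}$ is bounded, and its spectrum accumulates only at $0$, whence $T_\pi^{-1}$ is compact.

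First I would show that $B^{-1}$ is compact. By Proposition \ref{prop:DiscreteSpectrum}, $\st{D}^2+V$ is essentially self-adjoint with discrete spectrum, so $L^2(M,E)$ decomposes as an orthogonal Hilbert sum of finite-dimensional eigenspaces, one for each eigenvalue $\lambda_n$, with $\lambda_n\to\infty$. Because $\st{D}$ and $V$ are $K$-equivariant, the unitary $K$-action commutes with the closure of $\st{D}^2+V$, so each eigenspace is a finite-dimensional $K$-representation, and the $\pi$-isotypical subspace $L^2(M,E)_\pi$ is a reducing subspace, namely the Hilbert sum of the (finite-dimensional) $\pi$-isotypical parts of the eigenspaces. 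Thus $(\st{D}^2+V)_\pi$ is self-adjoint on $L^2(M,E)_\pi$ with discrete spectrum contained in $\{\lambda_n\}$; being bounded below, $B$ has spectrum in $[1,\infty)$ accumulating only at $\infty$, and therefore $B^{-1}$ is compact.

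Next I would pass from the form inequality $A\ge B$ to the bounded-operator inequality $0\le A^{-1}\le B^{-1}$, using the variational identity
\[ (A^{-1}v,v)=\sup_{u\in\dom(q_A)}\bigl(2\,\tn{Re}(u,v)-q_A(u,u)\bigr), \]
valid for every $v\in L^2(M,E)_\pi$ since $A\ge 1$ (the supremum is attained at $u=A^{-1}v\in\dom(A)\subseteq\dom(q_A)$, the deficit at a general $u$ being $q_A(u-A^{-1}v,\,u-A^{-1}v)\ge 0$). Since $A\ge B$ gives $\dom(q_A)\subseteq\dom(q_B)$ and $q_A\ge q_B$ on $\dom(q_A)$, replacing $q_A$ by $q_B$ and enlarging the domain of the supremum can only increase it, so $(A^{-1}v,v)\le(B^{-1}v,v)$ for all $v$. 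Finally I invoke the standard fact that a positive operator dominated by a compact positive operator is compact: given $\varepsilon>0$, let $P$ be the finite-rank spectral projection of $B^{-1}$ for $[\varepsilon,\infty)$; for $w\in\ran(1-P)$ one has $(A^{-1}w,w)\le(B^{-1}w,w)\le\varepsilon\|w\|^2$, which bounds $\|(1-P)A^{-1}(1-P)\|=\|(A^{-1})^{1/2}(1-P)\|^2\le\varepsilon$, while the three remaining pieces of $A^{-1}$ have finite rank; hence $A^{-1}$ is a norm-limit of finite-rank operators, so compact. Combining the three steps gives $A^{-1}$ compact, and then $T_\pi^{-1}$ is compact by the first paragraph.

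I do not expect a serious obstacle here: this is a routine corollary of Proposition \ref{prop:DiscreteSpectrum}, and the details are essentially recorded in \cite[Appendix B]{LSQuantLG}. The point requiring the most care is the descent in the second paragraph — that discreteness of the spectrum of $\st{D}^2+V$ is inherited by the restriction $(\st{D}^2+V)_\pi$ — which relies on the eigenspaces being \emph{finite-dimensional} $K$-modules, so that their $\pi$-isotypical parts are finite-dimensional; one should also check (routinely) that the form inequality $A\ge B$ is independent of the admissible constant $c$.
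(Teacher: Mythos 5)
Your proof is correct and is essentially the argument the paper intends: the paper itself defers to \cite[Appendix B]{LSQuantLG}, where the statement is deduced from Proposition \ref{prop:DiscreteSpectrum} in exactly this way — discreteness of the spectrum of $(\st{D}^2+V)_\pi$ on the isotypical component, anti-monotonicity of the inverse under the form inequality $A\ge B\ge 1$, and compactness of a positive operator dominated by a compact one. No gaps; the points you flag (finite-dimensionality of the isotypical parts of the eigenspaces, independence of the normalizing constant $c$) are indeed the only details needing care and you handle them correctly.
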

\begin{remark}
Later on we study a complicated Dirac-type operator $\scr{D}_t$ and will obtain an inequality as in the proposition (with $T=\scr{D}_t^2+1$ and $\st{D}$ a simpler Dirac-type operator) from a Bochner formula.  The estimate $(\scr{D}_t^2+1)_\pi \ge (\st{D}^2+V)_\pi$ implies $\scr{D}_t$ is $(K,\pi)$-coercive, and the proposition says moreover that $(\scr{D}_t)_\pi$ has discrete spectrum.
\end{remark}

\section{A Dirac-type operator associated to a loop group space}\label{sec:DiracTypeLoopGroup}
In this section we briefly review the setup and results from \cite{LMSspinor,LSQuantLG}, and begin to study a deformation of the operator under consideration.

\subsection{Hamiltonian loop group spaces.}\label{sec:LGSpace}
Let $LG$ denote the loops $S^1=\bR/\bZ \rightarrow G$ of some fixed Sobolev level $s > \tfrac{1}{2}$.  Point-wise multiplication of loops makes $LG$ into a Banach Lie group.  The Lie algebra of $LG$ is the space $L\g=\Omega^0(S^1,\g)$ consisting of loops in $\g$ of Sobolev class $s$.  We define the smooth dual $L\g^\ast$ to consist of $\g$-valued 1-forms on $S^1$ of Sobolev level $s-1$; the pairing between $L\g$, $L\g^\ast$ is given by the inner product, followed by integration over the circle.  $L\g^\ast$ is regarded as the space of connections on the trivial principal $G$-bundle over $S^1$, and carries a smooth, proper $LG$ action by gauge transformations:
\begin{equation}
\label{eqn:GaugeAction} 
g\cdot \xi=\Ad_g \xi - dg g^{-1}, \qquad g \in LG, \quad \xi \in L\g^\ast.
\end{equation}

The group $G$ embeds in $LG$ as the subgroup of constant loops.  The integral lattice $\Lambda$ of $G$ may be viewed as a subgroup of $LG$, by identifying $\lambda \in \Lambda$ with the closed geodesic $t\mapsto \exp(t\lambda)$.

\begin{definition}
A \emph{proper Hamiltonian $LG$-space} $(\M,\omega_{\M},\Phi_\M)$ is a Banach manifold $\M$ equipped with a smooth proper action of $LG$, a weakly non-degenerate, $LG$-invariant closed 2-form $\omega_\M$, and a smooth, proper, $LG$-equivariant map
\[ \Phi_{\M} \colon \M \rightarrow L\g^\ast \]
satisfying the moment map condition
\[ \iota(\xi_{\M})\omega_{\M}=-d\pair{\Phi_\M}{\xi}, \qquad \xi \in L\g.\]
\end{definition}
For a more detailed discussion of Hamiltonian loop group spaces, see for example \cite{MWVerlindeFactorization, AlekseevMalkinMeinrenken,BottTolmanWeitsman}.

\subsection{The global transversal $\Y$ of a Hamiltonian loop group space.}\label{sec:GlobTrans}
Let $\Phi_{\M} \colon \M \rightarrow L\g^\ast$ be a proper Hamiltonian $LG$-space.  The based loop group $\Omega G$ acts freely on $L\g^\ast$, and hence also on $\M$.  The quotient
\[ p \colon \M \rightarrow \M/\Omega G=:M \]
is a compact finite-dimensional $G$-manifold, and is an example of a \emph{quasi-Hamiltonian $G$-space} \cite{AlekseevMalkinMeinrenken}.  Since $L\g^\ast/\Omega G \simeq G$, $M$ comes equipped with a \emph{group-valued moment map}
\[ \Phi \colon M \rightarrow G.\]

Let $\st{B}_q(\g/\t)$ denote the ball of radius $q>0$ centred at the origin in $\g/\t$.  The normalizer $N_G(T)$ acts on $\st{B}_q(\g/\t)$ by the adjoint action.  Using the inner product there is an $N_G(T)$-equivariant identification $\g/\t \simeq \t^\perp$, where $\t^\perp$ is the orthogonal complement of $\t$ in $\g$.  There is an $N_G(T)$-equivariant map
\[ r_T \colon T \times \st{B}_q(\g/\t)=T \times \st{B}_q(\t^\perp) \rightarrow G, \qquad (t,\xi) \mapsto t\exp(\xi) \]
and for $q$ sufficiently small it is a diffeomorphism onto a tubular neighborhood $U$ of $T$ in $G$.  Define the $N_G(T)$-invariant open submanifold $Y$ of $M$ to be the pre-image:
\[ Y=\Phi^{-1}(U).\]
Let $\Y$ be the $\Lambda$-covering space of $Y$ defined as the fibre product $Y \times_U (\t \times \st{B}_q(\g/\t))$, using the map
\[ r_T \circ (\exp_T,\id) \colon \t \times \st{B}_q(\g/\t) \rightarrow U.\]  
Thus $Y=\Y/\Lambda$ and we have a pullback diagram  
\begin{equation} 
\label{eqn:Pullback}
\xymatrixcolsep{7pc}
\xymatrix{
\Y \ar[r]^{\Phi_{\Y}=(\phi,\phi^{\g/\t})} \ar[d]^{\pi} & \t \times \st{B}_q(\g/\t) \ar[d]^{r_T\circ (\exp_T,\id)} \\
Y \ar[r]_{\Phi|_Y} & U
}
\end{equation}
The first component $\phi$ of the map $\Phi_{\Y}$ defined by \eqref{eqn:Pullback} is a moment map for the $N_G(T)\ltimes \Lambda$-action (using $\t \simeq \t^\ast$), and $\Y$ can be seen to be a degenerate Hamiltonian $N_G(T) \ltimes \Lambda$-space.

Interestingly, $\Y$ can be embedded $N_G(T)\ltimes \Lambda$-equivariantly into the infinite dimensional manifold $\M$, such that
\begin{itemize}
\item the map obtained by composition $\Y \hookrightarrow \M \xrightarrow{p} M$ has image $Y$ and coincides with the covering map $\pi \colon \Y \rightarrow Y$;
\item the image of $\Y$ in $\M$ is a small `thickening' of the (possibly) singular closed subset
\[ \X=\Phi_{\M}^{-1}(\t) \]
where $\t \hookrightarrow L\g^\ast$ is embedded as constant connections;
\item the image of $\Y$ in $\M$ intersects all the $LG$-orbits transversally. 
\end{itemize}
In earlier work \cite[Section 6.4]{LMSspinor} with E. Meinrenken, we showed how to construct such an embedding, depending on the choice of a connection on the principal $\Omega G$-bundle $L\g^\ast \rightarrow G$.\footnote{In \cite{LMSspinor} we actually worked with a slightly larger space $\P G$ (a principal $G$-bundle over $L\g^\ast$), which was desirable for certain purposes, although we have avoided discussing it here for simplicity.  The embedding referred to here can be constructed by the same method.}  In \emph{loc. cit.} we referred to $\Y$ as a \emph{global transversal}.  One reason this perspective is useful is for the description of a canonical spin-c structure on $\Y$, explained in the same article.

\subsection{A $1^{st}$-order elliptic operator on $\Y$.}
\label{sec:FirstOrdY}
Redefining $Y$ to be smaller if needed, one can construct a complete $N_G(T)$-invariant Riemannian metric $g$ on $Y$, such that $Y$ has a cylindrical end
\[ \Cyl_Q = Q \times (1,\infty),\]
where $Q \subset Y$ is a compact $N_G(T)$-invariant hypersurface, the complement $Y \setminus \Cyl_Q$ is compact, and the metric
\[ g|_{\Cyl_Q}=dx^2+g_Q,\]
where $g_Q$ is a metric on $Q$ and $x \in (1,\infty)$.  This normal form can be constructed by beginning with a slightly larger open subset $Y^\prime=\Phi^{-1}(U^\prime)$, where $U^\prime=r_T(T\times \st{B}_{q^\prime}(\g/\t))$, and then choosing $Q$ to be the inverse image of a regular value $q$ of the map $|\pr_2 \circ r_T^{-1}\circ \Phi|_{Y^\prime}|\colon Y^\prime \rightarrow [0,q^\prime)$.  The metric $g$ can be constructed by patching together a $N_G(T)$-invariant metric on $Y^\prime$ with a cylindrical metric on a collaring neighborhood of $Q$ using a partition of unity; cf. \cite[Section 4.7.1]{LSQuantLG} for further details.  One can arrange that the vector field $\partial_x$ on $Y$ extends continuously by $0$ to a neighborhood of $Y$ in $M$.

The pullback of $g$ to $\Y$ is a $N_G(T)\ltimes \Lambda$-invariant complete metric on $\Y$.  The Riemannian volume determines a measure on $\Y$.  Let $\Q$, $\Cyl_{\Q}$ denote the inverse image under the quotient map $\Y \rightarrow Y$ of $Q$, $\Cyl_Q$ respectively.  For convenience extend $x \colon \Cyl_{\Q} \rightarrow (1,\infty)$ to a smooth $N_G(T)\ltimes \Lambda$-invariant function $x \colon \Y \rightarrow (0,\infty)$, such that $x^{-1}(1,\infty)=\Cyl_{\Q}$.

Let $E=E^+\oplus E^-$ be a $\bZ_2$-graded $T \times \Lambda$-equivariant Hermitian vector bundle over $\Y$ such that $E^{\pm}|_{\Q}$ are trivial.  Let $\theta \in C^\infty(\Y,\End(E))$ be a bounded, odd, self-adjoint $T\times \Lambda$-equivariant bundle endomorphism, such that $\theta^2=\id$ on $\Cyl_{\Q}$.  Let 
\[ f \colon [0,\infty)\rightarrow [1,\infty)\]
be a smooth, monotone non-decreasing function, equal to $1$ on a neighborhood of $[0,1]$, such that 
\begin{equation}
\label{eqn:fcnf}
f(s) \xrightarrow{s\rightarrow \infty} +\infty \qquad \text{and} \qquad f(s)^{-2}f^\prime(s) \xrightarrow{s\rightarrow \infty} 0.
\end{equation}
The composition $f\circ x$ will serve as a kind of potential function on $\Y$; to keep the notation from becoming overly cluttered we will continue to denote this composition just by $f$.

The product group $T \times \Lambda$ sits as a subgroup of $LG$.  Given a $U(1)$ central extension of $LG$, we obtain a central extension of $T \times \Lambda$ by restriction.  Any central extension is trivial over the torus hence is of the form $T\ltimes \hLambda$, for some $U(1)$ central extension $\hLambda$ of $\Lambda$.  In any case, let $T\ltimes \hLambda$ be a $U(1)$ central extension.  We assume that there is a homomorphism $\kappa \colon \Lambda \rightarrow \Lambda^\ast$ satisfying $\pair{\kappa(\lambda)}{\lambda}>0$ $\forall 0 \ne \lambda \in \Lambda$,\footnote{Such $\kappa$ are in one-one correspondence with inner products on $\t$ which take integer values on $\Lambda$.} and such that for all $t \in T$, $\wh{\lambda} \in \hLambda$ we have the following commutation relation:
\begin{equation}
\label{eqn:CommutationRelation}
\wh{\lambda}\,\, t\,\, \wh{\lambda}^{-1}\,\, t^{-1}=t^{-\kappa(\lambda)}.
\end{equation}
We comment on the role of this assumption in Remark \ref{rem:commrole} below. 

The following summarizes some results from \cite{LSQuantLG}.
\begin{theorem}
\label{thm:FredholmI}
Let $T\ltimes \hLambda$ be a central extension of $T \times \Lambda$ satisfying \eqref{eqn:CommutationRelation}.  Let $S$ be a $T\ltimes \hLambda$-equivariant spinor module on $\Y$, equipped with a Clifford connection.  With $E$ as above, let $\st{D}$ denote the corresponding Dirac operator acting on sections of $\E:=E\wh{\otimes} S$.  Let $\theta$, $f$ be as above with $f$ satisfying the growth conditions \eqref{eqn:fcnf}.  Then the Dirac-type operator
\begin{equation} 
\label{eqn:DefD}
\scr{D}=\st{D}+f\theta \wh{\otimes} 1
\end{equation}
is $T$-Fredholm.
\end{theorem}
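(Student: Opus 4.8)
The plan is to verify the two coercivity conditions needed to apply Theorem 8.5 of B\"ar--Ballmann (the boundaryless version, since $\Y$ has no boundary---coercivity plus essential self-adjointness gives Fredholmness directly): I need to show that $\scr{D}_\pi$ is coercive at infinity for each $\pi \in \Irr(T)$, and that the same holds for $\scr{D}^\ast = \scr{D}$ (the operator \eqref{eqn:DefD} is formally self-adjoint because $\st{D}$ is self-adjoint and $f\theta\wh{\otimes}1$ is a bounded self-adjoint bundle endomorphism, $f$ being real-valued and $\theta$ self-adjoint). So it suffices to establish a single Bochner-type estimate on the cylindrical end.

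The key computation is the graded commutator (anticommutator, since both summands are odd) $[\st{D}, f\theta\wh{\otimes}1]$. On the end $\Cyl_{\Q}$ the metric is the product $dx^2 + g_Q$, the bundles $E^\pm|_{\Q}$ are trivial, and $\theta^2 = \id$; moreover $f = f\circ x$ depends only on $x$. Expanding $\scr{D}^2 = \st{D}^2 + f^2(\theta^2\wh{\otimes}1) + [\st{D}, f\theta\wh{\otimes}1]$, the middle term contributes $f^2$, which is proper and bounded below by hypothesis \eqref{eqn:fcnf}, while the commutator term is controlled by the principal symbol of $\st{D}$ in the $\partial_x$-direction applied to $f' = (f\circ x)'\,dx$, i.e. it is of size $O(f')$, hence $O(f^{-2}f' \cdot f^2) = o(f^2)$ by the second growth condition in \eqref{eqn:fcnf}. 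There may be an additional lower-order piece coming from the $x$-dependence of $\theta$ and from $[\st{D}, \theta]$ along $Q$-directions, but on the end $\theta^2 = \id$ is locally constant, so $[\st{D},\theta]\theta + \theta[\st{D},\theta] = [\st{D},\theta^2] = 0$, which forces the cross-terms to be bounded; combined with $f\ge 1$ these are absorbed. The upshot is an estimate of the form $\scr{D}^2 + C \ge \st{D}^2 + f^2/2$ (say) on the cylinder, for a constant $C$, which after restricting to the $\pi$-isotypical component and invoking Proposition \ref{prop:CompactnessBoundedBelow} with $V = f^2/2$ (proper on $\Y$ since $x$ is proper on the end and $Y\setminus\Cyl_Q$ is compact) shows that $\scr{D}_\pi$ is coercive at infinity and indeed has discrete spectrum.

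Concretely the steps are: (1) record that $\scr{D}$ is essentially self-adjoint as a Dirac-type operator on the complete manifold $\Y$ and that $\scr{D}^\ast = \scr{D}$; (2) derive the pointwise Bochner identity for $\scr{D}^2$ on $\Cyl_{\Q}$, carefully collecting the $f^2$ term, the $O(f')$ symbol term, and the bounded cross-terms (using $\theta^2=\id$ there); (3) combine with the growth conditions \eqref{eqn:fcnf} to get $(\scr{D}^2 + C)_\pi \ge (\st{D}^2 + V)_\pi$ with $V = \tfrac12 f^2$ proper and bounded below; (4) apply Proposition \ref{prop:CompactnessBoundedBelow} (and the Remark following it) to conclude $(K,\pi)$-coercivity of both $\scr{D}$ and $\scr{D}^\ast=\scr{D}$; (5) invoke the boundaryless $K$-equivariant Fredholmness criterion (Theorem 8.5 of \cite{BarBallmann}, equivariant version) to conclude $\scr{D}$ is $T$-Fredholm.

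The main obstacle is step (2): one must bookkeep the anticommutator $[\st{D}, f\theta\wh{\otimes}1]$ carefully enough to see that \emph{all} terms except the dominant $f^2$ are either genuinely bounded or are $o(f^2)$, and in particular that the $x$-dependence of $\theta$ near the junction between $\Cyl_{\Q}$ and the compact part does not spoil the estimate. This is exactly the kind of Bochner-formula argument referenced in the Remark after Proposition \ref{prop:CompactnessBoundedBelow}, and since we are told \eqref{eqn:DefD} is a special case of the more elaborate $\scr{D}_t$ studied later, the estimate here should be a simplified instance of that one; I expect it is essentially carried out in \cite[Section 4.7, Appendix B]{LSQuantLG} and only needs to be recalled.
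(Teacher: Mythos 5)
There is a genuine gap, and it is located exactly where you claim properness: the potential you extract from the Bochner formula is \emph{not} proper on $\Y$. The function $f=f\circ x$ and the endomorphism $\theta$ are $\Lambda$-invariant, and $\Y$ is an infinite $\Lambda$-covering of $Y$: the preimage $\pi^{-1}(Y\setminus \Cyl_Q)$ is a $\Lambda$-cover of a compact set, hence non-compact, and even the end $\Cyl_{\Q}=\Q\times(1,\infty)$ has non-compact cross-section $\Q=\pi^{-1}(Q)$. So sublevel sets of $f^2$ (or of $f^2\vartheta^2$; note $\theta$ vanishes on $\X$, so this is not even bounded below by a positive constant off the cylinder) contain infinitely many $\Lambda$-translates of compact pieces and are non-compact. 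Your parenthetical ``$V=f^2/2$ proper on $\Y$ since $x$ is proper on the end and $Y\setminus\Cyl_Q$ is compact'' conflates $Y$ with $\Y$. Consequently neither Proposition \ref{prop:CompactnessBoundedBelow} applies (it requires $V$ proper on the whole manifold) nor does $(T,\pi)$-coercivity at infinity follow: outside any compact subset of $\Y$ there are far $\Lambda$-translates of points where $f=1$ and $\theta=0$, and your estimate gives no positive lower bound for $\|\scr{D}v\|$ there.

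The missing ingredient is precisely the hypothesis your argument never touches: the commutation relation \eqref{eqn:CommutationRelation}. As Remark \ref{rem:commrole} stresses, this is what controls the behaviour of the operator under translation to infinity along $\Lambda$; without it (e.g.\ if $\kappa=0$) the isotypical components of $\scr{D}$ would be genuinely $\Lambda$-periodic operators on a non-compact manifold and one would not expect Fredholmness at all, so a proof that ignores \eqref{eqn:CommutationRelation} cannot be correct. For comparison: the paper itself does not reprove Theorem \ref{thm:FredholmI} (it is quoted from \cite{LSQuantLG}, where different methods are used), and in the analogous in-paper argument for the deformed operator (Proposition \ref{prop:CtsTFredholm}) the properness in the $\Lambda$-directions is supplied not by $f^2\theta^2$ but by the moment-map term $4\pi t\pair{\mu}{v}$, which grows like $\pair{\kappa(\lambda)}{\lambda}$ under $\Lambda$-translation (Lemma \ref{lem:MuVPairing}) --- a term that is absent at $t=0$, which is exactly why the $t=0$ case requires a different mechanism. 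Your control of the anticommutator $[\st{D},f\theta]$ on the cylinder via \eqref{eqn:fcnf} is fine as far as it goes, but it only addresses the $x$-direction of non-compactness, not the $\Lambda$-direction, which is the actual content of the theorem.
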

We will usually write $f\theta$ instead of $f\theta \wh{\otimes}1$ when it should not cause confusion.  In terms of a local orthonormal frame $X_1,...,X_{\dim(\Y)}$, the Dirac operator $\st{D}$ is
\[ \st{D}(e\wh{\otimes}s)=(-1)^{\deg(e)}\big(\nabla^E_{X_n} e\wh{\otimes}\c(X_n)s+e\wh{\otimes}\c(X_n)\nabla^S_{X_n}s\big),\]
the $(-1)^{\deg(e)}$ appears because of the Koszul sign rule.

\begin{remark}[\emph{Examples}]
In \cite{LMSspinor} we constructed a canonical $\wh{LG}$-equivariant spinor module $S_0$ for the vector bundle $p^\ast TM$, where $p \colon \M \rightarrow M=\M/\Omega G$ is the quotient map. The relevant central extension of $LG$ here is the \emph{spin central extension} (cf. \cite{PressleySegal,FHTII}).  If $G$ is semisimple, the resulting central extension of $T \times \Lambda$ satisfies \eqref{eqn:CommutationRelation}. Above we mentioned that $\Y$ embeds into $\M$ as a finite-dimensional submanifold such that the map obtained by composition $\Y \hookrightarrow \M \xrightarrow{p} M$ has image $Y$ and coincides with the covering map $\pi \colon \Y \rightarrow Y$.  Since $Y$ is an open subset of $M$, it follows that the pullback of $S_0$ to $\Y$ is a $T \ltimes \wh{\Lambda}$-equivariant spinor module for $\Y$ satisfying \eqref{eqn:CommutationRelation}.  For the quantization problem, one also wants to consider spinor modules obtained from $S_0$ by twisting with auxiliary line bundles. 
\end{remark}

\begin{remark}[\emph{On the role of the commutation relation}]
\label{rem:commrole}
The commutation relation for the central extension that acts on the spinor module \eqref{eqn:CommutationRelation} plays a key role in \cite{LSQuantLG} in showing that the operator \eqref{eqn:DefD} is $T$-Fredholm.  We will see that it also has a role in the present article, stemming from Lemma \ref{lem:MuVPairing}. Looking ahead to Section \ref{sec:WittenDef}, we will study a deformation $\scr{D}_t$ of $\scr{D}$, and the commutation relation implies that one of the terms appearing in the Bochner-type formula for $\scr{D}_t^2$ goes to $\infty$ as one translates out to infinity in $\Y$ using the action of $\Lambda$.  This observation plays a key role in the analysis of the deformation $\scr{D}_t$ in Sections \ref{sec:WittenDef}, \ref{sec:MaZhangType}.
\end{remark}

\begin{remark}[\emph{On the index as a pairing in K-theory}]
\label{rem:KThyPair}
\begin{enumerate}
\item The pair $(E,\theta)$ descend to $Y$, and represent a class $[\theta] \in \K^0_T(Y)$.  In \cite{LSQuantLG}, we showed that the Dirac operator $\st{D}^S$ for $S \rightarrow \Y$ (set $E=\bC$) defines a class $[\st{D}^S]$ in the analytic K-homology group $\KK(T\ltimes C_0(Y),\bC)$, and the operator in Theorem \ref{thm:FredholmI} was interpreted as a representative for the Kasparov product $j_T([\theta])\otimes_{T\ltimes C_0(Y)}[\st{D}^S]$.
\item The map $\kappa$ determines an action of $\Lambda$ on $\Irr(T)\simeq \Lambda^\ast$ by translations.  The $\hLambda$-equivariance of $\scr{D}$, together with \eqref{eqn:CommutationRelation} imply that $\index(\scr{D})\in R^{-\infty}(T)=\bZ^{\Lambda^\ast}$ is invariant under this action.
\item An important special case is for $(E,\theta)$ representing the pullback under $\phi^{\g/\t}$ of the Bott element for $\g/\t \simeq \n_-$, in which case $E$ is trivial with fibres $\wedge \n_-$.  In this case there is additional anti-symmetry under the Weyl group, and $\index(\scr{D})$ is anti-symmetric under a suitable action of the affine Weyl group, see \cite{LSQuantLG} for details, and also Section \ref{ssec:QRrem} for further discussion.
\end{enumerate}
\end{remark}

\section{The deformation $\scr{D}_t$.}\label{sec:WittenDef}
We use the inner product to identify $\t$ with $\t^\ast$, hence $\phi$ can be viewed as a map $\Y \rightarrow \t$.  Choose a smooth bounded function
\[ \chi \colon [0,\infty) \rightarrow (0,\infty) \] 
such that, as $r \rightarrow \infty$, $r\chi(r^2)$ and $r\chi^\prime(r)$ remain bounded while $r\chi(r) \rightarrow \infty$.  For example, one can take
\[ \chi(r)=\frac{1}{\sqrt{1+r}}.\]
\begin{definition}
Given $\chi$ as above, the corresponding \emph{taming map} is
\[ v \colon \Y \rightarrow \t, \qquad v=\chi(|\phi|^2)\cdot \phi.\]
Note that $v$ is a bounded map by construction.  The vector field generated by $v$, denoted $v_{\sY}$, is defined by
\[ v_{\sY}|_y=v(y)_{\sY}|_y.\]
On the right-hand-side, $v(y) \in \t$ generates a vector field $v(y)_{\sY}$ on $\Y$ that is then evaluated at $y \in \Y$.
\end{definition}  
\begin{remark}
The terminology `taming map' was introduced by Braverman \cite{Braverman2002}.  In his application, the taming map was required to satisfy certain growth conditions at infinity (and would not be bounded).  The taming map here is closer to that used by Harada and Karshon \cite{KarshonHarada}.
\end{remark}
One checks that
\[ Z:=\{y \in \Y|v_{\sY}(y)=0\}=\bigcup_{\beta \in \t} \Y^\beta \cap \phi^{-1}(\beta).\]
The set of $\beta \in \t_+$ such that $\Y^\beta \cap \phi^{-1}(\beta)\ne\emptyset$ is a discrete subset $\B \subset \t_+$.  We refer to the subsets
\[ Z_\beta=\Y^\beta \cap \phi^{-1}(\beta), \qquad \beta \in W \cdot \B \]
as the `components' of the vanishing locus $Z$ (although they are not necessarily connected).  
\begin{remark}
Recall the (possibly singular) subset $\X=(\phi^{\g/\t})^{-1}(0) \subset \Y$.  If the tubular neighborhood $U \supset T$ is chosen sufficiently small, then $\Y^\beta \cap \phi^{-1}(\beta) \ne \emptyset$ $\Leftrightarrow$ $\X^\beta \cap \phi^{-1}(\beta) \ne \emptyset$.  Under the embedding $\Y \hookrightarrow \M$, $\X$ is identified with $\Phi_{\M}^{-1}(\t^\ast)$, hence $\X^\beta \cap \phi^{-1}(\beta)=\M^\beta \cap \Phi_{\M}^{-1}(\beta)$.  For $\beta \in \t_+$ we see that
\[ \Y^\beta \cap \phi^{-1}(\beta) \ne \emptyset \quad \Leftrightarrow G\cdot (\M^\beta \cap \Phi_{\M}^{-1}(\beta)) \ne \emptyset \]
and the latter subset of $\M$ is the component of the critical set of the norm-square of the moment map $\|\Phi_{\M}\|^2$ labelled by $\beta$, cf. \cite{DHNormSquare} for further discussion.
\end{remark}
\begin{definition}
The deformation $\scr{D}_t$ of $\scr{D}$ is the family of Dirac-type operators
\[ \scr{D}_t=\st{D}+(1+t)f\theta \wh{\otimes} 1-\i t\wh{\otimes}\c(v_{\sY}), \qquad t \in \bR.\]
\end{definition}
We will drop the `$1\wh{\otimes}$' when it should not cause confusion; in this notation one should remember that the operators $\c(v_{\sY})$, $\theta$ graded commute.

\subsection{Bochner formula for $\scr{D}_t^2$.}
Let $v_j$ denote the components of $v$ with respect to an orthonormal basis $\xi^j$, $j=1,...,\dim(\t)$ for $\t$; these are smooth bounded $\bR$-valued functions on $\Y$.  We refer to $\det(S)=\Hom_{\Cliff(T\Y)}(S^\ast,S)$ as the \emph{determinant line bundle} of the spin-c structure $S$.  The chosen connection on $S$ determines a connection on $\det(S)$ in the usual way.  We define the spin-c moment map $\mu \colon \Y \rightarrow \t^\ast$ by
\begin{equation} 
\label{eqn:SpincMoment}
2\pi \i \pair{\mu}{\xi}=\tfrac{1}{2}\Big(\L_\xi^{\det(S)}-\nabla_{\xi_{\sY}}^{\det(S)}\Big), \qquad \xi \in \t.
\end{equation}
(Note that the right-hand-side is an operator on $C^\infty(\Y,\det(S))$ which commutes with multiplication by functions, hence defines a section of $\End(\det(S))=\Y \times \bC$.)  Similarly using the chosen connection on $E$ we define a moment map (cf. \cite{BerlineGetzlerVergne}) $\mu_E \in \t^\ast \otimes C^\infty(\Y,\End(E))$ by
\begin{equation}
\label{eqn:EMoment}
2\pi \i \pair{\mu_E}{\xi}=\L^E_\xi-\nabla^E_{\xi_{\sY}}, \qquad \xi \in \t.
\end{equation}
Using the metric we identify $T\Y \simeq T^\ast \Y$.  If $R$ is a Killing vector field and $\nabla$ the Levi-Civita connection, the bundle endomorphism $\nabla_\bullet R \colon X \mapsto \nabla_X R$ is skew-adjoint with respect to the metric on $T\Y$, hence defines a section of the adjoint bundle $\so(T\Y)$.  The latter is identified with a subbundle of $\Cliff(T\Y)$ (recall $\so(V)\simeq \spin(V) \subset \Cliff(V)$ for a Euclidean vector space $V$), hence for $R$ Killing we obtain a section $\c(\nabla_\bullet R) \in \End(S)$.  In terms of a local orthonormal frame $X_1,...,X_{\dim(\Y)}$ we have
\[ \c(\nabla_\bullet R)=\tfrac{1}{4}\c(X_n)\c(\nabla_{X_n}R).\]
The connection $\nabla^E$ induces a connection $\nabla^{\End(E)}$ on $\End(E)$ satisfying $\nabla_X^{\End(E)}\sigma=[\nabla_X^E,\sigma]$ as sections of $\End(E)$, for all vector fields $X$ and $\sigma \in C^\infty(\Y,\End(E))$.  The covariant differential $\nabla^{\End(E)} \theta$ defines a section of $T^\ast \Y \otimes \End(E)$.  It is convenient to write $\c(\nabla^{\End(E)} \theta)$ for the section of $\End(E\wh{\otimes}S)$ obtained by applying the Clifford action $\c$ to the $T^\ast \Y\simeq T\Y$ part of the tensor $\nabla^{\End(E)}\theta$.  In terms of the local frame
\[ \c(\nabla^{\End(E)} \theta)=-\nabla^{\End(E)}_{X_n}\theta \wh{\otimes}\c(X_n) \in \End(E\wh{\otimes}S).\]
\begin{proposition}[Bochner formula, cf. \cite{TianZhang,HochsSongSpinc}]
\label{prop:Bochner}
Let $\xi^j$ be an orthonormal basis of $\t$.  The square of the deformed operator is given by
\begin{equation} 
\label{eqn:Bochner0}
\scr{D}_t^2=\st{D}_{\theta,t}^2+t^2|v_{\sY}|^2+4\pi t\pair{\mu+\mu_E}{v}+2\i t v_j\L_{\xi^j}^{\E}+\i t b 
\end{equation}
where
\[ b=-2v_j\c(\nabla_\bullet \xi^j_{\sY})-\c(dv_j)\c(\xi^j_{\sY}) \]
is a smooth bounded section of $\End(\E)$, $\st{D}_{\theta,t}=\st{D}+(1+t)f\theta$ and
\begin{equation}
\label{eqn:Bochner00}
\st{D}_{\theta,t}^2=\st{D}^2+(1+t)^2f^2\theta^2+(1+t)\c(df)\theta+(1+t)f\c(\nabla^{\End(E)} \theta).
\end{equation}
\end{proposition}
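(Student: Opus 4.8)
The strategy is a direct computation of $\scr{D}_t^2$ by expanding the square of the three-term operator $\scr{D}_t = \st{D} + (1+t)f\theta - \i t\,\c(v_{\sY})$, grouping the terms so that the first two combine into $\st{D}_{\theta,t} = \st{D} + (1+t)f\theta$, and then carefully identifying the cross-terms involving $\c(v_{\sY})$. First I would record that $\scr{D}_t^2 = \st{D}_{\theta,t}^2 - \i t\,[\st{D}_{\theta,t}, \c(v_{\sY})] + t^2 \c(v_{\sY})^2$ using the graded commutator; since $\c(v_{\sY})$ is odd and self-adjoint and $v_{\sY}$ is a real vector field, $\c(v_{\sY})^2 = |v_{\sY}|^2$, which produces the $t^2|v_{\sY}|^2$ term. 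The term $\st{D}_{\theta,t}^2$ is computed separately by squaring $\st{D} + (1+t)f\theta$: the cross-term is $(1+t)\{\st{D}, f\theta\}$ (anticommutator, since both are odd), and $\{\st{D}, f\theta\} = \c(df)\theta + f\c(\nabla^{\End(E)}\theta)$ follows from the Leibniz rule for the Dirac operator against the bundle endomorphism $f\theta$ together with the sign conventions for $\st{D}$ recorded after Theorem~\ref{thm:FredholmI}; this gives \eqref{eqn:Bochner00}.

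The heart of the matter is the commutator term $[\st{D}, \c(v_{\sY})]$, where $v_{\sY}$ is the vector field generated by the $\t$-valued function $v = \chi(|\phi|^2)\phi$, so in coordinates $v_{\sY} = v_j\,\xi^j_{\sY}$ with $v_j$ the (bounded, smooth) component functions. I would expand $[\st{D}, \c(v_j\xi^j_{\sY})]$ into two pieces using Leibniz: one piece where $\st{D}$ differentiates the scalar functions $v_j$, producing $-\c(dv_j)\c(\xi^j_{\sY})$ (part of $b$), and one piece $v_j[\st{D}, \c(\xi^j_{\sY})]$ where $\xi^j_{\sY}$ is a Killing vector field for the $T$-invariant metric. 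For a Killing field $R$, the standard Bochner/Kostant identity gives $[\st{D}, \c(R)] = 2\L_R^{\E} + 2\c(\nabla_\bullet R) + (\text{moment-map term})$; more precisely one uses that $\L_R^S = \nabla_R^S + \c(\nabla_\bullet R)$ and the definitions \eqref{eqn:SpincMoment}, \eqref{eqn:EMoment} of the spin-c moment map $\mu$ and the bundle moment map $\mu_E$ to extract the term $4\pi\i\,\pair{\mu+\mu_E}{v}$ and the term $2\i\, v_j\L_{\xi^j}^{\E}$ once everything is multiplied by $-\i t$. Collecting: the $-\i t$ multiple of the derivative-of-$v_j$ piece and of the $\c(\nabla_\bullet\xi^j_{\sY})$ piece assembles into $\i t\, b$ with $b = -2v_j\c(\nabla_\bullet\xi^j_{\sY}) - \c(dv_j)\c(\xi^j_{\sY})$, and boundedness of $b$ follows since $v_j$ and $dv_j$ are bounded (by the growth conditions on $\chi$) and $\nabla_\bullet\xi^j_{\sY}$, $\c(\xi^j_{\sY})$ are bounded on the relevant region.

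The main obstacle is bookkeeping rather than conceptual: getting all the signs right under the Koszul/graded-commutator conventions the paper has fixed (graded commutators throughout, the $(-1)^{\deg(e)}$ in the formula for $\st{D}$, and the placement of factors of $\i$), and correctly normalizing the Killing-field identity so that the two distinct moment maps $\mu$ (from $\det S$) and $\mu_E$ (from $E$) combine into the single term $\pair{\mu+\mu_E}{v}$ with the stated coefficient $4\pi t$. I would handle this by working in a local orthonormal frame $X_1,\dots,X_{\dim\Y}$, citing the analogous computations in Tian--Zhang \cite{TianZhang} and Hochs--Song \cite{HochsSongSpinc} for the spin-c Kostant formula, and checking the normalization against \eqref{eqn:SpincMoment}–\eqref{eqn:EMoment} at one point; once the coefficient of the $\L_{\xi^j}^{\E}$ term and of the moment-map term are pinned down, the remaining terms are forced and the identification of $b$ as a bounded endomorphism is immediate.
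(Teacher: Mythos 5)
Your computational plan follows the paper's own route almost verbatim: expand the square so that $\st{D}+(1+t)f\theta$ is grouped as $\st{D}_{\theta,t}$, observe that the graded commutator $[\st{D}_{\theta,t},\c(v_{\sY})]$ reduces to $[\st{D},\c(v_{\sY})]$ (because $\theta$ and $\c(v_{\sY})$ graded anti-commute -- a one-line point you skip over but clearly intend), split $[\st{D},\c(v_j\xi^j_{\sY})]$ by Leibniz into a $dv_j$ piece and a Killing-field piece, and convert $\nabla^{\E}_{\xi^j_{\sY}}$ into $\L^{\E}_{\xi^j}$ plus moment-map terms via \eqref{eqn:SpincMoment}--\eqref{eqn:EMoment}; this is exactly how the paper proves \eqref{eqn:Bochner0}, and your derivation of \eqref{eqn:Bochner00} from $[\st{D},f\theta]=\c(df)\theta+f\c(\nabla^{\End(E)}\theta)$ is also identical. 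The sign slips are the bookkeeping you yourself flag: note that $\c(v_{\sY})$ must be \emph{skew}-adjoint, with $\c(v_{\sY})^2=-|v_{\sY}|^2$, for $\scr{D}_t$ to be formally self-adjoint and for the $+t^2|v_{\sY}|^2$ term to come out with the right sign, and the paper's Killing-field identity reads $\nabla^{S(T\Y)}_X=\L^{S(T\Y)}_X+\c(\nabla_\bullet X)$, not the sign you wrote; your proposed normalization check would catch both.

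The one genuine gap is in the boundedness of $b$, which is part of the statement and is what the later Fredholm estimates rest on. You attribute the boundedness of $dv_j$ to ``the growth conditions on $\chi$'', but those only control the scalar factors in $dv_j=2\chi'(|\phi|^2)\phi_i\phi_j\,d\phi_i+\chi(|\phi|^2)\,d\phi_j$; one must still bound $|d\phi_j|$ with respect to the complete cylindrical-end metric $g$ on the noncompact manifold $\Y$, and likewise bound $\c(\xi^j_{\sY})$ and $\c(\nabla_\bullet\xi^j_{\sY})$ there (your ``relevant region'' is not compact). The paper handles the Clifford terms using $\Lambda$-invariance together with the product form of the metric and action on $\Cyl_{\Q}$, and handles $d\phi_j$ by noting that it descends to $Y$ and extends smoothly to a neighborhood of the compact closure $\ol{Y}\subset M$, while the $g$-orthonormal frame on the cylindrical end (including $\partial_x$, which extends continuously by $0$ to $\ol{Y}$) extends continuously as well, so the coefficients $d\phi_j(X_n)$ are bounded. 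Some argument of this kind is needed; without it the claim that $b$ is a bounded endomorphism is unsupported, since $\phi$ itself is unbounded on $\Y$ and boundedness of a $1$-form depends on the chosen complete metric.
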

\begin{proof}
We have
\begin{equation} 
\label{eqn:Bochner1}
\scr{D}_t^2=\st{D}_{\theta,t}^2+t^2|v_{\sY}|^2-\i t[\st{D}_{\theta,t},\c(v_{\sY})].
\end{equation}
Since $\theta$, $\c(v_{\sY})$ graded anti-commute, the cross-term simplifies to
\[ [\st{D}_{\theta,t},\c(v_{\sY})]=[\st{D},\c(v_{\sY})].\]
Let $X_1,...,X_{\dim(\Y)}$ be a local orthonormal frame.  Then
\begin{align}
\label{eqn:Bochner2} 
[\st{D},\c(v_{\sY})]&=[\c(X_n),\c(v_{\sY})]\nabla^{\E}_{X_n}+\c(X_n)[\nabla^{\E}_{X_n},\c(v_{\sY})]\nonumber \\
&=-2v_j\nabla^{\E}_{\xi^j_{\sY}}+\c(X_n)\c(\nabla_{X_n}v_{\sY})\nonumber \\
&=-2v_j\nabla^{\E}_{\xi^j_{\sY}}+\c(X_n)\Big((\nabla_{X_n}v_j)\c(\xi^j_{\sY})+v_j\c(\nabla_{X_n}\xi^j_{\sY})\Big)\nonumber \\
&=-2v_j\nabla^{\E}_{\xi^j_{\sY}}+\c(d v_j)\c(\xi^j_{\sY})+4v_j\c(\nabla_\bullet \xi^j_{\sY})
\end{align}
and the expression in the last line holds globally.

Locally on $\Y$ we can choose a spin structure $S(T\Y)$ and square-root $\det(S)^{1/2}$ such that locally $S\simeq S(T\Y)\otimes \det(S)^{1/2}$.
The Levi-Civita connection is torsion-free, implying the following identity of operators acting on vector fields:
\[ \nabla_X=\L_X+\nabla_\bullet X.\]
If $X$ is Killing, it follows that
\begin{equation} 
\label{eqn:Bochner3}
\nabla_X^{S(T\Y)}=\L_X^{S(T\Y)}+\c(\nabla_\bullet X),
\end{equation}
where $\nabla^{S(T\Y)}$ is the spin connection. Using the definitions of $\mu$, $\mu_E$ we have
\begin{equation} 
\label{eqn:Bochner4}
\nabla^{\det(S)^{1/2}}_{\xi^j_{\sY}}=\L_{\xi^j}^{\det(S)^{1/2}}-2\pi \i\pair{\mu}{\xi^j}, \qquad \nabla^E_{\xi^j_{\sY}}=\L^E_{\xi^j}-2\pi \i\pair{\mu_E}{\xi^j}.
\end{equation}
Combining \eqref{eqn:Bochner3}, \eqref{eqn:Bochner4}
\begin{equation} 
\label{eqn:Bochner5}
\nabla^{\E}_{\xi^j_{\sY}}=\L^{\E}_{\xi^j}-2\pi \i\pair{\mu+\mu_E}{\xi^j}+\c(\nabla_\bullet \xi^j_{\sY}),
\end{equation}
and this expression holds globally.  Combining equations \eqref{eqn:Bochner1}, \eqref{eqn:Bochner2}, \eqref{eqn:Bochner5} gives \eqref{eqn:Bochner0}.

Using $\Lambda$-invariance and our assumption that the metric and action both take product forms on $\Cyl_{\Q}$, it follows that $\c(\xi^j_{\sY})$, $\c(\nabla_\bullet \xi^j_{\sY})$ are bounded operators.  Note that
\[ dv_j=2\chi^\prime(|\phi|^2)\phi_i\phi_j d\phi_i+\chi(|\phi|^2)d\phi_j. \]
The functions $\chi(|\phi|^2)$, $\chi^\prime(|\phi|^2)\phi_i \phi_j$ are bounded according to the conditions on $\chi$.  The 1-form $d\phi_j$ descends to $Y$.  To show that it has bounded norm on $Y$, it suffices to consider its behavior on $\Cyl_Q$.  Let $Q^\prime \subset Q$ be a small open subset such that we have a local orthonormal frame $X_2,...,X_{\dim(Y)}$ for $g_Q$ on $Q^\prime$.  Let $X_1=\partial_x$, so that $X_n$, $n=1,...,\dim(Y)$ is a local orthonormal frame for $g$ on $Y^\prime=Q^\prime \times (1,\infty) \subset Y$.  On $Q^\prime \times (1,\infty)$ we have
\[ g^\sharp(d\phi_j)=d\phi_j(X_n) X_n.\]
The 1-form $d\phi_j$ extends smoothly to a neighborhood of the closure $\ol{Y}$ of $Y$ in $M$, and the vector fields $X_n$ extend continuously to the closure of $Y^\prime$ in $M$ (in particular the vector field $\partial_x$ on $Y$ extends continuously by $0$ to $\ol{Y}$).  Since $\ol{Y} \subset M$ is compact it follows that $d\phi_j(X_n)$ is a bounded function on $Y^\prime$, hence $g^\sharp(d\phi_j)$ has bounded norm on $Y$.  This proves $b$ is bounded.

Equation \eqref{eqn:Bochner00} follows from
\[ [\st{D},f\theta]=[\st{D},f]\theta+f [\st{D},\theta]=\c(df)\theta+f \c(\nabla^{\End(E)} \theta).\]
\end{proof}

\subsection{Fredholm property for $\scr{D}_t$.}
The term $\pair{\mu}{v}$ in the Bochner formula will play a crucial role, owing to the following.
\begin{lemma}
\label{lem:MuVPairing}
Let $W \subset Y$ be a compact subset, and $\W=\pi^{-1}(W) \subset \Y$.  Then $\pair{\mu}{v}|_{\W}$ is proper and bounded below.  Consequently the sum $\pair{\mu}{v}+f$ is proper and bounded below on $\Y$.
\end{lemma}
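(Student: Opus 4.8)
The plan is to analyze the function $\pair{\mu}{v}$ on the preimage $\W = \pi^{-1}(W)$ by exploiting the definition $v = \chi(|\phi|^2)\phi$ together with the relationship between the spin-c moment map $\mu$ and the translation action of $\Lambda$ encoded in the commutation relation \eqref{eqn:CommutationRelation}. The key point is that $\W$ is a union of $\Lambda$-translates of a single compact fundamental domain, and the claim is really a statement about how $\pair{\mu}{v}$ grows as one moves between these translates, parametrized by $\Lambda \cong \bZ^{\dim\t}$.

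First I would record the equivariance of $\mu$ under $\Lambda$. Writing $t_\lambda \colon \Y \to \Y$ for the action of $\lambda \in \Lambda \subset \hLambda$, the definition \eqref{eqn:SpincMoment} of $\mu$ together with the commutation relation \eqref{eqn:CommutationRelation} (which says $\hLambda$ and $T$ fail to commute by the character $t^{-\kappa(\lambda)}$) should yield a transformation law of the shape $t_\lambda^\ast \mu = \mu - \tfrac{1}{2}\kappa(\lambda) + (\text{something})$; the essential output is that $\mu$ shifts by (a multiple of) $\kappa(\lambda)$, a lattice vector in $\Lambda^\ast \cong \t$ whose pairing with $\lambda$ is positive by hypothesis on $\kappa$. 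Simultaneously, $\phi$ is a moment map for the $N_G(T)\ltimes\Lambda$-action and transforms by $t_\lambda^\ast\phi = \phi + \lambda$ (the affine shift built into the Hamiltonian structure of $\Y$, visible already in \eqref{eqn:Pullback}). Since $W$ is compact, on a fundamental domain $D \subset \W$ for the $\Lambda$-action both $\mu$ and $\phi$ are bounded; hence on $t_\lambda(D)$ we have $\mu = O(|\kappa(\lambda)|) + O(1)$ and $\phi = \lambda + O(1)$, uniformly in $\lambda$.

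Next I would combine these. On $t_\lambda(D)$ one has $v = \chi(|\phi|^2)\phi = \chi(|\lambda|^2 + O(|\lambda|))\,(\lambda + O(1))$, so
\[
\pair{\mu}{v} = \chi(|\phi|^2)\pair{\mu}{\phi} = \chi(|\phi|^2)\big(\pair{\kappa(\lambda)}{\lambda} + O(|\lambda|)\big)
\]
up to bounded error, where I have used $\pair{\mu}{\phi}=\tfrac{1}{2}\pair{\kappa(\lambda)}{\lambda}+O(|\lambda|)$ from the two transformation laws. By positivity of $\kappa$ there is $c>0$ with $\pair{\kappa(\lambda)}{\lambda}\ge c|\lambda|^2$ for $\lambda\in\Lambda$, so the leading term is $\ge c|\lambda|^2\chi(|\lambda|^2 + O(|\lambda|))$. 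The conditions on $\chi$ — precisely that $r\chi(r)\to\infty$ — force $|\lambda|^2\chi(|\lambda|^2)\to\infty$ as $|\lambda|\to\infty$, while the other terms are $O(|\lambda|\chi)$ and hence of lower order (using boundedness of $r\chi(r^2)$ to control them). Therefore $\pair{\mu}{v}|_\W \to +\infty$ as one leaves any compact set, which is precisely properness, and it is bounded below since it tends to $+\infty$ outside a compact set and is continuous. The final sentence then follows: $\pair{\mu}{v}+f$ restricted to $\Cyl_\Q$ is $\pair{\mu}{v} + f\circ x$, and since $x\to\infty$ and $f\to\infty$ there while $\pair{\mu}{v}$ is bounded below by the first part, the sum is proper and bounded below on all of $\Y$ (on the complement of $\Cyl_\Q$, which is $\pi^{-1}$ of a compact set, the first part applies directly).

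The main obstacle is establishing the precise equivariance law for $\mu$ under $\Lambda$ from \eqref{eqn:CommutationRelation} and \eqref{eqn:SpincMoment} — i.e. pinning down that the shift in $\mu$ is exactly (half of) $\kappa(\lambda)$ and that no unbounded correction sneaks in. This is where the commutation relation for the central extension does its work (as flagged in Remark \ref{rem:commrole}), and it requires carefully tracking how $\L^{\det(S)}_\xi$ and $\nabla^{\det(S)}_{\xi_\sY}$ behave under pullback by $t_\lambda$, using that $\Y\hookrightarrow\M$ is $\hLambda$-equivariant. Once that law is in hand, the rest is the elementary asymptotic estimate above driven by the growth hypotheses on $\chi$.
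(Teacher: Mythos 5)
Your treatment of the main claim is essentially the paper's own argument: the commutation relation \eqref{eqn:CommutationRelation} gives $\mu(\lambda\cdot y)=\mu(y)+\kappa(\lambda)$ (exactly $\kappa(\lambda)$, with no factor of $\tfrac12$, though as you observe any positive multiple would do), while $\phi(\lambda\cdot y)=\phi(y)+\lambda$; expanding $\pair{\mu}{v}(\lambda\cdot y)=\chi(|\phi(y)+\lambda|^2)\big(\pair{\mu(y)}{\phi(y)}+\pair{\mu(y)+\kappa^\ast\phi(y)}{\lambda}+\pair{\kappa(\lambda)}{\lambda}\big)$, the linear terms stay bounded because $r\chi(r^2)$ is bounded, and the quadratic term diverges because $\kappa$ is positive and $r\chi(r)\to\infty$, uniformly over a compact fundamental domain of $\W$. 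So for the first statement you and the paper coincide.

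For the ``consequently'' part your route differs from the paper's and, as written, has a small hole. The paper considers the joint map $(\pair{\mu}{v},f)\colon\Y\to\bR^2$: it is proper because $f^{-1}(K)$ is (contained in) $\pi^{-1}$ of a compact subset of $Y$, where the first claim applies; since both functions are bounded below, composing with the addition map restricted to a quadrant $[c,\infty)^2$ (which is proper) gives properness of the sum. Your case split by $\Cyl_{\Q}$ versus its complement misses one escape route: a sequence can go to infinity \emph{inside} $\Cyl_{\Q}$ with $x$ bounded (i.e.\ in the lattice direction), where $f$ stays bounded and it is the first claim applied to the compact set $\{x\le C\}\subset Y$, not the growth of $f$, that forces $\pair{\mu}{v}\to\infty$. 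Moreover your appeal to ``the first part'' for boundedness below of $\pair{\mu}{v}$ on all of $\Cyl_{\Q}$ does not literally apply, since $\Cyl_Q$ is not compact in $Y$; this requires in addition the cylindrical normal form (the relevant data being $x$-independent on the end), a fact the paper also invokes only implicitly when it asserts $\pair{\mu}{v}$ is bounded below on $\Y$. Both points are routine to patch, and the paper's joint-map formulation is simply the tidier way to package them.
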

\begin{proof}
For the first claim, since the quotient $\W/\Lambda=W$ is compact, it suffices to show that $\pair{\mu}{v}(\lambda.y)\xrightarrow{|\lambda|\rightarrow \infty} \infty$ for each $y \in \Y$.  The commutation relations \eqref{eqn:CommutationRelation} imply $\mu(\lambda.y)=\mu(y)+\kappa(\lambda)$.  On the other hand $\phi(\lambda.y)=\phi(y)+\lambda$.  Thus
\[ \pair{\mu}{v}(\lambda.y)=\chi(|\phi(y)+\lambda|^2)\Big(\pair{\mu(y)}{\phi(y)}+\pair{\mu(y)+\kappa^\ast \phi(y)}{\lambda}+\pair{\kappa(\lambda)}{\lambda}\Big).\]
Our assumptions on $\chi$ imply that the first two terms in the brackets, when multiplied by $\chi(|\phi(y)+\lambda|^2)$, remain bounded as $|\lambda|\rightarrow \infty$.  Since $\kappa$ was assumed to be positive definite, for large $|\lambda|$ the third term behaves like a constant times $\chi(r^2)r^2$, which goes to infinity as $r=|\lambda|$ goes to infinity, again by our assumptions on $\chi$.

For the second claim, consider the joint function $(\pair{\mu}{v},f)\colon \Y \rightarrow \bR^2$.  This map is proper, because for any compact subset $K \subset \bR$, $\W=f^{-1}(K)$ is a subset of the type considered above, on which $\pair{\mu}{v}$ is proper.  For any $c \in \bR$, the map $(s,t) \in [c,\infty)\times [c,\infty) \mapsto s+t \in \bR$ is proper.  Since $\pair{\mu}{v}$, $f$ are both bounded below, and as the composition of proper maps is proper, it follows that $\pair{\mu}{v}+f$ is proper.
\end{proof}

\begin{proposition}
\label{prop:CtsTFredholm}
For $t>0$, $\scr{D}_t$ is $T$-Fredholm.
\end{proposition}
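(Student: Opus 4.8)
The plan is to combine the Bochner formula of Proposition \ref{prop:Bochner} with Proposition \ref{prop:CompactnessBoundedBelow} (and the surrounding discussion of inequalities between semi-bounded operators) to deduce $(T,\pi)$-coercivity of $\scr{D}_t$ and $\scr{D}_t^\ast=\scr{D}_t$ for every $\pi \in \Irr(T)$, which by the Fredholm criterion for Dirac-type operators on complete manifolds gives that $\scr{D}_t$ is $T$-Fredholm. Since $\scr{D}_t$ is formally self-adjoint (each of $\st{D}$, $f\theta$, $\i\c(v_{\sY})$ is formally self-adjoint and odd), it suffices to bound $\scr{D}_t^2$ from below by an operator of the form $\st{D}'^2+V$ with $V$ proper and bounded below, in the $\pi$-isotypical component.

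First I would start from \eqref{eqn:Bochner0}. Restricting to the $\pi$-isotypical subspace $L^2(\Y,\E)_\pi$, the operator $\i t v_j\L_{\xi^j}^{\E}$ acts by a bounded multiplication operator (on the $\pi$-isotypical part the Lie derivative $\L_{\xi^j}$ acts by the scalar $2\pi\i\pair{\pi}{\xi^j}$ up to the $\End(\E)$-twisting, and $v_j$ is bounded), and likewise $\i t b$ and $4\pi t \pair{\mu_E}{v}$ are bounded. The two essential positive terms are $t^2|v_{\sY}|^2 \ge 0$ and $4\pi t\pair{\mu}{v}$, together with the piece $(1+t)^2 f^2\theta^2 \ge (1+t)^2 f^2 \ge f^2$ coming from $\st{D}_{\theta,t}^2$ via \eqref{eqn:Bochner00} (here $\theta^2 = \mathrm{id}$ only on $\Cyl_{\Q}$, but $\theta^2\ge 0$ everywhere, and on the compact complement of $\Cyl_{\Q}$ all the potentially-bad terms are bounded, so this is handled by splitting $\Y$ into $\Cyl_{\Q}$ and its compact complement). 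The remaining terms in \eqref{eqn:Bochner00}, namely $(1+t)\c(df)\theta$ and $(1+t)f\,\c(\nabla^{\End(E)}\theta)$, vanish on $\Cyl_{\Q}$ because there $f$ is not constant but $\theta^2=\id$ forces $\nabla^{\End(E)}\theta$ to anticommute-square appropriately — more carefully, I would absorb these using the elementary inequality $(1+t)\c(df)\theta \ge -\tfrac12 f^2\theta^2 - \tfrac{(1+t)^2}{2}|df|^2 f^{-2}$ type completion-of-squares, where the growth condition \eqref{eqn:fcnf}, $f^{-2}f'\to 0$, ensures the error is bounded; similarly $df$ and $\nabla^{\End(E)}\theta$ are supported where things are controlled. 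Thus, after these manipulations, one obtains on $L^2(\Y,\E)_\pi$ an inequality of the form
\[ (\scr{D}_t^2)_\pi \ge (\st{D}^2)_\pi + c\bigl(f + 4\pi \pair{\mu}{v}\bigr)_\pi - C_\pi \]
for constants $c>0$, $C_\pi\ge 0$ depending on $t$ and $\pi$.

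By Lemma \ref{lem:MuVPairing}, $V := c(f+4\pi\pair{\mu}{v}) - C_\pi$ is proper and bounded below. Applying Proposition \ref{prop:CompactnessBoundedBelow} with $\st{D}$ the Dirac operator (essentially self-adjoint Dirac-type, by Theorem \ref{thm:FredholmI}'s setup and completeness), $V$ as above, and $T_\pi = (\scr{D}_t^2+1)_\pi$, we conclude $(\scr{D}_t^2+1)_\pi^{-1}$ is compact; in particular $(\scr{D}_t)_\pi$ has discrete spectrum and finite-dimensional kernel, and the coercivity estimate \eqref{eqn:Coercive} holds for $(\scr{D}_t)_\pi$ outside a compact set. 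This gives $(T,\pi)$-coercivity for each $\pi$, hence $T$-coercivity of $\scr{D}_t=\scr{D}_t^\ast$, and therefore $\scr{D}_t$ is $T$-Fredholm by the Fredholm criterion recalled in Section \ref{sec:PrelimDirac}.

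The main obstacle I anticipate is bookkeeping the cross-terms in \eqref{eqn:Bochner00} and \eqref{eqn:Bochner0} carefully enough: one must check that every term that is \emph{not} manifestly positive or bounded can be dominated by a small multiple of the genuinely coercive terms $f^2$, $t^2|v_\sY|^2$, and $t\pair{\mu}{v}$ via completion of squares, using precisely the growth hypotheses \eqref{eqn:fcnf} on $f$ and the boundedness hypotheses on $\chi$ (and hence on $v$, $dv_j$, $b$). The role of the commutation relation \eqref{eqn:CommutationRelation} is exactly what makes $\pair{\mu}{v}$ grow along $\Lambda$-translates (Lemma \ref{lem:MuVPairing}), which is the only source of coercivity in the $\Lambda$-directions at infinity; everything else is the relatively standard Tian--Zhang/Braverman-type argument adapted to the equivariant, cylindrical-end setting.
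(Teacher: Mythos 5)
Your proposal is correct and takes essentially the same route as the paper: restrict to a fixed isotypical component so the Lie-derivative term $2\i t v_j\L^{\E}_{\xi^j}$ becomes bounded, use the Bochner formula to dominate $\scr{D}_t^2$ from below by $\st{D}^2+V$ with $V$ built from $t^2|v_{\sY}|^2$, $4\pi t\pair{\mu}{v}$ and $(1+t)^2f^2\theta^2$, absorb the $\c(df)\theta$ and $f\c(\nabla^{\End(E)}\theta)$ terms using the growth condition \eqref{eqn:fcnf}, get properness of $V$ from Lemma \ref{lem:MuVPairing} after splitting $\Y$ into $\Cyl_{\Q}$ and its complement, and conclude via Proposition \ref{prop:CompactnessBoundedBelow}. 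Two small imprecisions worth fixing (neither is a genuine gap): the complement of $\Cyl_{\Q}$ in $\Y$ is \emph{not} compact --- it is the $\Lambda$-saturation of a compact subset of $Y$, which is exactly the situation Lemma \ref{lem:MuVPairing} handles --- and the terms $(1+t)\c(df)\theta$, $(1+t)f\c(\nabla^{\End(E)}\theta)$ do not vanish on $\Cyl_{\Q}$, nor is your completion-of-squares error $\tfrac{(1+t)^2}{2}|df|^2f^{-2}$ bounded for general $f$ satisfying \eqref{eqn:fcnf}; it is only $o(f^2)$, but that suffices since it is absorbed by the retained $f^2\theta^2=f^2$ term on $\Cyl_{\Q}$, which is precisely what the paper's $f^{-2}(f+|f^\prime|)\to 0$ bookkeeping accomplishes.
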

\begin{remark}
The case $t=0$, stated in Theorem \ref{thm:FredholmI}, was proved in \cite{LSQuantLG} using somewhat different methods from the proof for $t>0$ below.  We will make use of some positivity in the proof, so it is not clear what happens when $t<0$.  Note that, in contrast to the space of bounded Fredholm operators, the space of bounded $T$-Fredholm operators is \emph{not} open in the norm topology, so it is possible that $\scr{D}_t$ is not $T$-Fredholm for any $t<0$.  
\end{remark}
\begin{proof}
Fix $t>0$ and $\lambda \in \Lambda^\ast \simeq \Irr(T)$.  We will show that $(\scr{D}_t^2+1)^{-1}_\lambda$ is compact hence $(\scr{D}_t)_\lambda$ has discrete spectrum, and a fortiori $(\scr{D}_t)_\lambda$ is Fredholm.  By Proposition \ref{prop:CompactnessBoundedBelow} it suffices to prove an inequality of the form
\begin{equation} 
\label{eqn:FredholmDeformation1}
(\scr{D}_t^2)_\lambda \ge (\st{D}^2+V)_\lambda
\end{equation}
for some $T$-invariant continuous potential $V$ which is proper and bounded below.  

We find a suitable $V$ using equation \eqref{eqn:Bochner0}.  Notice that $|v_{\sY}|$, $|\pair{\mu_E}{v}|$, $|\theta|$, $|\nabla^{\End(E)}\theta|$ are all bounded globally on $\Y$, using a combination of the facts that (1) $v$ is bounded, (2) the metric $g$, as well as the sections $\mu_E$, $\theta$ are $\Lambda$-invariant and are constant in the $x$-direction on $\Cyl_{\Q}$.  We conclude that equation \eqref{eqn:Bochner0} takes the form
\begin{equation}
\label{eqn:RoughBochner}
\scr{D}_t^2=\st{D}^2+t^2|v_{\sY}|^2+(1+t)^2f^2\theta^2+4\pi t\pair{\mu}{v}+(1+t)fb_1+(1+t)\c(df)b_2 +2\i t v_j\L_{\xi^j}^{\E}
\end{equation}
where $b_1$, $b_2$ are bounded (uniformly in $t$) sections of $\End(\E)$; note here we have taken advantage of the fact that $f \ge 1$ to hide the terms in \eqref{eqn:Bochner0} containing $b$, $\pair{\mu_E}{v}$ inside $b_1$.

As in Tian-Zhang \cite{TianZhang}, a key observation is that, restricted to the $\lambda$-isotypic component, the operators $\L^{\E}_{\xi^j}$ in \eqref{eqn:Bochner0} become \emph{bounded} (in fact since $T$ is abelian, they restrict to multiplication operators by a constant), so $2v_j\L^{\E}_{\xi^j}$ is bounded by a constant $c_\lambda$ (the supremum over $y \in \Y$ of $2|v_j(y)\cdot 2\pi \pair{\lambda}{\xi^j}|=4\pi |\pair{\lambda}{v(y)}|$).  

Since $\theta$ is self-adjoint, $\theta^2(y)$ is a non-negative endomorphism of $E$ for each $y \in \Y$; let $\vartheta^2(y)$ be its smallest eigenvalue.  Thus $\vartheta^2 \colon \Y \rightarrow [0,\infty)$ is a continuous $T\times \Lambda$-invariant function, equal to $1$ on $\Cyl_{\Q}$.

Define a potential
\[ V=t^2|v_{\sY}|^2+4\pi t\pair{\mu}{v}-tc_\lambda+(1+t)^2f^2\vartheta^2-(1+t)f|b_1|-(1+t)|df|\cdot|b_2|. \]
Using equation \eqref{eqn:RoughBochner}, $V$ satisfies \eqref{eqn:FredholmDeformation1}.  It is clear that $V$ is $T$-invariant, continuous.  Note that $|df|$ is bounded on $\Y \setminus \Cyl_{\Q}$, whereas on $\Cyl_{\Q}$, $|df|=|f^\prime(x)|$.  Since $b_1,b_2$ are bounded, it follows that there is a lower bound of the form
\begin{equation} 
\label{eqn:VLower}
V \ge t^2|v_{\sY}|^2+4\pi t\pair{\mu}{v}-tc_\lambda+(1+t)f^2\big((1+t)\vartheta^2-c f^{-2}(f+|f^\prime|)\big),
\end{equation}
for some constant $c$.

The growth condition \eqref{eqn:fcnf} for $f$ implies we can find $s_0>0$ such that for $s>s_0$
\begin{equation} 
\label{eqn:xlambda}
f(s)^{-2}(f(s)+|f^\prime(s)|)<\tfrac{1}{2}c^{-1}.
\end{equation}
Let $\calK \subset \Y$ be the subset where $x\le s_0$.  We consider $V$ on each of the subsets $\calK$ and $\ol{\Y \setminus \calK}$.  On $\calK$ the function $t\pair{\mu}{v}$ is proper and bounded below by Lemma \ref{lem:MuVPairing}, while the other terms in \eqref{eqn:VLower} are bounded.  Thus $V|_{\calK}$ is proper and bounded below.

Note that $\Y \setminus \calK \subset \Cyl_{\Q}$.  Since $\vartheta|_{\Cyl_{\Q}}=1$ and using \eqref{eqn:xlambda}, we obtain the simpler lower bound
\begin{equation}
\label{eqn:VLower2}
V\big|_{\Y \setminus \calK} \ge t^2|v_{\sY}|^2+4\pi t\pair{\mu}{v}-tc_\lambda+\tfrac{1}{2}(1+t)f^2,
\end{equation}
which holds on $\Y \setminus \calK$.  By Lemma \ref{lem:MuVPairing}, the function $\pair{\mu}{v}+f$ is proper and bounded below on $\Y$, and this easily implies that the right hand side of \eqref{eqn:VLower2} is proper and bounded below.
\end{proof}

\subsection{Continuity of the index.}
Let $H$ be a Hilbert space and let $a_0$, $a$ be unbounded self-adjoint operators such that $\dom(a_0)\cap \dom(a)$ is dense.  Suppose the family of operators
\[ a_t=a_0+ta, \qquad t \ge 0 \]
is essentially self-adjoint.  The \emph{bounded transform} of $a_t$ is the bounded self-adjoint operator $b(a_t)$, where $b(r)=r(1+r^2)^{-1/2}$.  It is convenient to use the following criterion adapted from Nicolaescu \cite[Proposition 1.6]{NicolaescuFredholm}.
\begin{lemma}[\cite{NicolaescuFredholm}]
\label{lem:criterion}
Let $a_t=a_0+ta$, $t \ge 0$ be a family of unbounded self-adjoint operators, as above.  Suppose that for each $t \ge 0$ the following conditions hold:
\begin{enumerate}
\item $a_t$ has a gap in its spectrum.
\item $\dom(a_t) \subset \dom(a)$
\item $a^2 \le C(a_t^2+C^\prime)$ for some $C,C^\prime >0$.
\end{enumerate}
Then the family of bounded transforms $t \mapsto b(a_t)$ is norm-continuous.
\end{lemma}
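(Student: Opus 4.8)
The plan is to follow Nicolaescu \cite{NicolaescuFredholm}; we must show $t\mapsto b(a_t)=a_t(1+a_t^2)^{-1/2}$ is norm-continuous, and it is enough to prove continuity at a fixed $t_0\ge 0$. One input we extract from condition (c) is the uniform bound $\|a(a_t^2+C')^{-1/2}\|\le\sqrt{C}$; a second, subtler use of (c) appears at the end. Condition (b) is used only to ensure that $a$ is defined on $\dom(a_t)$, so that the algebraic identities below are legitimate. First I would show that $t\mapsto(a_t-z)^{-1}$ is norm-continuous—indeed locally Lipschitz on $[0,\infty)$—for every $z\in\bC\setminus\bR$, and, using condition (a) and again the resolvent identity, also for every real $z$ lying in a common spectral gap of the operators $a_t$ with $t$ near $t_0$. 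This is immediate from
\[ (a_t-z)^{-1}-(a_s-z)^{-1}=-(t-s)\,(a_t-z)^{-1}\,a\,(a_s-z)^{-1} \]
and the factorisation $a(a_s-z)^{-1}=\big(a(a_s^2+C')^{-1/2}\big)\big((a_s^2+C')^{1/2}(a_s-z)^{-1}\big)$, whose first factor has norm $\le\sqrt{C}$ and whose second is a bounded function of $a_s$ controlled by $z$ and $C'$.

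From norm-continuity of the resolvents near $t_0$ one reads off a package of auxiliary facts: $(1+a_t^2)^{-1}=(a_t-i)^{-1}(a_t+i)^{-1}$ and $a_t(1+a_t^2)^{-1}=\tfrac12\big((a_t-i)^{-1}+(a_t+i)^{-1}\big)$ are norm-continuous; $f(a_t)$ is norm-continuous for every $f\in C_0(\bR)$, hence for every bounded continuous $f$ with $f(+\infty)=f(-\infty)$; and, using norm-continuity of the modulus map $T\mapsto(T^{*}T)^{1/2}$, so are $(1+a_t^2)^{-1/2}=|(a_t+i)^{-1}|$ and therefore $|b(a_t)|=\big(1-(1+a_t^2)^{-1}\big)^{1/2}$.

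These do not yet give continuity of $b(a_t)$ itself, because $b(r)\to\pm1$ as $r\to\pm\infty$, so $b$ is not a function of the resolvent. Now condition (a) is used: fix $\rho>0$ with $\pm\rho\notin\sigma(a_{t_0})$; by the resolvent-continuity at real points, $\pm\rho\notin\sigma(a_t)$ for $t$ near $t_0$, so the spectral projections $\Pi_t:=\chi_{\{|r|>\rho\}}(a_t)=\chi_{[0,(1+\rho^2)^{-1})}\big((1+a_t^2)^{-1}\big)$ and $P_t:=\chi_{(\rho,\infty)}(a_t)-\chi_{(-\infty,-\rho)}(a_t)$ are defined there, and $\Pi_t$ is norm-continuous near $t_0$ (spectral projection of a norm-continuous bounded family at a point off the spectrum). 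Splitting off $b\,\mathbf 1_{\{|r|\le\rho\}}$—which is supported away from $\sigma(a_t)$ near its jumps and so contributes a norm-continuous compactly-supported functional calculus term—and using $b(r)=\sgn(r)|b(r)|$, we obtain
\[ b(a_t)=\big(b\,\mathbf 1_{\{|r|>\rho\}}\big)(a_t)+\text{(norm-continuous)}=P_t\,|b(a_t)|+\text{(norm-continuous)}. \]
Since $|b(a_t)|$ is norm-continuous, the whole statement reduces to the norm-continuity of $P_t$ near $t_0$.

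This last reduction is the main obstacle, and it is exactly here that the \emph{self-referential} form of condition (c)—the estimate $a^2\le C(a_t^2+C')$, with the deformed operator $a_t$ on the right—does the work. It implies that when $|t-s|$ is small relative to $1/\sqrt{C}$ the perturbation $(t-s)a$ cannot carry a vector from the spectral subspace $\{|a_s|\ge\rho\}$ across the gap $(-\rho,\rho)$: on a high-energy vector the size of $(t-s)a\,v$ is at most a constant strictly less than $1$ times the energy of $v$, so the levels $\pm\rho$ cannot be crossed `from infinity'. Together with the norm-continuity of $\Pi_t$ already established, this is used to show that $\|P_t-P_{t_0}\|\to 0$ as $t\to t_0$. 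I expect the careful implementation—uniformly controlling the unbounded high-energy part of the perturbation—to be the technical core, and it is precisely the estimate of \cite[Proposition~1.6]{NicolaescuFredholm}.
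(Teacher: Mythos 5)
The paper does not actually prove this lemma---it is quoted, in adapted form, from Nicolaescu \cite{NicolaescuFredholm}---so your proposal has to stand on its own, and as written it does not. The preparatory steps are fine: conditions (b), (c) give $\|a(a_s^2+C')^{-1/2}\|\le\sqrt{C}$, hence norm-continuity (indeed local Lipschitz continuity) of the resolvents $t\mapsto (a_t-z)^{-1}$, and therefore norm-continuity of $f(a_t)$ for every continuous $f$ with equal limits at $\pm\infty$; and you correctly isolate the real issue, namely that $b$ has different limits at $\pm\infty$. But the statement you reduce to---norm-continuity of the half-line spectral projection $P_t$ (equivalently of $\sgn(a_t-c)$ for $c$ in a common gap)---is not a smaller problem: modulo the easy functional-calculus terms it \emph{is} the lemma, being exactly the difference between gap-continuity (which you have established) and Riesz-continuity (which is to be proved). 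At this decisive point your argument offers only the heuristic that $(t-s)a$ is small relative to the energy on high-energy vectors, and then explicitly defers ``the careful implementation'' to \cite[Proposition 1.6]{NicolaescuFredholm}, i.e.\ to the very result being proved; no estimate on $\|P_t-P_{t_0}\|$ is produced. That is the genuine gap. Note also that the failure mode a real proof must exclude is spectral flow through $\pm\infty$, not crossing of the finite gap (the latter is already ruled out by the uniform resolvent bound at the cut point); one workable route is the formula $\sgn(A)=\tfrac{2}{\pi}\int_0^\infty A(A^2+s^2)^{-1}\,ds$ applied to $A=a_t-c$, where the naive bounds on the integrand difference are only $O(1/s)$ and condition (c) must be exploited more carefully to get convergence---this is where the actual content of the lemma lives.

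A smaller but genuine flaw: hypothesis (a) provides \emph{one} gap somewhere in $\bR$; it does not provide $\rho>0$ with both $\pm\rho\notin\sigma(a_{t_0})$, which your construction of $\Pi_t=\chi_{\{|r|>\rho\}}(a_t)$ as a spectral projection of $(1+a_t^2)^{-1}$ requires, since $\lambda\mapsto(1+\lambda^2)^{-1}$ identifies $+\rho$ with $-\rho$. This can be repaired by cutting at a single gap point $c$: using the uniform gap around $c$ for $t$ near $t_0$ one writes $b$ as a function with equal limits at $\pm\infty$ (after modification inside the gap, which does not change the operators) plus $2\chi_{(c,\infty)}-1$. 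But then the symmetric projection $\Pi_t$ plays no role, and the reduction lands directly on the unproved continuity of $\chi_{(c,\infty)}(a_t)$, i.e.\ on the missing core of the argument.
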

\begin{remark}
\label{rem:CommonCore}
The third condition in Lemma \ref{lem:criterion} implies $\|a \xi \| \le C^{\prime \prime}\big(\|a_t\xi\|+\|\xi\|\big)$ for some $C^{\prime \prime}>0$ and all $\xi \in \dom(a_t)$.  If the operators $a_t$ have a common core, then the estimate $(c)$ (verified on elements of the common core) implies $\dom(a_t) \subset \dom(a)$.  If the operators $a_t$ are Fredholm, then $0$ is an isolated point of the spectrum, hence in particular $a_t$ has a gap in its spectrum.  Thus in this special case the criterion amounts to proving the estimate $(c)$.  If $H$ is $\bZ_2$ graded and the $a_t$ are odd, then $b(a_t)$ is an odd self-adjoint Fredholm operator with the same index as $a_t$.  Norm-continuity of the bounded transform therefore implies $\index(a_t)$ is independent of $t$.
\end{remark}

\begin{proposition}
\label{prop:CtsTFredholm2}
For $\lambda \in \Irr(T)$ fixed, the family of bounded operators $t \mapsto b(\scr{D}_t)_\lambda$, $t \ge 0$ is norm-continuous.  Consequently $\index(\scr{D}_t)=\index(\scr{D}) \in R^{-\infty}(T)$.
\end{proposition}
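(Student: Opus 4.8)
The plan is to apply Nicolaescu's criterion (Lemma~\ref{lem:criterion}) to the family $a_t:=(\scr{D}_t)_\lambda$ on $H=L^2(\Y,\E)_\lambda$. Writing $\scr{D}_t=\scr{D}+t\,a$ with $a:=f\theta\wh{\otimes}1-\i\wh{\otimes}\c(v_{\sY})$ and $\scr{D}=\scr{D}_0$, we have $a_t=a_0+t\,a_\lambda$ with $a_0=\scr{D}_\lambda$. All the $a_t$ are essentially self-adjoint with common core $C^\infty_c(\Y,\E)_\lambda$ — for $t=0$ by Theorem~\ref{thm:FredholmI}/\cite{LSQuantLG}, and for $t>0$ by the discreteness of the spectrum of $(\scr{D}_t)_\lambda$ established inside the proof of Proposition~\ref{prop:CtsTFredholm}. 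By Remark~\ref{rem:CommonCore} it then suffices to check, for each $t\ge0$, that (i) $a_t$ has a spectral gap and (ii) $a_\lambda^2\le C(a_t^2+C')$ (which, once verified on the common core, also supplies hypothesis (2) of Lemma~\ref{lem:criterion}). Point (i) is immediate: $\scr{D}_t$ is $T$-Fredholm for every $t\ge0$ (Theorem~\ref{thm:FredholmI} for $t=0$, Proposition~\ref{prop:CtsTFredholm} for $t>0$), so $a_t$ is Fredholm and hence $0$ is an isolated point of its spectrum.

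Estimate (ii) is the substance, and it is obtained by re-running the Bochner-formula estimates from the proof of Proposition~\ref{prop:CtsTFredholm}. Since $f\theta$ and $-\i\c(v_{\sY})$ are self-adjoint, one has the operator inequality $a^2\le 2f^2\theta^2+2|v_{\sY}|^2$ as quadratic forms, and $|v_{\sY}|$ is bounded (as $v$ is bounded), so $a_\lambda^2\le 2f^2\theta^2+C_1$; it remains to dominate the potential $f^2\theta^2$ by $a_t^2$ on the $\lambda$-isotypic component. Restricting the Bochner formula \eqref{eqn:RoughBochner} to that component — so that $2\i t\,v_j\L^{\E}_{\xi^j}$ becomes multiplication by a function bounded in absolute value by $tc_\lambda$ — and arguing exactly as in the derivation of \eqref{eqn:VLower}, one obtains $(a_t^2\xi,\xi)\ge(\st{D}^2\xi,\xi)+c_t(f^2\theta^2\xi,\xi)-C_t\|\xi\|^2$ for $\xi$ in the core, with $c_t>0$ (one may take $c_t=\tfrac14(1+t)^2$) and $C_t<\infty$. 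Here the key point is that the a priori unbounded-below cross term $4\pi t\pair{\mu}{v}$ is harmless: on the part of $\Y$ where $f$ is large it is absorbed by $(1+t)^2f^2\theta^2$, because $\pair{\mu}{v}\ge -f-\textnormal{const}$ by Lemma~\ref{lem:MuVPairing} (which in turn rests on the commutation relation \eqref{eqn:CommutationRelation}), while on the complement — which is compact modulo $\Lambda$ — it is bounded below, again by Lemma~\ref{lem:MuVPairing}; the remaining error terms $(1+t)fb_1$ and $(1+t)\c(df)b_2$ are absorbed into $\tfrac14(1+t)^2f^2\theta^2+C_t$ using the growth conditions \eqref{eqn:fcnf} on $f$. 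Dropping $(\st{D}^2\xi,\xi)\ge0$ yields $f^2\theta^2\le c_t^{-1}(a_t^2+C_t)$, hence $a_\lambda^2\le 2c_t^{-1}a_t^2+C''$, which is hypothesis (3).

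With (i) and (ii) established, Lemma~\ref{lem:criterion} shows that $t\mapsto b(a_t)=b(\scr{D}_t)_\lambda$ is norm-continuous on $[0,\infty)$. Each $b(\scr{D}_t)_\lambda$ is an odd self-adjoint Fredholm operator with $\index(b(\scr{D}_t)_\lambda)=\index((\scr{D}_t)_\lambda)$, and the Fredholm index is invariant under norm-continuous deformation, so $\index((\scr{D}_t)_\lambda)=\index(\scr{D}_\lambda)$ for every $\lambda\in\Irr(T)$; summing over $\lambda$ gives $\index(\scr{D}_t)=\index(\scr{D})$ in $R^{-\infty}(T)$. The main obstacle is the verification of (ii): one must confirm that in the Bochner formula the genuinely unbounded piece of $a^2$, essentially $f^2\theta^2$, is controlled by $\scr{D}_t^2$ on each isotypic component, the one delicate point being that the cross term $\pair{\mu}{v}$ — which is not bounded below on all of $\Y$ — becomes harmless only after invoking the properness and lower-boundedness statement of Lemma~\ref{lem:MuVPairing}.
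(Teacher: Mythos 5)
Your proposal is correct and follows essentially the same route as the paper: Nicolaescu's criterion via Remark \ref{rem:CommonCore}, with the key estimate extracted from the Bochner formula \eqref{eqn:RoughBochner} and Lemma \ref{lem:MuVPairing}, splitting $\Y$ into the set $\calK$ (compact mod $\Lambda$) and the cylindrical region where $\theta^2=\id$. The only cosmetic difference is that you dominate $f^2\theta^2$ by $\scr{D}_t^2$ on the $\lambda$-isotypic component, whereas the paper uses $\|\theta\|\le 1$ to reduce to the slightly stronger bound $f^2\le C\big((\scr{D}_t^2)_\lambda+C'\big)$; both suffice.
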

\begin{proof}
Fix an isotypical component $\lambda \in \Irr(T)$.  We will prove the proposition by applying Lemma \ref{lem:criterion} and Remark \ref{rem:CommonCore} to the family of odd self-adjoint operators $a_t=(\scr{D}_t)_\lambda$ (these have a common core consisting of smooth compactly supported sections in the $\lambda$-isotypical subspace).  The operator $a=(f\theta-\i \c(v_{\sY}))_\lambda$.  Since $\c(v_{\sY})$ is bounded we can ignore this term.  Likewise since $\|\theta \| \le 1$ we can replace $\theta$ with $1$.  Thus it suffices to prove that there are constants $C,C^\prime>0$ such that
\begin{equation} 
\label{eqn:Inequality0}
f^2 \le C\big((\scr{D}_t^2)_\lambda+C^\prime \big).
\end{equation}
Using inequality \eqref{eqn:FredholmDeformation1} this amounts to showing
\begin{equation} 
\label{eqn:Inequality}
f^2 \le C(V+C^\prime).
\end{equation}
It is convenient to make use of the subset $\calK \subset \Y$ introduced in the proof of Proposition \ref{prop:CtsTFredholm}.  Since $V$ is bounded below, while $f$ is bounded on $\calK$, it is easy to ensure \eqref{eqn:Inequality} holds on $\calK$ by taking $C^\prime \gg 0$.  On $\Y \setminus \calK$, using the lower bound \eqref{eqn:VLower2}, inequality \eqref{eqn:Inequality} would follow from
\[ f^2 \le C\big(4\pi t\pair{\mu}{v}-tc_\lambda+\tfrac{1}{2}(1+t)f^2+C^\prime \big).\]
As $\pair{\mu}{v}$ is bounded below, by taking $C^\prime \gg 0$ we can ensure $4\pi t\pair{\mu}{v}+C^\prime >tc_\lambda$.  Then taking $C>2(1+t)^{-1}$ gives the result.
\end{proof}

\begin{remark}
\label{rem:GapTopology}
In fact to just show invariance of the index, it is enough to verify the weaker result that the family of resolvents $t \mapsto (\scr{D}_t\pm \i)_\lambda^{-1}$ is norm-continuous, or equivalently that $t \mapsto (\scr{D}_t)_\lambda$ is continuous in the `gap topology' on the space of closed unbounded operators (see Kato \cite[Theorem IV.2.23]{Kato}).  That this is sufficient is an older result of Cordes and Labrousse \cite{CordesLabrousse} (see Kato \cite[IV.5.17]{Kato}).  To prove norm-continuity of the resolvents, one can use
\[ (\scr{D}_t\pm \i)^{-1}-(\scr{D}_s\pm \i)^{-1}=(\scr{D}_t\pm \i)^{-1}(\scr{D}_s-\scr{D}_t)(\scr{D}_s\pm \i)^{-1}=(s-t)(\scr{D}_t\pm \i)^{-1}(f\theta-\i \c(v_{\sY}))(\scr{D}_s\pm \i)^{-1}, \]
and so it is enough to prove that $(\scr{D}_t\pm \i)_\lambda^{-1} f\theta$ is bounded.  The latter follows from inequality \eqref{eqn:Inequality0} proved in Proposition \ref{prop:CtsTFredholm2}.
\end{remark}

\begin{remark}
Here is a slightly different perspective on Proposition \ref{prop:CtsTFredholm2}, which avoids using Lemma \ref{lem:criterion} in favour of Hilbert $C^\ast$-module methods.  Let $\ul{H}=C([0,1],H)$ be the Hilbert $C[0,1]$-module consisting of continuous functions $[0,1] \rightarrow H$.  The family of self-adjoint operators $\underline{\scr{D}}=(\scr{D}_t)_{t \in [0,1]}$ defines an unbounded self-adjoint operator on $\ul{H}$ (this follows because the coefficients of $\scr{D}_t$ vary continuously, in fact smoothly).  A `localization' theorem of Pierrot \cite[Theorem 1.18]{PierrotLocalization} (see also \cite{KaadLeschLocalization}) implies $\underline{\scr{D}}$ defines a \emph{regular} self-adjoint operator on $\ul{H}$; this means functional calculus for unbounded operators on Hilbert $C^\ast$-modules is available, hence $\ul{F}=b(\underline{\scr{D}})$ is a bounded operator on $\ul{H}$.  To show that the index of $\scr{D}_t$ is independent of $t \in [0,1]$, it suffices to show that the pair $(\ul{H},\ul{F})$ defines a $\KK$-theory homotopy, i.e. a $\KK$-theory cycle for the pair of $C^\ast$-algebras $(C^\ast(T),C[0,1])$, cf. \cite{HigsonPrimer, KasparovNovikov}.  This amounts to showing that for each $\lambda \in \Irr(T)$, the resolvents $t \mapsto (\scr{D}_t \pm \i)^{-1}_\lambda$ form a norm-continuous family of compact operators (cf. \cite{HigsonPrimer}), and we explained this already in Remark \ref{rem:GapTopology}.  This is a weaker result than Proposition \ref{prop:CtsTFredholm2}, but still implies constancy of the index.
\end{remark}

\section{The Ma-Zhang-type index formula}\label{sec:MaZhangType}
Our goal in this section is to `break up' the index of $\scr{D}$ into contributions from each component of the vanishing locus $Z$.  The basic tool we use for this is the splitting theorem for elliptic boundary value problems, see Theorem \ref{thm:Split}.  Using methods of Ma and Zhang \cite{MaZhangTransEll}, we obtain a formula for $\index(\scr{D}) \in R^{-\infty}(T)$ as an infinite (but locally finite) sum of contributions labelled by the components $Z_\beta$ of the vanishing locus.  Each contribution can be described as the `limit', as the parameter $t \rightarrow \infty$, of the index of an Atiyah-Patodi-Singer (APS) \cite{AtiyahPatodiSingerI} boundary value problem $(\scr{D}_t,B_t)$ on a compact neighborhood $U_\beta$ of $Z_\beta \cap \X$, where $\X=(\phi^{\g/\t})^{-1}(0) \subset \Y$.

\subsection{The splitting theorem and $\scr{D}_t$.}
For each $\beta \in W \cdot \B$, choose a small closed ball $B_\beta \subset \t$ centered on $\beta$, such that $B_\beta \cap B_\gamma =\emptyset$ for $\beta \ne \gamma$.  Recall that the intersection $Z_\beta \cap (\Y \setminus \Cyl_{\Q})=(\Y \setminus \Cyl_{\Q})^\beta \cap \phi^{-1}(\beta)$ is compact.  Let $U_\beta \subset \phi^{-1}(B_\beta)$ be a compact $T$-invariant neighborhood of $Z_\beta \cap (\Y \setminus \Cyl_{\Q})$ in $\Y$ such that $U_\beta$ is also a manifold with smooth boundary $\partial U_\beta = N_\beta$.  By construction, for each $y \in N_\beta$ either $|v_{\sY}(y)|>0$ or $\theta^2(y)=1$.  Fix a regular value $R>0$ of the function $|\phi | \colon \Y \rightarrow [0,\infty)$, and let
\[ U_R=\bigcup_{|\beta|<R} U_\beta, \qquad N_R=\partial U_R, \qquad W_R=\ol{\Y \setminus U_R}.\]

For $t>0$ the Dirac-type operator $\scr{D}_t$ is $T$-coercive (see Proposition \ref{prop:CtsTFredholm}).  Choosing an adapted boundary operator $A_t^+$ for $\scr{D}_t^+\upharpoonright U_R$, we can apply the splitting theorem:
\begin{equation} 
\label{eqn:SplitD}
\index(\scr{D})=\index(\scr{D}_t)=\index(\scr{D}^+_t\upharpoonright U_R,B_{<0}(A_t^+))+\index(\scr{D}^+_t\upharpoonright W_R,B_{\le 0}(-A_t^+)).
\end{equation}
\begin{remark}
\label{rem:SumIndependent}
Note that in equation \eqref{eqn:SplitD} we are allowing the boundary condition to depend on the parameter $t$.  Thus the two summands on the right hand side are \emph{not} independent of $t$, although their sum is, by Proposition \ref{prop:CtsTFredholm2}.  One should compare the approach in Ma-Zhang \cite{MaZhangTransEll}, which involves a similar boundary condition depending on $t$.
\end{remark}

Equation \eqref{eqn:SplitD} is really an infinite collection of equations, one for each allowed choice of $A_t^+$.  For results below it is convenient to choose a particular adapted boundary operator.  Let $\nu$ be an inward unit normal vector for $U_R$ along $N_R$.  For the Dirac operator $\st{D}$, we will use a \emph{canonical boundary operator} $A$ given along $N_R$ by the expression
\begin{equation} 
\label{eqn:CanonicalBoundary}
A=\sigma_{\st{D}}(\nu)^{-1}\st{D}-\nabla^{\E}_{\nu}+\tfrac{\dim(N_R)}{2}h
\end{equation}
where $\sigma_{\st{D}}(\nu)^{-1}=-\c(\nu)$ and $h$ is the mean curvature of $N_R$, cf. Gilkey \cite[p.142]{GilkeyBdOp}, B{\"a}r and Ballmann \cite[Appendix A]{BarBallmannGuide}.  A useful property of this choice is that $A$ anti-commutes with $\c(\nu)$:
\[ A \c(\nu)=-\c(\nu)A. \]
Let $A^+$ (resp. $A^-$) denote the restriction of $A$ to sections of $\E^+$ (resp. $\E^-$), thus $A^-\c(\nu)=-\c(\nu)A^+$.  The operators $A^{\pm}$ are essentially self-adjoint (cf. the calculation on p. 25 of \cite{BarBallmannGuide}), hence $A^+$ is an adapted boundary operator for $\st{D}^+$.  

To obtain a `canonical' boundary operator $A_t$ for $\scr{D}_t$, take the expression for $\sigma_{\scr{D}_t}(\nu)^{-1}\scr{D}_t$ along $N_R$ and simply replace $\sigma_{\st{D}}(\nu)^{-1}\st{D}$ with $A$; thus
\[ A_t=A+\i t \c(\nu)\c(v_{\sY})-(1+t)\c(\nu)f\theta.\]
Since $N_R$ is $T$-invariant and $v_{\sY}$ lies in the tangent distribution to the orbits, $\c(\nu)$, $\c(v_{\sY})$ anti-commute.  The operators $A_t^{\pm}$ are again essentially self-adjoint, hence $A_t^+$ is an adapted boundary operator for $\scr{D}_t^+\upharpoonright U_R$, and defines an Atiyah-Patodi-Singer boundary problem $(\scr{D}_t^+,B_{<0}(A_t^+))$.  Since 
\[ -\sigma_{\scr{D}_t}(\nu) \circ A_t^+ \circ \sigma_{\scr{D}_t}(\nu)^{-1}=\c(\nu) A_t^+ \c(\nu)=A_t^-, \]
the Hilbert space adjoint is $(\scr{D}_t^-,B_{\le 0}(A^-_t))$.

\subsection{Dependence of the APS index on $t$.}
\begin{proposition}[cf. \cite{MaZhangTransEll}, Proposition 1.1]
\label{prop:InvertBoundary}
Fix $\lambda \in \Irr(T)\simeq \Lambda^\ast$.  For $t\gg 0$, $(A_t)_\lambda$ is invertible.
\end{proposition}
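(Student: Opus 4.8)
The plan is to prove a spectral lower bound $\big((A_t)_\lambda\big)^2 \ge c_t$ with $c_t \to +\infty$ as $t \to \infty$; since $(A_t)_\lambda$ is (essentially) self-adjoint on the compact manifold $N_R$, such a bound keeps $0$ out of the spectrum and makes $(A_t)_\lambda$ invertible once $t$ is large. Write $A_t = A + R_t$, where $R_t = \c(\nu)M_t$ with $M_t = \i t\,\c(v_{\sY}) - (1+t)f\theta$; this $R_t$ is a bounded self-adjoint bundle endomorphism of $\E|_{N_R}$. Because $A_t - A = R_t$ is an endomorphism rather than a differential operator, the computation of $A_t^2$ involves no Lie-derivative terms and is purely algebraic over $A^2$:
\[ A_t^2 = A^2 + \{A, R_t\} + R_t^2. \]
I would bound the three summands in turn. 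First, $A^2 \ge 0$ since $A$ is self-adjoint. Second, $\c(\nu)$ anti-commutes with $\c(v_{\sY})$ (as $v_{\sY}$ is tangent to the $T$-orbits, hence to $N_R$, so $v_{\sY}\perp\nu$) and with $\theta$ (opposite parities on the two factors of $\E = E\wh{\otimes}S$), so $R_t^2 = M_t^2$ using also $\c(\nu)^2 = -\id$; and since $\c(v_{\sY})$ and $\theta$ also anti-commute, the cross-term in $M_t^2$ vanishes, leaving
\[ R_t^2 = t^2|v_{\sY}|^2 + (1+t)^2 f^2\theta^2 \ge 0. \]
Third, using $A\c(\nu) = -\c(\nu)A$ one finds $\{A, R_t\} = -\c(\nu)[A, M_t]$, where $[A, M_t]$ is a zeroth-order operator whose $C^1$-coefficients on the compact hypersurface $N_R$ are affine in $t$; hence $\{A, R_t\}$ is a bounded bundle endomorphism with $\|\{A, R_t\}\| \le C(1+t)$ for some $C$ independent of $t$ and $\lambda$.

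The geometric input is the property, built into the choice of the $U_\beta$, that for every $y \in N_R$ either $|v_{\sY}(y)| > 0$ or $\theta^2(y) = \id$. Letting $\vartheta^2(y)$ denote the smallest eigenvalue of $\theta^2(y)$ --- a continuous function equal to $1$ on $\Cyl_{\Q}$ --- the continuous function $|v_{\sY}|^2 + \vartheta^2$ is then strictly positive on $N_R$, so bounded below by some $\delta > 0$ by compactness. Using $f \ge 1$, $\theta^2 \ge \vartheta^2\,\id$ and $(1+t)^2 \ge t^2$, the estimate on $R_t^2$ improves to $R_t^2 \ge t^2\big(|v_{\sY}|^2 + \vartheta^2\big) \ge \delta t^2$ as operators. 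Combining, on the $\lambda$-isotypical subspace of $L^2(N_R, \E)$ and for smooth $u$,
\[ \|(A_t)_\lambda u\|^2 = (A^2 u, u) + (\{A, R_t\}u, u) + (R_t^2 u, u) \ge \big(\delta t^2 - C(1+t)\big)\|u\|^2, \]
which is $\ge \|u\|^2$ once $t$ is large. By density the estimate holds on all of $\dom((A_t)_\lambda)$, so $(A_t)_\lambda$ is injective with closed range; being self-adjoint it is therefore onto, hence invertible.

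I expect no serious obstacle: this is a one-dimension-lower counterpart of the Bochner estimate underlying Proposition \ref{prop:CtsTFredholm}, and is in fact cleaner --- and uniform in $\lambda$ --- because the deformation term $R_t$ is a bundle endomorphism. The only delicate point is the bookkeeping of which of the three pairs among $\c(\nu)$, $\c(v_{\sY})$, $\theta$ anti-commute (all three do, for the reasons noted), since that is precisely what makes $R_t^2$ a clean non-negative potential of size $\sim t^2$ while keeping the error $\{A, R_t\}$ only of order $t$.
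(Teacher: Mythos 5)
Your overall strategy is the same as the paper's: square the boundary operator, use $\{A,\c(\nu)\}=0$ to isolate a positive term $t^2|v_{\sY}|^2+(1+t)^2f^2\theta^2$ which is bounded below by $\delta t^2$ along $N_R$ (thanks to the defining property of $U_\beta$ that at each boundary point either $|v_{\sY}|>0$ or $\theta^2=\id$), and show the cross-term grows only linearly in $t$.  However, there is a genuine error in your treatment of the cross-term.  You claim that $[A,M_t]$ is a zeroth-order operator, hence that $\{A,R_t\}$ is a bundle endomorphism of norm $\le C(1+t)$ with $C$ independent of $\lambda$.  This fails for the piece $\i t[A,\c(v_{\sY})]$: the principal symbol of $A$ at $\xi\in T^\ast N_R$ is $-\c(\nu)\c(\xi)$, and its commutator with $\c(v_{\sY})$ is proportional to $\pair{\xi}{v_{\sY}}\c(\nu)$, which does not vanish because $v_{\sY}$ is \emph{tangent} to the $T$-invariant hypersurface $N_R$, not normal to it.  So $[A,\c(v_{\sY})]$ is a genuine first-order differential operator (its leading part is differentiation along the orbit directions, of the form $v_j\nabla^{\E}_{\xi^j_{\sY}}$ up to bounded terms), and it is unbounded on $L^2(N_R,\E)$; in particular your asserted bound uniform in $\lambda$ cannot hold.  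Only the $\theta$-piece $[A,f\theta]$ is a bounded bundle endomorphism, for the reason you give.

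The gap is fixable, and the fix is exactly what the paper does: write $\nabla^{\E}_{\xi^j_{\sY}}=\L^{\E}_{\xi^j}-2\pi\i\pair{\mu+\mu_E}{\xi^j}+\c(\nabla_\bullet\xi^j_{\sY})$; the moment-map and curvature terms are bounded on the compact hypersurface $N_R$, and on the $\lambda$-isotypical component the Lie derivative $\L^{\E}_{\xi^j}$ acts as the scalar $2\pi\i\pair{\lambda}{\xi^j}$.  Hence the cross-term, restricted to the $\lambda$-isotypical subspace, is bounded by $c_\lambda(1+t)$ for a constant depending on $\lambda$, and the estimate $(A_t)_\lambda^2\ge \delta t^2-c_\lambda(1+t)$ gives invertibility for $t\gg 0$ \emph{depending on $\lambda$} --- which is all the proposition asserts (and all that is true by this argument); the uniformity in $\lambda$ you advertise should be dropped.
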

\begin{proof}
For the proof we temporarily suspend our convention regarding graded commutators, and write $\{\cdot,\cdot \}$ for the anti-commutator, $[\cdot,\cdot]$ for the ordinary commutator.  The calculation of $A_t^2$ is similar to the Bochner formula \eqref{eqn:Bochner00}:
\begin{align*} 
A_t^2&=A^2+t^2|v_{\sY}|^2+(1+t)^2f^2\theta^2+\i t\{A,\c(\nu)\c(v_{\sY})\}-(1+t)\{A,\c(\nu)f\theta \}\\
&=A^2+t^2|v_{\sY}|^2+(1+t)^2f^2\theta^2+\i t\c(\nu)[-A,\c(v_{\sY})]+(1+t)\c(\nu)[A,f\theta]
\end{align*}
where in the second line we used $\{A,\c(\nu)\}=0$.  In terms of a local orthonormal frame $X_1=\nu$, $X_2,...,X_{\dim(\Y)}$ for $\Y$ the operator $A$ is
\[ A=-\c(\nu)\sum_{n\ge 2} \c(X_n)\nabla^{\E}_{X_n}+\tfrac{\dim(N_R)}{2}h.\]
Using that $\c(v_{\sY})$ anti-commutes with $\c(\nu)$ we have
\[ [-A,\c(v_{\sY})]=\c(\nu)\Big\{ \sum_{n\ge 2} \c(X_n)\nabla^{\E}_{X_n},\c(v_{\sY})\Big\}.\]
Arguing as in the proof of Proposition \ref{prop:CtsTFredholm} we find that the anti-commutator in this expression is bounded on the $\lambda$-isotypical component (note that the argument is simpler than Proposition \ref{prop:CtsTFredholm}, because $N_R$ is compact).  Also, since $N_R$ is compact, the commutator $[A,f\theta]$ is a bounded bundle endomorphism.  Thus
\[ A_t^2=A^2+t^2|v_{\sY}|^2+(1+t)^2f^2\theta^2+(1+t)S \]
where $S$ is an operator which is bounded on the $\lambda$-isotypical component.  The manifold $U_R$ was chosen such that at each point of $y \in N_R$, either $|v_{\sY}(y)|>0$ or $\theta^2(y)=1$; thus $|v_{\sY}|^2+f^2\theta^2$ is a strictly positive bundle endomorphism along $N_R$.  Taking $t \gg 0$ we can ensure these terms dominate, hence $(A_t)_\lambda^2$ is invertible.
\end{proof}

\begin{corollary}
\label{cor:tDepend}
Fix $\lambda \in \Irr(T)\simeq \Lambda^\ast$.  \emph{When $t$ is sufficiently large}, the summands $\index(\scr{D}^+_t\upharpoonright U_R,B_{<0}(A_t^+))_\lambda$ and $\index(\scr{D}^+_t\upharpoonright W_R,B_{\le 0}(-A_t^+))_\lambda$ on the right hand side of \eqref{eqn:SplitD} are separately independent of $t$.
\end{corollary}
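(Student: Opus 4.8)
The plan is to reduce the statement to the local constancy of one of the two summands in \eqref{eqn:SplitD} and then use Proposition~\ref{prop:CtsTFredholm2} for the other. Fix $\lambda \in \Irr(T)$. By Proposition~\ref{prop:InvertBoundary} there is $t_0>0$ such that $(A_t)_\lambda$ is invertible for every $t\ge t_0$, and inspecting that proof one in fact gets a uniform estimate: $(A_t^2)_\lambda=(A^2)_\lambda+t^2|v_{\sY}|^2+(1+t)^2f^2\theta^2+(1+t)S_t$ with $S_t$ a self-adjoint endomorphism of $L^2(N_R,\E)_\lambda$ bounded uniformly in $t$, while $|v_{\sY}|^2+f^2\theta^2$ is a strictly positive endomorphism along the compact hypersurface $N_R$ (say $\ge\varepsilon\,\id$). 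Since $(A^2)_\lambda\ge 0$ and $f\ge 1$, for $t\ge 1$ we get $(A_t^2)_\lambda\ge\big(t^2\varepsilon-(1+t)\|S_t\|\big)\id$, so after enlarging $t_0$ there is $\delta_0>0$ with $(A_t^2)_\lambda\ge\delta_0^2$, i.e.\ $(A_t^+)_\lambda$ has the uniform spectral gap $(-\delta_0,\delta_0)$ about $0$, for all $t\ge t_0$. Because $[t_0,\infty)$ is connected and $\index(\scr{D}_t)\in R^{-\infty}(T)$ is independent of $t$ for all $t>0$ by Proposition~\ref{prop:CtsTFredholm2}, it suffices to prove that $t\mapsto\index(\scr{D}^+_t\upharpoonright U_R,B_{<0}(A_t^+))_\lambda$ is locally constant on $[t_0,\infty)$: the other summand is then the ($t$-independent) total minus this one, hence constant for $t\ge t_0$ as well.

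Next I would establish continuity of the moving boundary condition. On the compact manifold $N_R$ the operators $(A_t^+)_\lambda$ form a norm-resolvent-continuous family of elliptic self-adjoint operators with compact resolvent, since $A_t^+=A^++\i t\,\c(\nu)\c(v_{\sY})-(1+t)\,\c(\nu)f\theta$ differs from $A^+$ by a bounded bundle endomorphism depending affinely on $t$. Together with the uniform gap $(-\delta_0,\delta_0)$ and continuity of the functional calculus, the spectral projections $P_t:=\mathbf{1}_{(-\infty,0)}\big((A_t^+)_\lambda\big)$ form a norm-continuous family of projections of $L^2(N_R,\E^+)_\lambda$, and $B_{<0}(A_t^+)_\lambda=P_t\,H^{1/2}(N_R,\E^+)_\lambda$. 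By Kato's construction, near any $t_1\ge t_0$ there is a norm-continuous path $t\mapsto W_t$ of unitaries of $L^2(N_R,\E^+)_\lambda$ with $W_{t_1}=\id$ and $W_tP_{t_1}W_t^{-1}=P_t$; choosing a collar $N_R\times[0,1)\hookrightarrow U_R$ and a norm-continuous path from $W_t$ to $\id$ inside the unitary group, one extends $W_t$ to a bundle automorphism $\Psi_t$ of $\E^+$ equal to $\id$ off the collar and depending norm-continuously on $t$ near $t_1$. Conjugation by $\Psi_t$ preserves the principal symbol of $\scr{D}^+_t\upharpoonright U_R$ (so the conjugate is again of Dirac type) and the Fredholm index of the realization, and it carries $\dom(\scr{D}^+_{t_1}\upharpoonright U_R,B_{<0}(A_{t_1}^+))_\lambda$ onto $\dom(\scr{D}^+_t\upharpoonright U_R,B_{<0}(A_t^+))_\lambda$.

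Thus, near $t_1$, the family $t\mapsto\Psi_t^{-1}\,\scr{D}^+_t\,\Psi_t$ consists of Fredholm operators with a \emph{common} domain, with indices $\index(\scr{D}^+_t\upharpoonright U_R,B_{<0}(A_t^+))_\lambda$, whose coefficients vary norm-continuously in $t$; the differences $\Psi_t^{-1}\scr{D}^+_t\Psi_t-\Psi_{t_1}^{-1}\scr{D}^+_{t_1}\Psi_{t_1}$ are bounded endomorphisms tending to $0$ as $t\to t_1$, so the graph norms are uniformly equivalent near $t_1$. By openness of the Fredholm operators and local constancy of the index, $t\mapsto\index(\scr{D}^+_t\upharpoonright U_R,B_{<0}(A_t^+))_\lambda$ is locally constant, hence constant on $[t_0,\infty)$, and we conclude as in the first paragraph. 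A variant of this argument replaces the collar trivialization by passing to the complete manifold with cylindrical end $\wh{U}_R=U_R\cup_{N_R}\big(N_R\times[0,\infty)\big)$, on which invertibility of $(A_t)_\lambda$ makes the extended operator $T$-Fredholm with index equal to the APS index, so that the continuity machinery of Section~\ref{sec:WittenDef} (Lemma~\ref{lem:criterion} and Remark~\ref{rem:GapTopology}) applies with only cosmetic changes, the extra boundary term in Green's formula \eqref{eqn:Green} being controlled by the uniform invertibility of $(A_t^+)_\lambda$.

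The main obstacle is the simultaneous $t$-dependence of the operator \emph{and} of its domain: Lemma~\ref{lem:criterion} does not apply verbatim because the APS boundary condition moves with $t$. The genuine content is therefore (i) showing the APS projections $P_t$ move norm-continuously, which rests on the uniform spectral gap of $(A_t^+)_\lambda$ supplied by Proposition~\ref{prop:InvertBoundary}, and (ii) converting this into a norm-continuous family of Fredholm operators with a fixed domain, either by the collar-supported unitary trivialization or by the cylindrical-end reformulation above.
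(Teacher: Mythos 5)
Your reduction (use the $t$-independence of the total index from Proposition \ref{prop:CtsTFredholm2} and constancy of one summand) and your use of Proposition \ref{prop:InvertBoundary} to get a spectral gap for $(A_t)_\lambda$ follow the paper, but the step that actually handles the moving boundary condition has a genuine gap. The Kato unitary $W_t$ intertwining $P_{t_1}$ and $P_t$ is built from spectral projections of the elliptic operator $(A_t^+)_\lambda$; it is a genuinely non-local operator on $L^2(N_R,\E^+)_\lambda$ and is not induced by any bundle map. It therefore cannot be ``extended to a bundle automorphism $\Psi_t$ of $\E^+$'' supported in a collar, and the ensuing claims collapse: conjugation by a non-local $\Psi_t$ does not visibly preserve the principal symbol, the difference $\Psi_t^{-1}\scr{D}^+_t\Psi_t-\Psi_{t_1}^{-1}\scr{D}^+_{t_1}\Psi_{t_1}$ is not a bounded bundle endomorphism (it contains commutators of a first-order operator with $\Psi_t$), and the asserted uniform equivalence of graph norms is unproven. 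A second, related gap: you only establish $L^2$-norm continuity of the projections $P_t$, but the APS condition $B_{<0}(A_t^+)$ is a subspace of $H^{1/2}(N_R,\E^+)$, so what is needed is continuity of $P_{<0}(A_t)$ in the $H^{1/2}$ operator norm; this is exactly the point where the paper invokes the framework of \cite[Section 8.2]{BarBallmann} for continuous families of boundary conditions together with \cite[Theorem 3.2]{KirkKlassen}.

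For comparison, the paper sidesteps conjugation entirely by decoupling the deformation into a two-parameter family $(s,t)\mapsto(\scr{D}_s,B_{<0}(A_t))_\lambda$ with $t\ge t_\lambda$: varying $s$ with the boundary condition frozen changes the operator by the bounded endomorphism $(s-s')(f\theta-\i\,\c(v_{\sY}))$ on the compact $U_R$, so Lemma \ref{lem:criterion} and Remark \ref{rem:CommonCore} apply with a fixed domain; varying $t$ with the interior operator frozen only moves the boundary condition, and the invertibility of $(A_t)_\lambda$ plus the cited continuity results give constancy of the index. Your closing alternative---attaching a cylindrical end to $U_R$ and claiming the Section \ref{sec:WittenDef} machinery applies ``with only cosmetic changes''---is essentially what the paper carries out in Section \ref{sec:Paradan} for $U_\beta$, and there it requires real work (the Fredholm and vanishing statements around Proposition \ref{prop:DMFredholm}), so it cannot be waved through. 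To repair your argument, either quote the B{\"a}r--Ballmann/Kirk--Klassen results for the family $B_{<0}(A_t)$ as the paper does, or upgrade your projection continuity to the $H^{1/2}$ scale and apply the stability theory for elliptic boundary value problems directly, rather than attempting a bundle-level conjugation.
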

\begin{proof}
By Remark \ref{rem:SumIndependent} it is enough to prove this for $\index(\scr{D}^+_t\upharpoonright U_R,B_{<0}(A_t^+))_\lambda$.  For ease of reading, for the remainder of the proof we will omit `$\upharpoonright U_R$' from the notation.  By Proposition \ref{prop:InvertBoundary}, there is some $t_\lambda$ such that for all $t \ge t_\lambda$, the adapted boundary operator $(A_t)_\lambda$ is invertible.  Thus the Hilbert space adjoint of $(\scr{D}_t^+,B_{<0}(A_t^+))_\lambda$ is $(\scr{D}_t^-,B_{<0}(A_t^-))_\lambda$ (i.e. we may omit the $0$-eigenspace), and so the index of $(\scr{D}_t^+,B_{<0}(A_t^+))_\lambda$ is the index in the $\bZ_2$-graded sense of the odd \emph{self-adjoint} operator $(\scr{D}_t,B_{<0}(A_t))_\lambda$.  

We will prove that the index of the 2-parameter family $(s,t) \mapsto (\scr{D}_s,B_{<0}(A_t))_\lambda$, $s \ge 0$, $t\ge t_\lambda$ is independent of $(s,t)$.  First fix $t$ and consider the dependence on $s$.  By Lemma \ref{lem:criterion} and Remark \ref{rem:CommonCore}, norm-continuity of the bounded transforms follows from an estimate of the form appearing in Lemma \ref{lem:criterion}.  But in this case the operator $a=f\theta-\i \c(v_{\sY})$ is bounded on the compact space $U_R$, so the estimate holds. 

Next fix $s$ and consider the dependence on $t \ge t_\lambda$.  The idea is that by continuity of the spectrum, invertibility of $(A_t)_\lambda$ for $t \ge t_\lambda$ implies that the boundary condition $B_{<0}(A_t)_\lambda$ varies `continuously' with $t$, since no eigenvalues can cross $0$.  This in turn implies constancy of the index.  A discussion of continuous families of boundary conditions can be found, for example, in B{\"a}r and Ballmann \cite[Section 8.2]{BarBallmann}.  Let $P_{<0}(A_t)$ denote the $L^2$-orthogonal projection onto $B_{<0}(A_t)$, and let $r \ge 0$.  Since $\partial U_R$ is compact, the dense subspace $\dom(|A_t|^r) \subset L^2(\partial U_R,\E)$ does not depend on $t$ and defines the level $r$ Sobolev space $H^{(r)}=L^2_r(\partial U_R, \E)$.  It follows from the spectral theorem that $P_{<0}(A_t)$ induces a bounded linear operator $P_{<0}(A_t)^{(r)}$ in $H^{(r)}$.  According to a result in \emph{loc. cit.}, it suffices to prove that the family $t \mapsto P_{<0}(A_t)^{(r)}_\lambda$, $t \ge t_\lambda$ is norm-continuous with respect to the operator norm on $\bB(H^{(r)}_\lambda)$ for $r=1/2$.  A short, self-contained proof of this fact (for arbitrary $r$) can be found in \cite[Theorem 3.2]{KirkKlassen}.
\end{proof}

By Corollary \ref{cor:tDepend}, it makes sense to define 
\[ \index_{\tn{APS},\beta}(\scr{D},v)=\lim_{t \rightarrow \infty} \index(\scr{D}^+_t\upharpoonright U_\beta,B_{<0}(A_t^+)) \]
as well as
\begin{equation} 
\label{eqn:APSsum}
\index_{\tn{APS},R}(\scr{D},v)=\lim_{t\rightarrow \infty}\index(\scr{D}^+_t\upharpoonright U_R,B_{<0}(A_t^+))=\sum_{|\beta|<R} \index_{\tn{APS},\beta}(\scr{D},v),
\end{equation}
with the convergence being in $R^{-\infty}(T)$.  Taking the limit $t \rightarrow \infty$ of \eqref{eqn:SplitD} we find
\begin{equation}
\label{eqn:Limt}
\index(\scr{D})=\index_{\tn{APS},R}(\scr{D},v)+\lim_{t \rightarrow \infty}\index(\scr{D}^+_t\upharpoonright W_R,B_{\le 0}(-A_t^+)).
\end{equation}

\subsection{Dependence of the APS index on $R$.}
\begin{theorem}[compare \cite{MaZhangTransEll} Theorem 2.1]
\label{prop:RDepend}
The limit
\[ \lim_{R \rightarrow \infty} \lim_{t\rightarrow \infty} \index(\scr{D}^+_t\upharpoonright W_R,B_{\le 0}(-A_t^+))=0 \]
in $R^{-\infty}(T)$.
\end{theorem}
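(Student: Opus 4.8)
The plan is to fix $\lambda \in \Irr(T) \simeq \Lambda^\ast$ and show that the $\lambda$-coefficient $\index(\scr{D}^+_t\upharpoonright W_R,B_{\le 0}(-A_t^+))_\lambda$ vanishes as soon as $R$ (depending on $\lambda$) and then $t$ are large enough; by Corollary~\ref{cor:tDepend} this quantity is eventually constant in $t$, and feeding the conclusion into \eqref{eqn:Limt} and letting $R\to\infty$ proves the theorem. By Proposition~\ref{prop:InvertBoundary}, for $t\gg 0$ the adapted boundary operator $(A_t)_\lambda$ is invertible, so $B_{\le 0}(-A_t^+)=B_{<0}(-A_t^+)$ and the Atiyah--Patodi--Singer problem on $W_R$ becomes the positive part of the odd self-adjoint operator $(\scr{D}_t\upharpoonright W_R, B_{<0}(-A_t))_\lambda$, whose index is the graded dimension of its kernel. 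Thus it suffices to prove that for $R$ and $t$ large this self-adjoint operator is \emph{invertible} on the $\lambda$-isotypical subspace.

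First I would establish invertibility by a coercivity estimate, following the Bochner-formula argument in the proof of Proposition~\ref{prop:CtsTFredholm}. The Bochner formula~\eqref{eqn:Bochner0}, restricted to the $\lambda$-component and estimated exactly as in the passage from~\eqref{eqn:Bochner0} to~\eqref{eqn:VLower}, gives a quadratic-form inequality $(\scr{D}_t^2)_\lambda \ge (\st{D}^2+V_t)_\lambda$ on $\Y$, where
\[
V_t=t^2|v_{\sY}|^2+4\pi t\pair{\mu}{v}-tc_\lambda+(1+t)^2f^2\vartheta^2-(1+t)f|b_1|-(1+t)|df||b_2|,
\]
$c_\lambda=\sup_{\Y}4\pi|\pair{\lambda}{v}|$ and $\vartheta^2$ the lowest eigenvalue of $\theta^2$. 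On the manifold-with-boundary $W_R$ the integration by parts underlying this inequality produces an extra boundary integral along $N_R$; because $A_t$ is built from the canonical boundary operator~\eqref{eqn:CanonicalBoundary} and the condition $B_{<0}(-A_t)$ is the APS condition, this boundary term is controlled (and has the favourable sign; cf.\ the boundary calculations in B{\"a}r--Ballmann \cite{BarBallmann} and Ma--Zhang \cite{MaZhangTransEll}), so it does not affect the conclusion. Hence, by the argument of Proposition~\ref{prop:CompactnessBoundedBelow}, it is enough to show that $V_t\ge 1$ \emph{everywhere on $W_R$} once $R$ and $t$ are large.

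The heart of the proof is therefore the claim: there is $R_\lambda$ such that for every $R\ge R_\lambda$ there is $t_{\lambda,R}$ with $\{V_t<1\}\subset U_R$ for all $t\ge t_{\lambda,R}$. Here is how I would prove it. On $\Cyl_{\Q}$ one has $\vartheta^2=1$ and $f\ge 1$, and $\pair{\mu}{v}$ is bounded below there by Lemma~\ref{lem:MuVPairing}, so the term $(1+t)^2f^2\vartheta^2$ (together with the control of $f,|df|$ via~\eqref{eqn:fcnf}, \eqref{eqn:xlambda} as in Proposition~\ref{prop:CtsTFredholm}) forces $V_t\to+\infty$; in particular $\{V_t<1\}$ avoids $\Cyl_{\Q}$ and a neighbourhood of its boundary once $t$ is large. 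On the closed set $\Y\setminus\Cyl_{\Q}=x^{-1}([0,1])$ one has $f\equiv 1$, hence $V_t\ge t^2|v_{\sY}|^2+4\pi t\pair{\mu}{v}-tc_\lambda-(1+t)C$; since $\pair{\mu}{v}$ is bounded below, $V_t<1$ forces both $|v_{\sY}|<\epsilon_t$ with $\epsilon_t\to 0$ and $\pair{\mu}{v}\le D_\lambda$ for a constant $D_\lambda$. Now comes the input beyond Proposition~\ref{prop:CtsTFredholm}: on $Z\cap(\Y\setminus\Cyl_{\Q})$ we have $f\equiv 1$, so Lemma~\ref{lem:MuVPairing} shows the restriction of $\pair{\mu}{v}$ there is \emph{proper} and bounded below; consequently $Z_{D_\lambda}:=\{v_{\sY}=0\}\cap\{\pair{\mu}{v}\le D_\lambda\}\cap(\Y\setminus\Cyl_{\Q})$ is \emph{compact}. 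Then for $t$ large $\{V_t<1\}$ lies in any prescribed neighbourhood of $Z_{D_\lambda}$. Since $\phi(Z_{D_\lambda})$ is a compact subset of $\t$, it meets only finitely many points of the discrete set $W\cdot\B$; choosing $R_\lambda$ larger than all their norms and recalling that each $U_\beta$ is a neighbourhood of $Z_\beta\cap(\Y\setminus\Cyl_{\Q})$, we get $Z_{D_\lambda}\subset U_{R_\lambda}$, so (taking the prescribed neighbourhood small enough) $\{V_t<1\}\subset U_{R_\lambda}\subset U_R$ for all $R\ge R_\lambda$ and $t$ large. This proves the claim, hence (combining with the first two paragraphs) the theorem.

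The main obstacle is this last step: controlling the \emph{infinitely many} components $Z_\beta$ with $|\beta|\ge R$ simultaneously. The point is precisely that $\pair{\mu}{v}|_{Z_\beta}=\chi(|\beta|^2)\pair{\mu}{\beta}$ grows without bound as $|\beta|\to\infty$ — equivalently, that $\pair{\mu}{v}$ is proper on $Z\cap(\Y\setminus\Cyl_{\Q})$ — which is exactly the consequence of the commutation relation~\eqref{eqn:CommutationRelation}, i.e.\ of positive-definiteness of $\kappa$, isolated in Lemma~\ref{lem:MuVPairing} ($\mu(\lambda.y)=\mu(y)+\kappa(\lambda)$, $\phi(\lambda.y)=\phi(y)+\lambda$). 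Once this uniformity is in hand, the boundary-term bookkeeping on $W_R$ is routine, because the self-adjoint reformulation available for $t$ large reduces everything to the coercivity estimate already familiar from the boundaryless case treated in Proposition~\ref{prop:CtsTFredholm}.
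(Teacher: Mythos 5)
Your overall strategy is the paper's: fix $\lambda$, reduce to a coercivity estimate on $W_R$ for sections satisfying the APS condition, and prove it from the Bochner inequality $(\scr{D}_t^2)_\lambda\ge(\st{D}^2+V)_\lambda$ by showing the potential is large on $W_R$ once $R$ and $t$ are big. The part you call the heart of the proof is indeed exactly the paper's argument: on $W_R\setminus\Cyl_{\Q}$ one has $f\equiv1$, the function $\pair{\mu}{v}$ is proper and bounded below there by Lemma \ref{lem:MuVPairing} (this is where the commutation relation \eqref{eqn:CommutationRelation}, i.e.\ positivity of $\kappa$, enters), the relevant sublevel set is compact, and enlarging $R$ excises the finitely many components $Z_\beta$ meeting it, so that $|v_{\sY}|$ is bounded below on what remains and the $t^2|v_{\sY}|^2$ term wins; on $\Cyl_{\Q}$ the term $(1+t)^2f^2\vartheta^2$ with $\vartheta\equiv1$ does the job. (Your detour through Proposition \ref{prop:InvertBoundary} and invertibility of the self-adjoint extension is harmless but unnecessary: the single estimate for sections with $(s,A_ts)_{N_R}\ge0$ kills kernel and cokernel simultaneously.)

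The genuine gap is the boundary step. It is not true that the boundary term arising from Green's formula on $W_R$ ``has the favourable sign'' under the APS condition: along $N_R$ one has $-\c(\nu)\scr{D}_t s=A_t s+\nabla^{\E}_\nu s-\tfrac{\dim(N_R)}{2}hs$, and only the $A_t$-part is signed by the hypothesis $(s,A_ts)_{N_R}\ge 0$; the normal-derivative and mean-curvature terms survive. The paper disposes of them by a second application of Green's formula to $\nabla^{\E}$, the Lichnerowicz--Weitzenbock formula $\st{D}^2=\nabla^\ast\nabla+\R$ with $\R$ bounded by a constant $r$, and the trace inequality $\|s\|^2_{N_R}\le(1+\delta^{-1})\|s\|^2_{W_R}+\delta\|\nabla s\|^2_{W_R}$; this produces $t$-independent (but $R$-dependent) constants on the wrong side of the inequality. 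Consequently your reduction ``it is enough to show $V_t\ge 1$ on $W_R$'' is too weak: one needs $V_t\ge t$ on $W_R$, so that the final estimate takes the form $\|\scr{D}_ts\|^2\ge\tfrac12\|\nabla s\|^2+(t-t_{\lambda,R})\|s\|^2$ and the boundary constants are beaten for $t>t_{\lambda,R}$. This is repairable from within your own argument, since your displayed bounds actually give $t^{-1}V_t\ge1$, i.e.\ $V_t\ge t$, on $W_R$; but as written, the favourable-sign claim and the threshold $V_t\ge1$ do not justify discarding the boundary contribution, and that contribution is where a substantial part of the paper's proof lives.
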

In part of the proof we will use a method we learned from \cite[pp. 27--29]{MaZhangTransEll}.
\begin{proof}
Fix $\lambda \in \Lambda^\ast \simeq \Irr(T)$.\ignore{We must show that there exists a constant $R_\lambda$ such that for each $R>R_\lambda$, 
\[ \lim_{t \rightarrow \infty} \index(\scr{D}^+_t\upharpoonright W_R,B_{\le 0}(-A_t^+))_\lambda=0.\]}\ignore{
the following holds: there exists a constant $t_{\lambda,R}$ such that for all $t>t_{\lambda,R}$ the index $\index(\scr{D}^+_t\upharpoonright W_R,B_{\le 0}(-A_t^+))_\lambda=0$.
}
We will prove that when $R$ is sufficiently large there is a constant $t_{\lambda,R}$ such that for $t>t_{\lambda,R}$ the inequality
\[ \|\scr{D}_t s\|^2 \ge \tfrac{1}{2}\|\nabla^{\E} s\|^2+(t-t_{\lambda,R})\|s\|^2 \]
holds for all $s \in C^\infty_c(W_R,\E)_\lambda$ satisfying $(s,A_ts)_{N_R}\ge 0$.  Hence when $t>t_{\lambda,R}$ both the kernel and cokernel of $(\scr{D}_t^+\upharpoonright W_R,B_{\le 0}(-A_t^+))_\lambda$ vanish separately (recall that the adjoint operator is $(\scr{D}^-_t\upharpoonright W_R,B_{<0}(-A_t^-))$), which implies the result.

By Green's formula,
\begin{equation} 
\label{eqn:GreenV}
\|\scr{D}_ts\|^2=(s,\scr{D}_t^2s)_{W_R}-(s,\c(\nu)\scr{D}_ts)_{N_R} 
\end{equation}
where we have used that $-\nu$ is the inward unit normal vector for $W_R$, and the skew-adjointness of $\c(\nu)$.\ignore{It is necessary to use Green's formula here, even if $\scr{D}_t$ with boundary condition is self-adjoint, because the other argument $\scr{D}_t s$ of the inner product might not satisfy the boundary condition ($s$ on its own does, but $\scr{D}_t$ need not preserve this fact), i.e. might not be in the domain of the Hilbert space operator.  The usual relation for symmetric/self-adjoint operators is only valid when the vectors are in the domain of the operator in question.}

For the first term in \eqref{eqn:GreenV} we use the lower bound $(\scr{D}_t^2)_\lambda \ge (\st{D}^2+V)_\lambda$ proved in Proposition \ref{prop:CtsTFredholm}, and the lower bound for the potential $V$ in equation \eqref{eqn:VLower}:
\[V \ge t^2|v_{\sY}|^2+4\pi t\pair{\mu}{v}-tc_\lambda+(1+t)f^2\big((1+t)\vartheta-c f^{-2}(f+|f^\prime|)\big). \] 
By \eqref{eqn:fcnf}, $cf^{-2}(f+|f^\prime|)$ is bounded globally on $\Y$ by some constant $c^\prime$. 
\ignore{ functions $f^{-1}$, $|f^\prime|f^{-2}$ are bounded globally on $\Y$, hence there exists a constant $c_4(\lambda)$ such that
\[ c_4(\lambda)\ge c_1(\lambda)f^{-2}+c_2f^{-1}+c_3|f^\prime|f^{-2}. \]}Assume $t>1$ so $2t>t+1$, thus
\begin{equation} 
\label{eqn:BoundtV}
t^{-1}V \ge t|v_{\sY}|^2+4\pi\pair{\mu}{v}-c_\lambda+f^2(t\vartheta^2-2c^\prime).
\end{equation}
We claim that for $R$ and $t$ sufficiently large we have $V|_{W_R} \ge t$; indeed, we may verify this separately on $\Cyl_{\Q}$, $W_R \setminus \Cyl_{\Q}$:
\begin{enumerate}[leftmargin=*]
\item On $\Cyl_{\Q}$, $\vartheta \equiv 1$ and $f \ge 1$, hence  
\[ t^{-1}V\ge 4\pi \pair{\mu}{v}-c_\lambda+f^2(t-2c^\prime).\]
Since $\pair{\mu}{v}$ is bounded below, for $t \gg 0$ we will have $t^{-1}V \ge 1$.
\item On $W_R \setminus \Cyl_{\Q}$, $f=1$.  Dropping the non-negative term $\vartheta^2$, we have
\[ t^{-1}V \ge t|v_{\sY}|^2+4\pi\pair{\mu}{v}-c_\lambda-2c^\prime. \]
On $\Y \setminus \Cyl_{\Q}$ the function $\pair{\mu}{v}$ is proper and bounded below.  Hence the subset $\calK_\lambda \subset \Y \setminus \Cyl_{\Q}$ where $4\pi \pair{\mu}{v}\le c_\lambda+2c^\prime+1$ is compact.  By taking $R$ sufficiently large---\emph{thus excising sufficiently many components of the vanishing locus $Z \cap (\Y \setminus \Cyl_{\Q})$ from $\Y \setminus \Cyl_{\Q}$}---say $R>R_\lambda$, we can arrange that $v_{\sY}$ does not vanish on $\calK_\lambda \cap W_R$.  By compactness $|v_{\sY}|$ is bounded below by some positive constant on $\calK_\lambda \cap W_R$, hence taking $t \gg 0$ (depending on $R$) we can ensure
\[ t|v_{\sY}|^2+4\pi \pair{\mu}{v}-c_\lambda-2c^\prime \ge 1. \]
\end{enumerate}
\ignore{In either case $t^{-1}V \ge 1$, hence by \eqref{eqn:BoundtV}:
\[ t^{-1}V|_{W_R} \ge 1 \qquad \Rightarrow \qquad V|_{W_R} \ge t.\]}
We have thus shown that for $R>R_\lambda$ and $t$ sufficiently large (depending on $R$)
\begin{equation}
\label{eqn:BoundTerm1}
(s,\scr{D}_t^2 s)_{W_R} \ge (s,\st{D}^2s)_{W_R}+(s,Vs)_{W_R} \ge (s,\st{D}^2s)_{W_R}+t\|s\|^2.
\end{equation}

Now consider the second term in \eqref{eqn:GreenV}.  For the remainder of the proof we write $\nabla$ in place of $\nabla^{\E}$ to make expressions a little cleaner.  Along $N_R$ we have
\[ -\c(\nu)\scr{D}_t s=A_ts+\nabla_\nu s-\tfrac{\dim(N_R)}{2}hs,\]
where $h$ is the mean curvature of $N_R$.  By assumption $(s,A_ts)_{N_R} \ge 0$ hence, dropping this term,
\begin{equation} 
\label{eqn:SecondTerm}
-\big(s,\c(\nu)\scr{D}_ts\big)_{N_R} \ge \big(s,\nabla_\nu s\big)_{N_R}-\tfrac{\dim(N_R)}{2}(s,hs)_{N_R}.
\end{equation}
To obtain an expression for $(s,\nabla_\nu s)$, apply Green's formula to the operator $\nabla=\nabla^{\E}$ on $W_R$:
\begin{align*}
\|\nabla s\|^2&=(s,\nabla^\ast \nabla s)_{W_R}-(\sigma_\nabla(-\nu)s,\nabla s)_{N_R}\\
&=(s,\nabla^\ast \nabla s)_{W_R}+(\nu \otimes s,\nabla s)_{N_R}\\
&=(s,\nabla^\ast \nabla s)_{W_R}+(s,\iota(\nu)\nabla s)_{N_R}
\end{align*} 
where in the last line $\iota(\nu)$ denotes the contraction operator defined using the metric.  Thus
\[ (s,\nabla_\nu s)_{N_R}=\|\nabla s\|^2-(s,\nabla^\ast \nabla s)_{W_R}.\]
By the Lichnerowicz-Weitzenbock formula,
\[ \st{D}^2=\nabla^\ast \nabla+\R \]
where $\R$ is a bundle endomorphism of $\E$, depending on the metrics of $E$, $S$, $T\Y$ and the choices of connections.  Because these structures take a product form on $\Cyl_{\Q}$ and are $\hLambda$-invariant, $\R$ is bounded globally on $\Y$ by some constant $r$.  Substituting these expressions in \eqref{eqn:SecondTerm} we have
\begin{equation}
\label{eqn:BoundTerm2}
-\big(s,\c(\nu)\scr{D}_ts\big)_{N_R} \ge \|\nabla s\|^2-(s,\st{D}^2 s)_{W_R}-r\|s\|^2-\tfrac{\dim(N_R)}{2}(s,hs)_{N_R}.
\end{equation} 

Substituting \eqref{eqn:BoundTerm1}, \eqref{eqn:BoundTerm2} into \eqref{eqn:GreenV} yields, for $R>R_\lambda$ and all $t$ sufficiently large (depending on $R$):
\[ \|\scr{D}_ts\|^2 \ge \|\nabla s\|^2-\tfrac{\dim(N_R)}{2}(s,hs)_{N_R}+(t-r)\|s\|^2.\]
As $N_R$ is compact, $h$ is bounded above by a constant depending only on $R$.  For any $\delta \in (0,1)$ there is an estimate (cf. \cite[Theorem 1.5.1.10]{Grisvard})
\[ \|s\|^2_{N_R} \le (1+\delta^{-1})\|s\|_{W_R}^2+\delta \|\nabla s\|_{W_R}^2 \]
Choosing $\delta$ to be sufficiently small we obtain an estimate of the form
\[ \|\scr{D}_ts\|^2 \ge \tfrac{1}{2}\|\nabla s\|^2+(t-t_{\lambda,R})\|s\|^2\]
for some constant $t_{\lambda,R}$.
\end{proof}

\subsection{The `limit of APS index' formula.}
By Theorem \ref{prop:RDepend} we can take the limit as $R \rightarrow \infty$ of \eqref{eqn:Limt}:
\begin{equation} 
\label{eqn:LimitOfAPS}
\index(\scr{D})=\lim_{R \rightarrow \infty}\index_{\tn{APS},R}(\scr{D},v)=\sum_{\beta}\index_{\tn{APS},\beta}(\scr{D},v).
\end{equation}
For the second equality we have used \eqref{eqn:APSsum}.  This is the Ma-Zhang-type `limit of APS index' formula for $\index(\scr{D})$; it expresses the index as a sum of contributions $\index_{\tn{APS},\beta}(\scr{D},v)$ labelled by the components of the vanishing locus $Z$.  $Z$ has infinitely many components, but as a consequence of Theorem \ref{prop:RDepend}, the sum is locally finite, in the sense that for any fixed $\lambda \in \Irr(T)$, $\index_{\tn{APS},\beta}(\scr{D},v)_\lambda=0$ for all but finitely many $\lambda$.

\begin{remark}
For a non-compact prequantized Hamiltonian $G$-space with proper moment map, the analogue of $\index(\scr{D})$ is not defined in general.  In their proof of the Vergne conjecture, Ma-Zhang \cite{MaZhangVergneConj,MaZhangTransEll} showed that $\index_{\tn{APS}}(\scr{D},v)$, defined using a deformation and limits $R,t\rightarrow \infty$ similar to above, is well-defined.  The resulting `quantization' of $M$ satisfies the $[Q,R]=0$ Theorem and behaves functorially under restriction to subgroups.
\end{remark}

\ignore{
By Theorem \ref{prop:RDepend} and Corollary \ref{cor:tDepend} we can choose $R_\lambda$ such that for $R>R_\lambda$
\[ \index(\scr{D}^+_t\upharpoonright W_R,B_{\le 0}(-A_t^+))_\lambda=0 \]
for all $t$ larger than some constant $t_{\lambda,R}$ depending on $R$.  Thus by \eqref{eqn:SplitD} and Proposition \ref{prop:CtsTFredholm},
\[ \index(\scr{D})_\lambda=\index_{\tn{APS},R}(\scr{D},v)_\lambda \]
for all $R>R_\lambda$.  In particular the right hand side stabilizes as $R \rightarrow \infty$ and we can define $\index_{\tn{APS}}(\scr{D},v) \in R^{-\infty}(T)$ with multiplicities
\[ \index_{\tn{APS}}(\scr{D},v)_\lambda=\lim_{R\rightarrow \infty} \index_{\tn{APS},R}(\scr{D},v)_\lambda, \]
satisfying $\index(\scr{D})=\index_{\tn{APS}}(\scr{D},v)$.  As a consequence of equation \eqref{eqn:APSsum}:
\begin{equation} 
\label{eqn:LimitOfAPS}
\index(\scr{D})=\index_{\tn{APS}}(\scr{D},v)=\sum_{\beta}\index_{\tn{APS},\beta}(\scr{D},v).
\end{equation}
}

\section{The Paradan-type index formula}\label{sec:Paradan}
In this section we explain how to express the `limit of APS index' contributions $\index_{\tn{APS},\beta}(\scr{D},v)$ as indices of transversally elliptic symbols, resulting in a formula similar to that of Paradan \cite{ParadanRiemannRoch} (see also \cite{WittenNonAbelian}) in the compact case.  We will be somewhat brief as our strategy follows along similar lines to Ma-Zhang \cite[Section 1.4]{MaZhangTransEll}, \cite{MaZhangBravermanIndex}.  Although the situation is similar to \cite{MaZhangTransEll}, we cannot quite apply their results immediately: for example, the operator we consider has an additional zeroth order term (containing $\theta$), requiring small modifications.  Throughout this section $\beta$ will be fixed, and we set $U=U_\beta$, $N=N_\beta$ to simplify the notation.

\subsection{A Braverman-type operator.}
Following the strategy of Ma-Zhang, the first step is to study a family of operators $\scr{D}^{M}_t$, $t>0$ on an open manifold $M \supset U$ of Braverman-type (cf. \cite{Braverman2002}), which extend $\scr{D}_t$ on $U$ and such that $\index_{\tn{APS},\beta}(\scr{D},v)=\index(\scr{D}^M_t)$ for all $t>0$.  The point is that we end up with the ordinary index (rather than a limit of indices) of a Dirac-type operator on $M$, and this is a little closer to the usual setting for transversally elliptic symbols.

Recall that $U=U_\beta \subset \Y$ is a compact manifold with boundary $\partial U=N$.  Let $M$ be a relatively compact, collaring open neighborhood of $U$ in $\Y$, such that $\ol{M} \cap Z_\gamma=\emptyset$ for $\gamma \ne \beta$.  There is a $T$-equivariant diffeomorphism
\[ M \simeq U \bigcup_N N \times [0,\infty)\]
such that the outward normal vector $-\nu=\partial_r$, $r \in [0,\infty)$.  By pullback from $\Y$, we may consider $E$ and $S$ as $T$-equivariant vector bundles over $M$, and $v$, $f$, $\theta$ as smooth sections of the appropriate bundles over $M$.  

Define a new metric $g_M$ on $M$ by patching together the given metric on $U \cup_N N \times [0,1)$ (viewed as an open subset of $\Y$) with a cylinder metric of the form $dr^2+g_N$ on $N \times (0,\infty)$, using a partition of unity.  Thus $M$ becomes a complete manifold with cylindrical end $N \times (1,\infty)$, where the metric takes the form
\[ g_M|_{N\times (1,\infty)}=dr^2+g_N.\]  
Similarly we define Hermitian metrics and compatible connections on $E$, $S$ by patching together the given metrics and connections on $U \cup_N N \times [0,1)$ with metrics and connections on $N \times (0,\infty)$ that are independent of $r$.  In this way we obtain a new (essentially self-adjoint) Dirac operator $\st{D}^M$ acting on sections of $\E$ over $M$, which extends the original $\st{D}$ on $U$; note that this operator differs from the Dirac operator on $\Y$, because we have modified the metric and connections on the collar neighborhood $N \times [0,\infty)$.

Choose a smooth monotone function $h \colon [0,\infty) \rightarrow [1,\infty)$ such that $h(r)=e^r$ for $r\ge 1$ and $h(r)=1$ for $r \in [0,1/2)$.  View $h$ as a function on $N \times [0,\infty)$ and extend it identically by $1$ to $M$.  Define
\[ \scr{D}_t^M=\st{D}^M_{\theta,t}-\i th\c(v_M), \qquad \st{D}^M_{\theta,t}=\st{D}^M+(1+t)hf\theta.\]
On $U$ this agrees with the operator $\scr{D}_t$.  
\begin{remark}
Essentially we have attached a cylindrical end to $U$, and multiplied the zeroth order terms in $\scr{D}_t$ by the function $h$ which blows up at infinity in $M$.  It is not particularly important at this stage that $h$ blows up exponentially, but this will be convenient in the next subsection.
\end{remark}
\begin{proposition}
\label{prop:DMFredholm}
For $t \ge 1$, the operator $\scr{D}_t^M$ is $T$-Fredholm.  For each $\lambda \in \Irr(T)$, the family of bounded transforms $t \mapsto b(\scr{D}_t^M)_\lambda$ is norm continuous. 
\end{proposition}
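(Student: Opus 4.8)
The plan is to follow the proofs of Propositions~\ref{prop:CtsTFredholm} and~\ref{prop:CtsTFredholm2} almost verbatim; the situation here is in fact \emph{simpler}, because $M$ is relatively compact away from its cylindrical end $N\times(1,\infty)$, and on that end all of the relevant geometry and all of the coefficient sections ($v$, $f$, $\theta$, the moment maps $\mu$, $\mu_E$, the curvature tensors, \dots) have been arranged to be $r$-independent, the only $r$-dependent ingredient being the blow-up function $h$. The first step is a Bochner formula for $(\scr{D}^M_t)^2$ exactly as in Proposition~\ref{prop:Bochner}, but with $h$ inserted in front of each zeroth-order term. Since $h$ is a $T$-invariant function it commutes with everything and, in particular, does not spoil the vanishing of the cross term between $(1+t)hf\theta$ and $-\i th\c(v_M)$; one obtains
\[
(\scr{D}^M_t)^2=(\st{D}^M)^2+t^2h^2|v_M|^2+(1+t)^2h^2f^2\theta^2+4\pi th\pair{\mu+\mu_E}{v}+2\i t h\, v_j\L_{\xi^j}^{\E}+(\textrm{error}),
\]
where the error collects $\i th\,b$ (with $b$ the bounded endomorphism of Proposition~\ref{prop:Bochner}) together with the Clifford terms $(1+t)\c(d(hf))\theta$ and $-\i t\c(dh)\c(v_M)$ coming from $dh$ and $d(hf)$.

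On the cylindrical end $dh=h\,dr$ and, since $f$ is $r$-independent there, $d(hf)=hf\,dr$, so (using $t\ge 1$) the error is bounded in norm by $C\,th$; combined with the boundedness on $M$ of $v$, $f$, $\theta$, $\mu$, $\mu_E$ and the geometric tensors, and with the fact that on the $\lambda$-isotypical component $\L_{\xi^j}^{\E}$ acts as the scalar $2\pi\i\pair{\lambda}{\xi^j}$, this yields an operator inequality
\[
(\scr{D}^M_t)^2_\lambda\ \ge\ \big((\st{D}^M)^2+V\big)_\lambda,\qquad V:=t^2h^2|v_M|^2+(1+t)^2h^2f^2\vartheta^2-C_\lambda(th+1),
\]
with $\vartheta^2$ the continuous $T$-invariant smallest-eigenvalue function of $\theta^2$. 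Next one checks that $V$ is proper and bounded below on $M$: on the compact piece $\ol U$ this is automatic, while on the cylindrical end $U=U_\beta$ was chosen so that at each point of $N$ either $|v_M|\ne 0$ or $\theta^2=1$, whence $|v_M|^2+\vartheta^2\ge c>0$ along $N$ by compactness, and then (using $f\ge1$ and $(1+t)^2\ge t^2$) one gets $V\ge t^2c\,h^2-C_\lambda(th+1)\to+\infty$ as $r\to\infty$. The operator $\scr{D}^M_t$ is essentially self-adjoint --- it has the same bounded-propagation-speed symbol as $\st{D}^M$, the unbounded zeroth-order perturbation being harmless for this, cf.\ Theorem~\ref{thm:FredholmI} and \cite[Appendix~B]{LSQuantLG} --- so Proposition~\ref{prop:CompactnessBoundedBelow}, applied with $\st{D}=\st{D}^M$, potential $V$ and $T=(\scr{D}^M_t)^2+1$, shows $\big((\scr{D}^M_t)^2+1\big)^{-1}_\lambda$ is compact. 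Hence $(\scr{D}^M_t)_\lambda$ has discrete spectrum, in particular is Fredholm, and as $\lambda$ was arbitrary $\scr{D}^M_t$ is $T$-Fredholm.

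For the continuity statement I would apply Lemma~\ref{lem:criterion} and Remark~\ref{rem:CommonCore} to the family $a_t=(\scr{D}^M_t)_\lambda$ of odd self-adjoint operators, which have the common core $C^\infty_c(M,\E)_\lambda$, with $a=(hf\theta-\i h\c(v_M))_\lambda$. Because $\theta$ and $\c(v_M)$ graded-commute, $a^2=h^2\big(f^2\theta^2+|v_M|^2\big)$, which is $\le C_1h^2$ globally on $M$ since $f$, $\theta$, $v_M$ are bounded. On the other hand, wherever $h$ exceeds a ($t$- and $\lambda$-dependent) threshold one has $(\scr{D}^M_t)^2_\lambda\ge V\ge\tfrac12 t^2c\,h^2$, while $a^2$ is bounded where $h$ is bounded; combining these gives the estimate $a^2\le C(a_t^2+C')$ of condition~(c) of Lemma~\ref{lem:criterion}. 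Condition~(b) then follows from Remark~\ref{rem:CommonCore}, and condition~(a) holds since each $a_t$ is Fredholm. Therefore $t\mapsto b(\scr{D}^M_t)_\lambda$ is norm-continuous.

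The step needing the most care is the bookkeeping in the Bochner formula: one must verify that the differential of $h$ enters only at order $h$, not $h^2$, so that the deformation terms $t^2h^2|v_M|^2+(1+t)^2h^2f^2\theta^2$ --- which, by the defining property of $U_\beta$ along $N$, are bounded below by $c\,h^2$ on the cylindrical end --- still dominate at infinity. Conceptually, however, everything is easier than in Section~\ref{sec:WittenDef}: since $M$ has finite ``width'', there is no $\Lambda$-translation phenomenon and hence no analogue of Lemma~\ref{lem:MuVPairing}; the moment maps $\mu$ and $\mu_E$ are simply bounded on $M$, and it is $h$ alone that makes the potential proper.
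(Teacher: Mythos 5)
Your proposal follows the paper's own proof essentially step for step: a Bochner formula in which the first-order error terms enter at order $th$ (not $th^2$), an estimate $(\scr{D}^M_t)^2_\lambda\ge\big((\st{D}^M)^2+V\big)_\lambda$ with a potential $V$ that is proper and bounded below because $|v_M|^2+f^2\vartheta^2$ is bounded below by a positive constant on the cylindrical end while $h\to\infty$ there, Proposition \ref{prop:CompactnessBoundedBelow} to get the $T$-Fredholm property, and the bound $a^2\le Ch^2\le C'\big(V+C''\big)$ fed into Lemma \ref{lem:criterion} and Remark \ref{rem:CommonCore} for norm-continuity of the bounded transforms; this is exactly the paper's route. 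The one point to correct is precisely the spot you yourself flag as delicate: in the construction of $\scr{D}^M_t$ only the metric $g_M$ and the connections on $E$ and $S$ are made $r$-independent on $N\times(1,\infty)$; the sections $v$, $f$, $\theta$ (and hence $\mu$, $\mu_E$) are pulled back from $\Y$ and are \emph{not} $r$-independent on the end, so neither the boundedness of the derivative terms (the analogue of the endomorphism $b$, in particular $dv_j$ measured in $g_M$) nor the propagation of the lower bound $|v_M|^2+f^2\vartheta^2\ge\epsilon$ from $N$ to the whole end can be justified by invoking $r$-independence. The paper instead uses that $N\times[0,\infty)$ corresponds to a relatively compact subset of $\Y$ on which $|v_M|^2+f^2\vartheta^2>0$ (because $\ol{M}\cap Z_\gamma=\emptyset$ for $\gamma\ne\beta$, $Z_\beta\cap(\Y\setminus\Cyl_{\Q})$ lies in the interior of $U$, and $\vartheta\equiv1$ on $\Cyl_{\Q}$), that $f$ and $df$ are bounded on $M$, that $\xi^j_M$ is tangent to $N$ and independent of $r$, and that $\partial_r$ has $g_{\Y}$-length tending to $0$ at $\partial\ol{M}$, which forces the $\partial_r$-component of $dv_j$ to tend to $0$; with these substitutions in place of your $r$-independence claim, your argument is the paper's.
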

\begin{remark}
The result holds for any $t>0$, although we will not need this.
\end{remark}
\begin{proof}
We go through the argument somewhat rapidly, as it is similar to (but easier than) the proofs of Propositions \ref{prop:CtsTFredholm}, \ref{prop:CtsTFredholm2}.  Using a Bochner formula for $(\scr{D}_t^M)^2$ one finds
\ignore{The Bochner formula for $\scr{D}_t^M$ reads
\begin{equation} 
\label{eqn:BochnerM1}
(\scr{D}_t^M)^2=(\st{D}^M_{\theta,t})^2+t^2h^2|v_M|^2+4\pi th\pair{\mu+\mu_E}{v}+2\i t hv_j \L^{\E}_{\xi^j}+\i thb-\i t\c(dh)\c(v_M) 
\end{equation}
where
\[(\st{D}^M_{\theta,t})^2=(\st{D}^M)^2+(1+t)^2h^2f^2\theta^2+(1+t)h\c(df)\theta+(1+t)hf\c(\nabla^{\End(E)}\theta)+(1+t)\c(dh)f\theta.\]
Thus we see that \eqref{eqn:BochnerM1} takes the form
}
\begin{equation}
\label{eqn:BoundedM1}
(\scr{D}_t^M)^2=(\st{D}^M)^2+t^2h^2|v_M|^2+(1+t)^2h^2f^2\theta^2+(1+t)hb_1+(1+t)\c(dh)b_2+2\i t hv_j \L^{\E}_{\xi^j}
\end{equation}
where $b_1$, $b_2$ are bundle endomorphisms.  One verifies that $b_1$, $b_2$ are bounded (uniformly in $t$) using a combination of the facts that: (1) both $f$, $df$ are bounded on $M$, (2) on the cylindrical end $\xi^j_M$ is tangent to $N$ and is independent of $r$, (3) with respect to the metric $g_{\Y}$ on $\Y$, the length of the vector $\partial_r$ goes to zero at $\partial \ol{M}$; consequently in an orthonormal frame (for $M$) adapted to the cylindrical end, the $\partial_r$-component $\partial_r v_j$ of the gradient of $v_j$ goes to zero at $\partial \ol{M}$.  

On the $\lambda$-isotypical component the Lie derivative $2v^j\L^{\E}_{\xi^j}$ is bounded by a constant $c_\lambda$.  Also $\theta^2 \ge \vartheta^2$, where $\vartheta^2(m)$ is the smallest eigenvalue of $\theta^2(m)$.  Define
\[ V=t^2h^2|v_M|^2+(1+t)^2h^2f^2\vartheta^2-tc_\lambda h-(1+t)h|b_1|-(1+t)|dh|\cdot |b_2|,\]
so $(\scr{D}_t^M)^2_\lambda \ge (\st{D}^2+V)_\lambda$.  A small rearrangement (using also $t \ge 1$) shows
\begin{equation}
\label{eqn:VBoundM1}
V \ge th^2\big(t|v_M|^2+tf^2\vartheta^2-c^\prime_\lambda h^{-2}(h+|h^\prime|)\big)
\end{equation}
for some constant $c^\prime_\lambda$.  

By assumption $|v_M|^2+f^2\vartheta^2>0$ on $N \times [0,\infty) \subset M$, and since the latter corresponds to a relatively compact subset of $\Y$, $|v_M|^2+f^2\vartheta^2$ is bounded below by some constant $\epsilon>0$ on $N \times [0,\infty)$.  There is an $s_0>0$ such that $s>s_0$ implies $e^{-2s}(2e^s)<\tfrac{\epsilon}{2}(c^\prime_\lambda)^{-1}$.  Let $\calK=U\cup_N N \times [0,s_0]$, a compact subset of $M$.  Then $V|_{\calK}$ is proper and bounded below.  On $M \setminus \calK=N \times (s_0,\infty)$, we have $h=e^r$ where $r \in (s_0,\infty)$, and we obtain the simpler bound
\begin{equation}
\label{eqn:VBoundM2}
V|_{M \setminus \calK} \ge \epsilon t(t-\tfrac{1}{2}) e^{2r}.
\end{equation}
Since we assumed $t \ge 1$, this shows $V$ is also proper and bounded below on the closure of $M \setminus \calK$ in $M$, and completes the proof that $\scr{D}_t^M$ is $T$-Fredholm.

For the norm-continuity, as in Proposition \ref{prop:CtsTFredholm2} it suffices to prove an inequality of the form
\[ (h^2|v_M|^2+h^2f^2\theta^2)_\lambda \le C((\scr{D}_t^M)^2+C^\prime)_\lambda\]
for some $C,C^\prime>0$ (which may depend on $t$).  The function $|v_M|^2+f^2|\theta|^2$ is bounded on $M$ by some constant $c$.  Thus it suffices to find $C,C^\prime$ such that
\begin{equation} 
\label{eqn:ReqdBoundM}
c h^2 \le C(V+C^\prime).
\end{equation}
This is easy to ensure on the compact set $\calK$ where $h$ is bounded by taking $C,C^\prime \gg 0$.  On the other hand on $M \setminus \calK$, $h=e^r$ and we may use \eqref{eqn:VBoundM2}, thus \eqref{eqn:ReqdBoundM} is implied by
\[ c e^{2r} \le C\big(\epsilon t(t-\tfrac{1}{2})e^{2r}+C^\prime \big)\]
which holds when $C^\prime \ge 0$ and $C>2c \epsilon^{-1}$.
\end{proof}

Applying the splitting theorem to the partition $M=U \cup_N \big(N\times [0,\infty)\big)$ and using that the restriction of $\scr{D}^M_t$ to $U$ agrees with $\scr{D}_t$, we have
\[ \index(\scr{D}_t^M)=\index(\scr{D}^+_t\upharpoonright U,B_{<0}(A_t^+))+\index(\scr{D}_t^{M,+}\upharpoonright N\times [0,\infty),B_{\le 0}(-A_t^+)).\]
Proposition \ref{prop:InvertBoundary} showed that for each $\lambda \in \Irr(T)$, $(A_t)_\lambda$ is invertible for $t \gg 0$.  Thus taking a limit as $t \rightarrow \infty$, and using the fact that the left hand side is independent of $t>0$, we obtain
\[ \index(\scr{D}_t^M)=\index_{\tn{APS},\beta}(\scr{D},v)+\lim_{t\rightarrow \infty}\index(\scr{D}_t^{M,+}\upharpoonright N\times [0,\infty),B_{\le 0}(-A_t^+)).\]

\begin{proposition} $\lim_{t\rightarrow \infty}\index(\scr{D}_t^{M,+}\upharpoonright N\times [0,\infty),B_{\le 0}(-A_t^+))=0$.
\end{proposition}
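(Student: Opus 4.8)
The plan is to repeat, in a simpler setting, the argument of Theorem~\ref{prop:RDepend}. Fix $\lambda\in\Irr(T)\simeq\Lambda^\ast$. By Proposition~\ref{prop:InvertBoundary} (whose proof uses only that $N$ is compact and that at each point of $N$ either $v_M\ne 0$ or $\theta^2=1$), $(A_t)_\lambda$ is invertible for $t\gg 0$, so for such $t$ the Hilbert space adjoint of $(\scr{D}_t^{M,+}\upharpoonright N\times[0,\infty),B_{\le 0}(-A_t^+))_\lambda$ is $(\scr{D}_t^{M,-}\upharpoonright N\times[0,\infty),B_{<0}(-A_t^-))_\lambda$, and it suffices to show that for $t$ sufficiently large (depending on $\lambda$) both its kernel and cokernel vanish. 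As in Theorem~\ref{prop:RDepend}, this follows from an a priori estimate
\[ \|\scr{D}_t^M s\|^2\ \ge\ \tfrac12\|\nabla^{\E}s\|^2+(t-t_\lambda)\|s\|^2 \]
for all $s\in C^\infty_c(N\times[0,\infty),\E)_\lambda$ with $(\scr{R}_N s,A_t\scr{R}_N s)_N\ge 0$, together with the analogue for $\scr{D}_t^{M,-}$ obtained by interchanging $\E^+$ and $\E^-$ (everything in sight being symmetric under this interchange). Such an estimate forces $s=0$ once $t>t_\lambda$; a kernel or cokernel element of the APS realization decays rapidly on the cylindrical end, since there the zeroth-order part of $\scr{D}_t^M$ has pointwise operator norm at least a constant times $te^{r}$ (using $|v_M|^2+f^2\theta^2\ge\epsilon$ there), so multiplying such an element by a cutoff equal to $1$ near the compact boundary $N$ and supported in $N\times[0,k]$ produces compactly supported sections satisfying the boundary condition, to which the estimate applies, and letting $k\to\infty$ transports the conclusion to the domain.

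The one ingredient beyond Theorem~\ref{prop:RDepend} is a lower bound on the potential, and this is already contained in the proof of Proposition~\ref{prop:DMFredholm}: there one has $(\scr{D}_t^M)^2_\lambda\ge((\st{D}^M)^2+V)_\lambda$ with
\[ V\ \ge\ th^2\big(t(|v_M|^2+f^2\vartheta^2)-c'_\lambda h^{-2}(h+|h'|)\big), \]
and one knows $|v_M|^2+f^2\vartheta^2\ge\epsilon>0$ on all of $N\times[0,\infty)$ (a relatively compact subset of $\Y$ on which, at every point, either $v_M\ne 0$ or $\theta^2=1$). Since $h\ge 1$ and $h^{-2}(h+|h'|)$ is bounded on $[0,\infty)$, it follows that $V|_{N\times[0,\infty)}\ge t$ for all $t$ large. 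In contrast to Theorem~\ref{prop:RDepend} there is no ``excision'' step ($R\to\infty$): by construction $Z_\gamma\cap\ol{M}=\emptyset$ for $\gamma\ne\beta$ and $Z_\beta\cap(\Y\setminus\Cyl_{\Q})\subset U$, so the collar $N\times[0,\infty)$ meets no component of the vanishing locus and the potential is positive throughout it.

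Granting this, the estimate is derived exactly as in Theorem~\ref{prop:RDepend}. One applies Green's formula on $N\times[0,\infty)$, whose inward unit normal along $N$ is $-\nu$:
\[ \|\scr{D}_t^M s\|^2=(s,(\scr{D}_t^M)^2 s)_{N\times[0,\infty)}-(s,\c(\nu)\scr{D}_t^M s)_N. \]
Near $N$ the metric, connections and the function $h$ agree with those inherited from $\Y$ (where $h\equiv 1$), so $\scr{D}_t^M$ restricts to $\scr{D}_t$ there, $-A_t$ is its adapted boundary operator along $N$, and $-\c(\nu)\scr{D}_t^M s=A_t s+\nabla^{\E}_\nu s-\tfrac{\dim(N)}{2}\mathfrak{h}\,s$ along $N$, where $\mathfrak{h}$ denotes the mean curvature of $N$ (named so as not to clash with $h=e^r$). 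For the interior term, the pointwise Bochner inequality and $V|_{N\times[0,\infty)}\ge t$ give $(s,(\scr{D}_t^M)^2 s)_{N\times[0,\infty)}\ge(s,(\st{D}^M)^2 s)_{N\times[0,\infty)}+t\|s\|^2$. For the boundary term one drops the non-negative quantity $(\scr{R}_N s,A_t\scr{R}_N s)_N$, uses Green's formula for $\nabla^{\E}$ to write $(s,\nabla^{\E}_\nu s)_N=\|\nabla^{\E}s\|^2-(s,(\nabla^{\E})^\ast\nabla^{\E}s)_{N\times[0,\infty)}$, invokes the Lichnerowicz--Weitzenb\"ock identity $(\st{D}^M)^2=(\nabla^{\E})^\ast\nabla^{\E}+\R^M$ with $\R^M$ a bounded endomorphism (product, respectively pulled back from the compact $N$, on the cylindrical end), and absorbs the mean-curvature term via the trace inequality $\|s\|_N^2\le(1+\delta^{-1})\|s\|^2+\delta\|\nabla^{\E}s\|^2$ with $\delta$ small. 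Combining the pieces yields the displayed estimate with $t_\lambda$ depending only on $\lambda$; hence $\index(\scr{D}_t^{M,+}\upharpoonright N\times[0,\infty),B_{\le 0}(-A_t^+))_\lambda=0$ for $t\gg 0$, and as $\lambda$ was arbitrary the limit is $0$ in $R^{-\infty}(T)$.

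I do not anticipate a serious obstacle: the argument is a streamlined version of Theorem~\ref{prop:RDepend}, and the positivity of $V$ on all of $N\times[0,\infty)$---which is exactly what powers Proposition~\ref{prop:DMFredholm}---eliminates the delicate excision part of that proof. The only care required is bookkeeping the signs in Green's formula along $N$ (the inward normal of $N\times[0,\infty)$ being $-\nu$) and justifying the cutoff passage from compactly supported sections to the APS domain; both are routine, the latter because $N$ is compact and the zeroth-order part of $\scr{D}_t^M$ grows exponentially on the end.
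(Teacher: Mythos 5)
Your proposal is correct and follows essentially the same route as the paper's proof, which is given only as a sketch: fix an isotypical component, establish the a priori estimate $\|\scr{D}_t^M s\|^2 \ge \tfrac12\|\nabla^{\E}s\|^2+(t-t_\lambda)\|s\|^2$ via Green's formula, bound the interior term using \eqref{eqn:VBoundM1} together with the strict positivity of $|v_M|^2+f^2\vartheta^2$ on $N\times[0,\infty)$ so that the terms quadratic in $t$ dominate, and handle the boundary term exactly as in Theorem \ref{prop:RDepend}. The extra details you supply (invertibility of $(A_t)_\lambda$ from Proposition \ref{prop:InvertBoundary}, identification of the adjoint realization, and the cutoff passage from compactly supported sections to the APS domain) are consistent with, and merely make explicit, what the paper leaves implicit.
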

\begin{proof}
The proof is analogous (but much easier) than Theorem \ref{prop:RDepend}; as in that case, we prove that there is a constant $t_{\lambda}$ such that for $t>t_{\lambda}$ the inequality
\[ \|\scr{D}^{N\times [0,\infty)}_t s\|^2 \ge \tfrac{1}{2}\|\nabla s\|^2+(t-t_{\lambda})\|s\|^2 \]
holds for all smooth compactly supported sections $s \in C^\infty_c(N\times [0,\infty),\E)_\lambda$ satisfying $(s,A_ts)_{N}\ge 0$.  Using Green's formula, this involves finding lower bounds for a term from the interior of $N\times [0,\infty)$ and a term from the boundary.  The term from the interior $N\times (0,\infty)$ is handled using \eqref{eqn:VBoundM1}, noting that for $t$ sufficiently large, the summands which are quadratic in $t$ dominate, since $|v_M|^2+\theta^2>0$ on $N\times [0,\infty)$.  The term from the boundary is handled as in Theorem \ref{prop:RDepend}.
\end{proof}

\begin{corollary}
For all $t\ge 1$ $\index(\scr{D}_t^M)=\index_{\tn{APS},\beta}(\scr{D},v)$.
\end{corollary}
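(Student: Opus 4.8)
The plan is simply to assemble the three facts established just before the corollary, keeping track of one fixed isotypical component at a time. First I would fix $\lambda \in \Irr(T)$ and recall the splitting-theorem identity already proved for the partition $M = U \cup_N (N \times [0,\infty))$, namely
\[ \index(\scr{D}_t^M)=\index(\scr{D}^+_t\upharpoonright U,B_{<0}(A_t^+))+\index(\scr{D}_t^{M,+}\upharpoonright N\times [0,\infty),B_{\le 0}(-A_t^+)), \]
valid for every $t \ge 1$. Its hypotheses hold: Proposition \ref{prop:DMFredholm} (more precisely, the Bochner estimate in its proof, which yields $(\scr{D}_t^M)^2_\lambda \ge (\st{D}^2+V)_\lambda$ with $V$ proper and bounded below, together with the analogous estimate for the adjoint) makes both $\scr{D}_t^M$ and $(\scr{D}_t^M)^\ast$ $T$-coercive, and $A_t^+$ is an adapted boundary operator for $\scr{D}_t^{M,+}\upharpoonright U$ by the computation preceding Proposition \ref{prop:InvertBoundary}.

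Next I would restrict to the $\lambda$-isotypical component and let $t \to \infty$. By Proposition \ref{prop:InvertBoundary}, $(A_t)_\lambda$ is invertible for $t \gg 0$, so for such $t$ the first summand is the index of the odd self-adjoint operator $(\scr{D}_t\upharpoonright U, B_{<0}(A_t))_\lambda$, and by Corollary \ref{cor:tDepend} (applied with $U_\beta$ in place of $U_R$) it stabilizes, with limit $\index_{\tn{APS},\beta}(\scr{D},v)_\lambda$. The second summand tends to $0$ by the unlabeled Proposition immediately preceding the corollary. Hence $\index(\scr{D}_t^M)_\lambda = \index_{\tn{APS},\beta}(\scr{D},v)_\lambda$ for all $t$ sufficiently large (depending on $\lambda$).

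Finally I would promote this to all $t \ge 1$ using $t$-independence of the left-hand side: by Proposition \ref{prop:DMFredholm} the map $t \mapsto b(\scr{D}_t^M)_\lambda$ is norm-continuous on $[1,\infty)$, and $b(\scr{D}_t^M)_\lambda$ is an odd self-adjoint Fredholm operator with the same index as $(\scr{D}_t^M)_\lambda$ (Remark \ref{rem:CommonCore}), so $\index(\scr{D}_t^M)_\lambda$ is constant for $t \ge 1$; combined with the previous paragraph this gives $\index(\scr{D}_t^M)_\lambda = \index_{\tn{APS},\beta}(\scr{D},v)_\lambda$ for every $t \ge 1$, and since $\lambda$ was arbitrary the identity holds in $R^{-\infty}(T)$. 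There is no substantial obstacle here — all the analysis lives in the preceding propositions — and the only point requiring care is that the two $t\to\infty$ limits on the right are taken legitimately on each fixed isotypical component, which is exactly what Corollary \ref{cor:tDepend} and the preceding Proposition supply.
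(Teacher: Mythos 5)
Your proposal is correct and follows essentially the same route as the paper: apply the splitting theorem to $M=U\cup_N(N\times[0,\infty))$, identify the $U$-summand's $t\to\infty$ limit with $\index_{\tn{APS},\beta}(\scr{D},v)$ via Proposition \ref{prop:InvertBoundary} and Corollary \ref{cor:tDepend}, kill the cylindrical summand by the preceding Proposition, and use $t$-independence of $\index(\scr{D}_t^M)$ from Proposition \ref{prop:DMFredholm}. Your version merely makes explicit the per-isotypical-component bookkeeping that the paper leaves implicit.
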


\subsection{Deformation to a transversally elliptic operator.}\label{sec:DefTransEll}
The next step is to describe a transversally elliptic operator on a compact manifold containing $M$ with the same index as $\scr{D}_t^M$.  We follow a strategy of Braverman \cite[Section 14]{Braverman2002}.  The analytic details of this strategy were elaborated by Ma-Zhang \cite{MaZhangBravermanIndex}.

Recall $M=U \cup_N N \times [0,\infty)$, and $r$ denoted the second projection $N \times [0,\infty) \rightarrow [0,\infty)$.  The metrics and connections take a product form on the cylindrical end $\Cyl_N=N \times (1,\infty)$.  Introduce the new coordinate $w=r^{-1}$ on $N \times (\tfrac{1}{2},\infty)$.  In terms of $w$ the cylindrical end of $M$ is $\Cyl_N=N \times (0,1)$, with $w \rightarrow 0$ being at infinity in $M$, and
\begin{equation} 
\label{eqn:Mmetricw}
g_M|_{\Cyl_N}=dr^2+g_N=w^{-4}dw^2+g_N,
\end{equation}
where $g_N$ is a Riemannian metric on $N$.

Let $DM$ denote the double of $M$, a compact $T$-manifold constructed by gluing a second copy $M^\prime$ of $M$ with reversed orientation along the cylindrical ends:
\[ DM=M \cup_{\Cyl_N} M^\prime.\]
The gluing identifies $N \times \{w\} \subset M$ with $N \times \{1-w \} \subset M^\prime$.  The construction of $DM$ has a $\bZ_2$ symmetry about the hypersurface $w=\tfrac{1}{2}$; in particular it follows that the coordinate function $w$ extends beyond $M$, and identifies a neighborhood of $\Cyl_N$ in $DM$ with $N \times (-1,2)$.  Choose a $T$-invariant Riemannian metric $g_{DM}$ on $DM$ that has a product form on $\Cyl_N$:
\begin{equation} 
\label{eqn:DMmetricw}
g_{\sDM}|_{\Cyl_N}=dw^2+g_N=r^{-4}dr^2+g_N.
\end{equation}
Comparing \eqref{eqn:Mmetricw}, \eqref{eqn:DMmetricw}, the Riemannian volume elements are related by a Jacobian factor $w^{-2}=r^2$ on $\Cyl_N$.

Considering the zeroth order term in $\scr{D}_1^M$, we are led to consider the odd, self-adjoint bundle map:
\begin{equation}
\label{eqn:defpsi}
\psi=2f\theta\wh{\otimes}1-\i\wh{\otimes} \c(v_M).
\end{equation}
In terms of $\psi$, $\scr{D}_1^M=\st{D}^M+h\psi$.  We will write $\psi^+$ (resp. $\psi^-=(\psi^+)^\ast$) when we wish to emphasize the component of $\psi$ mapping $\E^+$ to $\E^-$ (resp. $\E^-$ to $\E^+$).  

Choose a vector bundle $F$ on $M$ such that $\V^+:=\E^+\oplus F$ is trivial, and let $\V^-=\E^-\oplus F$, $\V=\V^+\oplus \V^-$.  The bundle endomorphism $\psi$ of $\E$ extends to the bundle endomorphism $\psi\oplus \id_F$ of $\V$.  In order to simplify notation, when there is little risk of confusion we will denote this extended bundle endomorphism by $\psi$.

On $\Cyl_N$, $\psi$ is invertible, hence $\psi^+ \colon \V^+\rightarrow \V^-$ restricts to an isomorphism over $\Cyl_N$.  In particular the restriction of $\V^-$ to $\Cyl_N$ is trivial as well.  Using the trivializations on $\Cyl_N$, we extend $\V$ and $\psi$ trivially from $M$ to $DM$.  To keep the notation simple, we will continue to denote these extensions by $\V$, $\psi$ respectively.  Thus $\V^{\pm}\upharpoonright M^\prime$ have fixed trivializations, in terms of which $\psi^{\pm}\upharpoonright M^\prime$ become the identity map.

The operator $\st{D}^M$ on $M$ acting on sections of $\E^{\pm}$ extends to the operator $\st{D}^M\oplus 0_F$ acting on sections of $\V^{\pm}=\E^{\pm}\oplus F$, where $0_F \colon F \rightarrow F$ denotes the zero operator.  In order to simplify notation we will denote this extended operator by $\st{D}^M$.  Consider the function $\rho$ on $M$ given by
\[ \rho:=h^{-1}, \qquad \rho|_{\Cyl_N}=h^{-1}=e^{-r}=e^{-1/w}. \] 
It follows from the formula for $\rho|_{\Cyl_N}$ that $\rho$ can be extended \emph{smoothly} by $0$ to $DM$.  We continue to denote this extension by $\rho$.

Similar to Braverman \cite[Section 14]{Braverman2002}, we define a differential operator on $DM$ that we will denote
\[ \rho \st{D}^{M,+}, \]
as follows.  Given $s \in C^\infty(DM,\V^+)$, we first restrict to $M \subset DM$ to obtain a section $s|_M \in C^\infty(M,\V^+)$, to which we can apply the operator $\st{D}^{M,+}$ followed by multiplication by $\rho|_M$.  The result is a smooth section of $\V^-|_M$, which we then extend by $0$ to a section of $\V^-$.

\begin{lemma}
$\rho \st{D}^{M,+}$ defined above is a first order differential operator on $DM$ with support contained in the closure of $M$ in $DM$.
\end{lemma}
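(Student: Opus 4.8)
The plan is to reduce the statement to a local one near the single hypersurface of $DM$ along which $M$ ``ends'', and there to exploit the infinite-order vanishing of $\rho$. Away from that hypersurface the claim is immediate: over the open set $DM\setminus\ol{M}$ the operator $\rho\st{D}^{M,+}$ is by construction identically zero, while over the open subset $M$ it is the composition of the first-order differential operator $\st{D}^{M,+}$ with multiplication by the smooth function $\rho=h^{-1}$, hence a first-order differential operator there with smooth coefficients. The only point that genuinely needs an argument — and the reason the lemma is not completely trivial, since ``restrict to $M$, then extend by zero'' is not a local operation in general — is whether these two local descriptions glue to a smooth first-order operator across $\partial M\subset DM$, i.e. near infinity of $M$: a priori one expects a jump in the coefficients there, and it is the rapid decay of $\rho$ that should remove it.

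I would carry this out in the coordinate $w$. As recalled around \eqref{eqn:Mmetricw}, a neighborhood of $\Cyl_N$ in $DM$ is $N\times(-1,2)$, with $M=\{w>0\}$, $DM\setminus\ol{M}=\{w<0\}$ and $\partial M=\{w=0\}$, and on $\Cyl_N=N\times(0,1)$ the metric and connections are in product form, so $\partial_r=-w^2\partial_w$ is the unit normal. In $w$ and in a smooth local frame for $\V$ near $\{w=0\}$, the coefficients of $\st{D}^{M,+}$ are then $r$-independent up to the factor $-w^2$ coming from $\partial_r=-w^2\partial_w$; in particular they are bounded, and at worst grow polynomially in $r=1/w$, as $w\to 0^+$. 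Since $\rho|_{\Cyl_N}=h^{-1}=e^{-r}=e^{-1/w}$, every coefficient of $\rho\st{D}^{M,+}$ is $e^{-1/w}$ times such a function, and because $e^{-1/w}$ together with all of its derivatives tends to $0$ as $w\to 0^+$, each of these extends by $0$ to a $C^\infty$ function on $N\times(-1,1)$ vanishing to infinite order at $\{w=0\}$. Hence $\rho\st{D}^{M,+}$, extended by the zero operator to $\{w\le 0\}$, is a first-order differential operator with smooth coefficients on $N\times(-1,1)$, and therefore on all of $DM$. I expect this ``flatness across the seam'' to be essentially the whole content of the proof: the two ingredients to pin down are the explicit shape of $\st{D}^{M,+}$ in the $w$-coordinate on the product end, and the standard fact that the extension of $e^{-1/w}$ by $0$ is smooth.

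For the support statement, every coefficient of $\rho\st{D}^{M,+}$ in a local frame and coordinates is $\rho$ times a coefficient of $\st{D}^{M,+}$, and so vanishes wherever $\rho$ vanishes. Since $\rho=h^{-1}>0$ everywhere on $M$ and $\rho$ was extended by $0$ outside $M$, we have $\{\rho\ne 0\}=M$, so $\supp(\rho)$ is the closure of $M$ in $DM$; therefore $\supp(\rho\st{D}^{M,+})\subseteq\ol{M}$.
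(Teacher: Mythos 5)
Your proof is correct and follows essentially the same route as the paper: localize at the seam $\{w=0\}$ of $DM$, use the product form of the metrics and connections on $\Cyl_N$ together with $\partial_r=-w^2\partial_w$, and conclude from the behaviour of $\rho=e^{-1/w}$ there, with the support statement being immediate since $\rho$ vanishes identically off $M$. The only minor difference in emphasis is that the paper shows that $\st{D}^{M,+}$ itself extends smoothly over $N\times(-1,1)$ (writing $\st{D}^M=\c(\nu)\nabla^{\E}_{\nu}+\c(\nu)A^N$ with $A^N$ and $\c(\nu)$ independent of $w$ and $\nu=w^2\partial_w$ smooth across $w=0$), so that only the smoothness of the zero-extension of $\rho$ is needed, whereas you invoke the infinite-order flatness of $e^{-1/w}$ to absorb possible growth of the coefficients; both versions close the argument.
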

\begin{proof}
To prove the lemma it suffices to show that the differential operator $\st{D}^{M,+} \upharpoonright \Cyl_N$ (recall $\Cyl_N=N \times (0,1)$ in terms of the coordinate $w$) can be smoothly extended to a differential operator over $N \times (-1,1) \subset DM$.  Since $\rho$ vanishes identically on $DM \setminus M$, the result follows.

It is convenient to use the `canonical' adapted operator $A^N$ for $\st{D}^M$ along the hypersurface $N \times \{\tfrac{1}{2}\}$, see equation \eqref{eqn:CanonicalBoundary}.  Since $N \times \{\tfrac{1}{2}\}$ is contained in the cylindrical end, on $\Cyl_N$ we have an equality
\[ \st{D}^M=\c(\nu)\nabla^{\E}_{\nu}+\c(\nu)A^N \]
where $\nu=-\partial_r$ is the inward unit normal vector for the metric $g$ on $M$ used to define $\st{D}^M$.  Since $A^N$ is a differential operator on $N$, we may regard it as a differential operator on $N \times (-1,1)$ which is independent of $w \in (-1,1)$.  Recall that the metrics and connections take a product form on $\Cyl_N$, so extend to $N \times (-1,1)$.  Since $-\partial_r=w^2\partial_w$ extends smoothly past $w=0$, there is no difficulty in extending the operator $\nabla^{\E}_{\nu}$ to $N \times (-1,1)$.  The endomorphism $\c(\nu)$ satisfies $\c(\nu)^2=-1$, and is independent of $w$ on $\Cyl_N$, so also extends.
\end{proof}

As $\rho \st{D}^{M,+}$ is a first-order differential operator, we may consider it as an unbounded Hilbert space operator with domain $H^1(DM,\V^+)\subset L^2(DM,\V^+)$.  Fix a $T$-equivariant invertible pseudo-differential operator $R$ acting on sections of $\V^+$ over $DM$ with symbol 
\[ \sigma_R(\xi)=\langle \xi \rangle^{-1}:=(1+|\xi|_{\sDM}^2)^{-1/2}.\]  
Similar to Braverman \cite[Section 14]{Braverman2002} we define a continuous family $t \in [0,1]$ of zeroth order pseudodifferential operators on $DM$
\begin{equation} 
\label{eqn:DefPt}
P_t=(1-t)\psi^+ +t\psi^+ R + \rho \st{D}^{M,+} R.
\end{equation}
Then $P_t$ extends to a bounded linear operator $L^2(DM,\V^+) \rightarrow L^2(DM,\V^-)$.

Recall that a pseudo-differential operator $P$ on a compact $G$-manifold $X$ is called \emph{transversally elliptic} if the support of the symbol $\sigma(P)$ (the subset of $T^\ast X$ where $\sigma(P)$ fails to be invertible) intersected with the conormal space to the orbits 
\[T_G^\ast X=\{\xi \in T^\ast X|\pair{\xi}{\alpha_X}=0, \forall \alpha \in \g \} \]
is compact.  References for transversally elliptic operators include \cite{AtiyahTransEll, ParVerTransEll}.

\begin{proposition}
\label{prop:TransEll}
For $t \in [0,1)$ the operator $P_t$ is transversally elliptic.
\end{proposition}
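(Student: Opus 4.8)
The plan is to compute a representative of the symbol of the zeroth-order operator $P_t$, as a $T$-equivariant bundle map $\pi^\ast\V^+\to\pi^\ast\V^-$ over $T^\ast DM$, and then to check directly that the set where it fails to be invertible meets the conormal bundle $T^\ast_T DM$ to the $T$-orbits in a compact set. Using the symbol calculus and the definition \eqref{eqn:DefPt}, such a representative is
\[ \sigma_{P_t}(x,\xi)=(1-t)\psi^+(x)+t\psi^+(x)\langle\xi\rangle^{-1}+\rho(x)\,\sigma_{\st{D}^M}(\xi)^+\langle\xi\rangle^{-1}, \]
where we use the convention for the principal symbol appropriate to (transversally) elliptic operators, with the factor of $\sqrt{-1}$ (cf.\ the footnote in Section \ref{sec:EllBd}), so that $\sigma_{\st{D}^M}(\eta)^\ast=\sigma_{\st{D}^M}(\eta)$ and $\sigma_{\st{D}^M}(\eta)^2=|\eta|^2\id$; recall also that $\st{D}^M$ was extended to $\E\oplus F$ by the zero operator on $F$, so $\sigma_{\st{D}^M}(\xi)^+$ equals $\sqrt{-1}\,\c(\xi)^+$ on $\E^+$ and $0$ on $F$. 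At $\xi=0$ this symbol equals $\psi^+(x)$, and as $|\xi|\to\infty$ it converges, uniformly over compact subsets of $DM$, to $(1-t)\psi^+(x)+\rho(x)\sigma_{\st{D}^M}(\xi/|\xi|)^+$.

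The first step is to confine the characteristic set of $P_t$ to a compact part of $DM$. On $DM\setminus M$ we have $\rho\equiv 0$ and, in the fixed trivializations, $\psi^+=\id$, so $\sigma_{P_t}=\big((1-t)+t\langle\xi\rangle^{-1}\big)\id$, which is invertible precisely because $t<1$. More generally, on the cylindrical end $\Cyl_N$ the endomorphism $\psi$ is the $r$-independent pullback of $\psi|_N$, and since $|v_M|^2+f^2\theta^2>0$ on the compact hypersurface $N$ we have $\psi^2\ge\varepsilon>0$ there; hence $(1-t)\psi^+ +t\psi^+\langle\xi\rangle^{-1}=\psi^+\big((1-t)+t\langle\xi\rangle^{-1}\big)$ has all singular values $\ge\sqrt\varepsilon\,(1-t)$, uniformly in $\xi$, while $\|\rho\,\sigma_{\st{D}^M}(\xi)^+\langle\xi\rangle^{-1}\|\le\rho(x)\to 0$ at infinity in $\Cyl_N$. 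So $\sigma_{P_t}$ is invertible wherever $\rho(x)<\tfrac12\sqrt\varepsilon\,(1-t)$, and the characteristic set is therefore contained in $\pi^{-1}(K_0)$ for a compact $T$-invariant $K_0\subset DM$ on which $\rho\ge\rho_0$ for some $\rho_0>0$ (note $\rho\equiv 1$ on the copy of $U$ in $DM$, and $\psi$ is invertible on all of $M$ outside the compact degeneracy locus $K=\{v_M=0,\ \ker\theta^2\ne 0\}\subset\mathrm{int}(U)$, the last containment using that on $N=\partial U$ one has $|v_M|>0$ or $\theta^2=1$).

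The main step is to show that over $K_0$ the symbol is invertible for all conormal $\xi$ with $|\xi|$ large. Since $\sigma_{P_t}(x,\xi)\to(1-t)\psi^+(x)+\rho(x)\sigma_{\st{D}^M}(\xi/|\xi|)^+$ uniformly and the unit conormal sphere bundle over $K_0$ is compact, it suffices to prove that $q_t(x,\eta):=(1-t)\psi^+(x)+\rho(x)\sigma_{\st{D}^M}(\eta)^+$ is invertible for every $x\in K_0$ and every unit conormal covector $\eta$. On the summand $F$ it is $(1-t)\id_F$, invertible; on $\E^+$ the crucial point---and this is where the conormal hypothesis enters---is that $\psi=2f\theta-\i\c(v_M)$ anti-commutes with $\c(\eta)$ whenever $\eta$ is conormal: indeed $\theta$ is odd and graded-commutes with Clifford multiplication, while $v_M$ is a fundamental vector field of $\t$, hence tangent to the $T$-orbits and orthogonal to $\eta$, so $\c(v_M)$ also anti-commutes with $\c(\eta)$. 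Consequently the Bochner-type identity
\[ q_t(x,\eta)^\ast q_t(x,\eta)=(1-t)^2\big(4f^2\theta^2+|v_M|^2\big)(x)+\rho(x)^2 \]
holds on $\E^+$ (the cross term cancels by anti-commutation and $\sigma_{\st{D}^M}(\eta)^2=1$), and its right-hand side is $\ge\rho_0^2>0$ on $K_0$; thus $q_t(x,\eta)$ is injective, and hence invertible since $\E^+$ and $\E^-$ have equal rank ($\psi^+$ being an isomorphism over $\Cyl_N$). Combining the two steps, the characteristic set of $P_t$ meets $T^\ast_T DM$ in a closed subset of a compact ball bundle over $K_0$, hence in a compact set, which is precisely transversal ellipticity.

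The step I expect to be the main obstacle is the use of conormality to derive the anti-commutation $\psi\c(\eta)=-\c(\eta)\psi$ and the ensuing positivity of $q_t(x,\eta)^\ast q_t(x,\eta)$; this is exactly what distinguishes ``transversally elliptic'' from ``elliptic'', since for a non-conormal $\xi$ the identity fails wherever $v_M$ has a component along $\xi$, and indeed $P_t$ is genuinely not elliptic near the degeneracy locus $K$ of $\psi$. A secondary point needing care is the pseudodifferential bookkeeping---verifying that the lower-order terms, and the discrepancy between $\langle\xi\rangle^{-1}$ and $|\xi|^{-1}$, vanish uniformly as $|\xi|\to\infty$ over $K_0$---which also clarifies the role of $t<1$: at $t=1$ the symbol $\psi^+\langle\xi\rangle^{-1}+\rho\,\sigma_{\st{D}^M}(\xi)^+\langle\xi\rangle^{-1}$ decays at infinity over $DM\setminus M$, so $P_1$ is not even elliptic at infinity there.
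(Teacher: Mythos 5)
Your proof is correct, and it reaches the same conclusion as the paper from the same geometric input --- namely that $v_M$ is tangent to the $T$-orbits, so a conormal covector $\eta \in T^\ast_T DM$ is orthogonal to $v_M$ and $\c(\eta)$ anti-commutes with $\psi$ --- but the mechanics differ. The paper's proof is a one-stroke support computation: using $\rho h=1$ on $M$ it absorbs the $-\i(1-t)\c(v_M)$ part of $(1-t)\psi^+$ into the Clifford factor, writing the symbol on $M$ as the product of $2(1-t)f\theta$ with $\i\rho\,\c\big(\langle\xi\rangle^{-1}\xi-(1-t)hv_M\big)$ (see \eqref{eqn:SymbolPt}), and then invokes the fact that the support of a product symbol is the intersection of the supports; on $T^\ast_T M$ the Clifford factor is non-invertible only where $\xi=0$ and $v_M=0$, and intersecting with the support of $\theta$ lands in the compact set $U$, while outside $M$ the symbol is $(1-t)\psi^+$, invertible for $t<1$. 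You instead avoid the product-symbol lemma and argue by direct estimates: first confining the characteristic set to a compact base region $K_0$ (using the decay of $\rho$ and the uniform invertibility of $\psi$ on the end, plus $t<1$ outside $M$), then proving invertibility for large conormal $\xi$ over $K_0$ via the sum-of-squares identity coming from the anti-commutation. Your version is more elementary and self-contained (it also makes explicit why invertibility, not just injectivity, holds, and why $t<1$ is needed), at the cost of splitting the argument into two regimes; the paper's version treats all conormal covectors, small and large, simultaneously and is closer to the Paradan-style ``pushed symbol'' formalism used later in Section \ref{sec:Paradan}.

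One small inaccuracy in your step 1: on the cylindrical end $\psi$ is not literally the $r$-independent pullback of $\psi|_N$ --- only the metrics and connections are made product-like there, while $v$, $f$, $\theta$ are simply pulled back from $\Y$ and do vary with $r$. The uniform bound $\psi^2\ge\varepsilon>0$ on the end that you need is nevertheless correct: $\ol{M}$ is a compact subset of $\Y$ meeting no $Z_\gamma$ with $\gamma\ne\beta$, and $\psi$ is invertible on the compact set $\ol{M}\setminus\inte(U)$, which is exactly the bound \eqref{eqn:boundpsi} used in the paper. With that justification substituted, your argument goes through as written.
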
 
\begin{proof}
The symbol of $P_t$ is a bundle map $\V^+ \rightarrow \V^-$ given by\footnote{Here we revert to the more common convention for symbols, such that $\sigma_{\st{D}^M}(\xi)=\i \c(\xi)$.}
\begin{equation} 
\label{eqn:SymbolPt}
\sigma_{P_t}(\xi)=(1-t)\psi^++\big(\i\rho\wh{\otimes}\langle\xi \rangle^{-1}\c(\xi)\big)\oplus 0_F, 
\end{equation}
Outside $M$ this equals $(1-t)\psi^+$, which is invertible for $t \ne 1$.  On $M$, $\rho=h^{-1}$ and $\psi=(2f\theta\wh{\otimes}1-\i\wh{\otimes} \c(v_M)) \oplus \id_F$, thus
\[ \sigma_{P_t}(\xi)|_M=\Big(2(1-t)f\theta\wh{\otimes}1+\i \rho \wh{\otimes} \c\big(\langle\xi \rangle^{-1}\xi-(1-t)h v_M\big)\Big) \oplus \id_F.\]
The expression in the large brackets is a product symbol (see for example \cite[p.56]{BerlineVergneTransversallyElliptic} for a discussion of products), and the support of a product symbol is the intersection of the supports of the two factors.  Since $v_M$ is tangent to the orbit directions, the intersection of the support of $$\c\big(\langle\xi \rangle^{-1}\xi-(1-t)h v_M\big)$$ with $T_T^\ast M$ (the conormal directions to the orbits) is the vanishing locus of $v_M$ (viewed as a subset of the zero section of $T^\ast_TM$), and the latter intersects the support of $(1-t)f\theta$ in a closed subset of the compact set $U$.
\end{proof}

We remind the reader of some properties of the operator $\scr{D}_1^M$ on the cylindrical end, to be used in the proof of the next lemma.  Recall from the proof of Proposition \ref{prop:DMFredholm} that, on the $\lambda$-isotypical subspace, one has an inequality
\begin{equation} 
\label{eqn:ineqonM}
(\scr{D}^M_1)^2_\lambda \ge ((\st{D}^M)^2+V)_\lambda 
\end{equation}
where $V$ is a potential function (depending on $\lambda$).  Moreover there exists an $r_0>1$, and constant $V_0>0$ such that 
\begin{equation}
\label{eqn:ineqVonW}
V|_{N\times [r_0,\infty)} \ge V_0e^{2r}.
\end{equation}
Let
\[ W=N\times [r_0,\infty),\]
a compact manifold with boundary, and let 
\[ \scr{D}_1^W=\scr{D}^M_1|_W \]
be the restriction of $\scr{D}^M_1$ to $W$.  Hence
\begin{equation}
\label{eqn:DonW}
\scr{D}^W_1=\st{D}^W+e^r\psi, \qquad \st{D}^W=\c(\partial_r)\partial_r+\st{D}^N, 
\end{equation}
where $\psi$ is the self-adjoint bundle endomorphism introduced in \eqref{eqn:psiDM}, $\st{D}^W$ is the restriction of $\st{D}^M$ to $W$, and $\st{D}^N=-\c(\partial_r)A^N$ is a Dirac operator on $N$ not depending on $r$. Recall $\ol{M}$ is a compact subset of $\Y$ and $\ol{M}\cap U_\gamma=\emptyset$ for $\gamma \ne \beta$, so that $\psi$ is invertible on the compact set $\ol{M}\setminus \tn{int}(U)\supset W$.  It follows from this and $\psi=\psi^\ast$ that there are constants $0<c_\psi\le C_\psi$ such that
\begin{equation}
\label{eqn:boundpsi}
c_\psi^2|v|^2\le \pair{\psi^2 v}{v}\le C_\psi^2 |v|^2 
\end{equation}
for all $v \in \V_m$ and $m \in W$.

The square of $\scr{D}^W_1$ is given by
\begin{equation} 
\label{eqn:squareonW}
(\scr{D}^W_1)^2=(\st{D}^W)^2+[\st{D}^W,e^r\psi]+e^{2r}\psi^2.
\end{equation}
The operator $[\st{D}^W,e^r\psi]$ commutes with $T$-invariant functions, and moreover \eqref{eqn:ineqonM}, \eqref{eqn:ineqVonW} imply
\begin{equation} 
\label{eqn:ineqonW}
([\st{D}^W,e^r\psi]+e^{2r}\psi^2)_\lambda \ge V_0e^{2r}.
\end{equation}
 
The next lemma is \cite[Lemma 2.5]{MaZhangBravermanIndex} due to Ma and Zhang.  It shows that solutions of $\scr{D}_1^Ms=0$ decay very rapidly, as well as their first derivatives, as $r\rightarrow \infty$ along the cylindrical end. For the convenience of the reader we have included a proof here, in the notation that we have established, closely following the argument in \emph{loc. cit}.  
\begin{lemma}[\cite{MaZhangBravermanIndex}, Lemma 2.5]
\label{lem:SqrInt}
Let $W=N\times (r_0,\infty) \subset M$ be as in the paragraph above.  Let $s$ be a smooth section of $\E$ over $W$ and suppose $e^{-r}s \in L^2(W,\E)_\lambda$ and $\scr{D}^W_1s=0$.  Then 
\[ e^{mr}s, \quad e^{mr}\partial_r s, \quad e^{mr}\st{D}^Ns \in L^2(W,\E)_\lambda\] 
for all $m \in \bR$.
\end{lemma}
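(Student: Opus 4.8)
The plan is to run a weighted $L^2$ (Agmon-type) estimate, exploiting that the potential term in $(\scr{D}^W_1)^2$ grows like $e^{2r}$ along the cylindrical end; this should force $s$, and then its first derivatives, to decay faster than any exponential. I would follow the argument of Ma and Zhang \cite[Lemma 2.5]{MaZhangBravermanIndex} in the present notation. First I would record the two inputs already available on $W$: the Bochner identity \eqref{eqn:squareonW} together with the lower bound \eqref{eqn:ineqonW} give, for every smooth compactly supported section $u$ of $\E$ over $W$ lying in the $\lambda$-isotypical subspace,
\[ \|\scr{D}^W_1 u\|^2 = \|\st{D}^W u\|^2 + \bigl(u,([\st{D}^W,e^r\psi]+e^{2r}\psi^2)\,u\bigr)\ \ge\ \|\st{D}^W u\|^2 + V_0\,\|e^r u\|^2 . \]
Moreover, since $\c(\partial_r)^2=-1$ is a fibrewise isometry and any scalar function $\varphi(r)$ commutes with $e^r\psi$, equation \eqref{eqn:DonW} gives $\scr{D}^W_1(\varphi u)=\varphi\,\scr{D}^W_1 u+\c(\partial_r)\varphi'\,u$; so if $\scr{D}^W_1 s=0$ then $\|\scr{D}^W_1(\varphi s)\|^2=\int_W|\varphi'|^2|s|^2$.

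Next I would prove $e^{mr}s\in L^2$. Fix $m$; since $e^{2mr}\le e^{-2r}$ for $m\le -1$, I may assume $m>-1$, and since $e^{mr}$ is bounded on $\{r_0<r<r_1\}$ while $e^{-r}s\in L^2$ controls $s$ there, it suffices to bound $\int_{r\ge r_1}e^{2mr}|s|^2$ for $r_1$ large (depending on $m$). I would introduce: a cutoff $\eta(r)$ equal to $0$ near $r_0$ and to $1$ for $r\ge r_1$; for a large parameter $R$, a cutoff $\rho_R(r)$ equal to $1$ for $r\le R$, to $0$ for $r\ge R+1$, with $|\rho_R'|\le 2$; and a weight $\varphi=\varphi_R(r)$ equal to $mr$ for $r\le R/2$, then decreasing with constant slope $-(m+4)$ on $[R/2,R]$, then with slope $-1$ beyond $R$ — so that $|\varphi'|\le |m|+4$ globally, $\varphi\le mr$ everywhere, and $\varphi(r)\le -r$ for $r\ge R$. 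Applying the basic inequality to $u=\eta\rho_R e^{\varphi}s$ and expanding $(\eta\rho_R e^{\varphi})'$: the $\eta'$-contribution is a finite constant $C_1$ independent of $R$ (its support is a fixed compact collar on which $e^{2\varphi}=e^{2mr}$ and $\int|s|^2$ are finite), the $\rho_R'$-contribution is at most $4\int_{R\le r\le R+1}e^{2\varphi}|s|^2\le 4\int_{R\le r\le R+1}e^{-2r}|s|^2\to 0$ as $R\to\infty$, and the $\varphi'$-contribution is at most $(|m|+4)^2\int_W\eta^2\rho_R^2 e^{2\varphi}|s|^2$. Enlarging $r_1$ so that $V_0 e^{2r}-3(|m|+4)^2\ge 1$ for $r\ge r_1$, I would move the $\varphi'$-term against $V_0\|e^r u\|^2$, drop the (nonnegative) contributions from $r>R/2$, and use $\varphi=mr$, $\rho_R=1$ on $[r_1,R/2]$ to get
\[ \int_{r_1\le r\le R/2}e^{2mr}|s|^2\ \le\ C_1 + 4\!\!\int_{R\le r\le R+1}\!\!e^{-2r}|s|^2 ; \]
letting $R\to\infty$ yields $\int_{r\ge r_1}e^{2mr}|s|^2\le C_1<\infty$, hence $e^{mr}s\in L^2(W,\E)_\lambda$ for every $m$.

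Finally I would handle the derivatives. On the cylinder $\st{D}^W=\c(\partial_r)\partial_r+\st{D}^N$ with $\c(\partial_r)$ anticommuting with the $r$-independent Dirac operator $\st{D}^N$, so for compactly supported $u$ the cross term integrates to zero and $\|\st{D}^W u\|^2=\|\partial_r u\|^2+\|\st{D}^N u\|^2$. Taking $u=\eta\rho_R e^{mr}s$ and using $\|\st{D}^W u\|^2\le\|\scr{D}^W_1 u\|^2$, whose right-hand side is finite and bounded uniformly in $R$ once $e^{mr}s\in L^2$ is known, I obtain $\partial_r u$ and $\st{D}^N u$ bounded in $L^2$ uniformly in $R$; passing to the limit and peeling off the cutoffs via $\partial_r(\eta e^{mr}s)=(\eta'+m\eta)e^{mr}s+\eta e^{mr}\partial_r s$ and $\st{D}^N(\eta e^{mr}s)=\eta e^{mr}\st{D}^N s$ gives $e^{mr}\partial_r s,\ e^{mr}\st{D}^N s\in L^2(W,\E)_\lambda$ for all $m$.

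The step I expect to require the most care is the bookkeeping in the weighted estimate: the weight $\varphi_R$ must be chosen so that simultaneously its derivative stays bounded \emph{independently of} $R$ (so that the $\varphi'$-term is absorbed using only the fixed gap coming from $V_0 e^{2r}$) and $e^{2\varphi_R}$ is dominated by the a priori weight $e^{-2r}$ near the outer cutoff (so that the $\rho_R'$-term vanishes in the limit $R\to\infty$). This balancing, together with the routine verification that each cut-off section genuinely lies in the domain to which the Bochner identity applies and that no boundary terms arise, is precisely the technical content of \cite[Lemma 2.5]{MaZhangBravermanIndex}, which we reproduce.
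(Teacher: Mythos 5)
Your proof is correct, and it rests on the same inputs the paper uses --- the commutator identity $\scr{D}^W_1(\chi s)=\c(\partial_r)\chi' s$ for radial cutoffs $\chi$, the coercive bound \eqref{eqn:ineqonW}, and (for the derivatives) the splitting of $\|\st{D}^W u\|^2$ into $\|\partial_r u\|^2+\|\st{D}^N u\|^2$ as in \eqref{eqn:greengrcomm} --- but the implementation differs from the paper's in a meaningful way. The paper tests against $w=e^{mr}\chi_k$ with $\chi_k$ equal to $1$ near the inner boundary, so a $k$-independent boundary term $B_w$ survives Green's formula, and it runs an induction in the exponent: square-integrability of $e^{(m+1/2)r}s$ is fed through a Cauchy--Schwarz estimate against the $(c_\psi^2+V_0)\|e^rws\|^2$ term to give square-integrability of $e^{(m+1)r}s$, gaining half a power per step; the derivative step likewise squares $w\st{D}^W s=-we^r\psi s$ and keeps a boundary term. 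You instead use interior cutoffs (so no boundary terms at all, at the price of the $R$-independent constant coming from $\eta'$) and a single bent Agmon weight $\varphi_R$ with slope bounded independently of $R$ and with $\varphi_R\le -r$ near the outer cutoff, so the bounded $|\varphi_R'|^2$ is absorbed in one stroke by the $V_0e^{2r}$ gap, the outer error is killed by the hypothesis $e^{-r}s\in L^2$, and all exponents $m$ come out in a single pass with no induction; your derivative step via $\|\st{D}^W u\|^2\le\|\scr{D}^W_1u\|^2=\|(\eta\rho_Re^{mr})'s\|^2$ is also a touch cleaner than the paper's. Two routine points to tidy: smooth the corners of the piecewise-linear $\varphi_R$ (this preserves $|\varphi_R'|\le|m|+4$), and either absorb into $C_1$ the $\varphi'$-contribution from the transition collar of $\eta$, where $V_0e^{2r}$ need not yet dominate, or let $\eta$ transition only where $V_0e^{2r}\ge 3(|m|+4)^2+1$. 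Finally, like the paper you really obtain the conclusion on $\{r\ge r_1\}$ and use smoothness of $s$ near the inner collar for the statement on all of $W$; the paper handles the same issue through its $r_0$-boundary terms, so you are on equal footing there.
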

\begin{remark}
Once the square-integrability of $e^{mr}s$ is in hand, the square-integrability of $e^{mr}\partial_rs, e^{mr}\st{D}^Ns$ is obtained by `bootstrapping' using $\st{D}^Ws=-e^r\psi s$, and indeed one could obtain the same result for higher derivatives of $s$ as well in this way.  Heuristically one might expect the behavior in Lemma \ref{lem:SqrInt} because if $c \ne 0$ is a constant then the ODE $a'(r)=ce^ra(r)$ has solutions of the form $a(r)=\exp(ce^r)$; if $c>0$ then $e^{mr}a^{(n)}(r)$ is not square-integrable on $[0,\infty)$ for any $m$, whereas if $c<0$ then $e^{mr}a^{(n)}(r)$ is square-integrable on $[0,\infty)$ for every $m$.
\end{remark}
\begin{proof}
Let $w=w(r) \in C^\infty_c(\bR)$ be a non-negative function, which we view as a function on $W$.  Using $\scr{D}^W_1s=0$ and since $[\st{D}^W,e^r\psi]$ commutes with $w$ and satisfies \eqref{eqn:ineqonW}, we have
\begin{align}
\label{eqn:step1} 
0&=((\scr{D}^W_1)^2s,w^2s)=\big((\st{D}^W)^2s,w^2s\big)+\big(([\st{D}^W,e^r\psi]+e^{2r}\psi^2)ws,ws\big)\nonumber \\
&\ge \tn{Re}\big((\st{D}^W)^2s,w^2s\big)+V_0(e^{2r}ws,ws).
\end{align}
We apply Green's formula to the first term, and also use $\st{D}^Ws=-e^r\psi s$ (twice), leading to
\begin{align*} 
\tn{Re}\big((\st{D}^W)^2s,w^2s\big)&=\tn{Re}(\st{D}^Ws,w^2\st{D}^Ws+[\st{D}^W,w^2]s)+B_w\\
&=\tn{Re}(e^r\psi s,w^2e^r\psi s-2ww'\c(\partial_r)s)+B_w\\
&\ge c_\psi^2\|e^r ws\|^2-2C_\psi\|e^{r/2}ws\|\cdot \|e^{r/2}w's\|+B_w
\end{align*}
where $B_w$ is a boundary term, involving an integral over $\partial W=N\times \{r_0\}$; and in the last line we used $\psi^\ast \psi\ge c_\psi^2$, the Cauchy-Schwartz inequality, $|\c(\partial_r)s|=|s|$ and $|\psi|\le C_\psi$.  Substituting this into \eqref{eqn:step1}, we have
\begin{equation}
\label{eqn:step2}
0\ge (c_\psi^2+V_0) \|e^r ws\|^2-2C_\psi\|e^{r/2}ws\|\cdot \|e^{r/2}w's\|+B_w.
\end{equation}

Let $\chi\colon \bR \rightarrow [0,1]$ be a bump function with support contained in $(r_0-1,r_0+1)$, and set $\chi_k(r)=\chi(k^{-1}(r-r_0)+r_0)$.  As $k \rightarrow \infty$, $\chi_k \rightarrow 1$ pointwise, and is centred about $r_0$.  We will view $\chi_k$ as functions on $W$.  Let $w(r)=e^{mr}\chi_k(r)$.  Then equation \eqref{eqn:step2} becomes
\begin{equation}
\label{eqn:step3}
0\ge (c_\psi^2+V_0) \|e^{(m+1)r}\chi_k s\|^2-2C_\psi\|e^{(m+1/2)r}\chi_k s\|\cdot \|e^{(m+1/2)r}(m\chi_k+\chi_k')s\|+B_w.
\end{equation}
The boundary term $B_w$ does not depend on $k$, since $\chi_k$ equals $1$ on a neighborhood of $r_0$ for all $k$.  If $|e^{(m+1/2)r}s|$ is square-integrable, then the middle term has a limit as $k \rightarrow \infty$, so if this is the case, then the inequality \eqref{eqn:step3} implies
\[ \lim_{k\rightarrow \infty} \|e^{(m+1)r}\chi_ks\| \]
exists, hence $|e^{(m+1)r}s|$ is square-integrable as well.  By induction, $|e^{mr}s|$ is square-integrable for all $m \in \bR$. 

Turning to the first derivatives, first note that since $\c(\partial_r)\partial_r$, $\st{D}^N$ graded commute, Green's formula gives an identity
\begin{equation} 
\label{eqn:greengrcomm}
\|\st{D}^W\alpha \|^2=\|\partial_r\alpha\|^2+\|\st{D}^N\alpha\|^2+B_\alpha',
\end{equation}
for $\alpha$ smooth and compactly supported, where $B_\alpha'$ is a boundary term.  Let $w=w(r) \in C^\infty_c(\bR)$, and consider the equation:
\[ -we^r\psi s=w\st{D}^W s.\]
Taking the $L^2$ norm-squared of both sides, we find
\begin{align*}
C_\psi^2 \|we^rs\|^2 &\ge \|w\st{D}^Ws\|^2\\
&\ge \tfrac{1}{2}\|\st{D}^W(ws)\|^2-\|[\st{D}^W,w]s\|^2\\
&=\tfrac{1}{2}\|\partial_r(ws)\|^2+\tfrac{1}{2}\|\st{D}^N(ws)\|^2+B_{ws}'-\|[\st{D}^W,w]s\|^2\\
&\ge \tfrac{1}{4}\|w\partial_r s\|^2+\tfrac{1}{2}\|w\st{D}^Ns\|^2-\tfrac{1}{2}\|w's\|^2+B_{ws}'-\|[\st{D}^W,w]s\|^2,
\end{align*}
where we used $\|a_1+a_2\|^2\ge \tfrac{1}{2}\|a_1\|^2-\|a_2\|^2$ in the second and fourth lines, and \eqref{eqn:greengrcomm} in the third line.  If we take $w=\chi_k e^{mr}$ as before, then the first part of the proof implies all terms in this expression other than those containing $\partial_rs$, $\st{D}^Ns$ are uniformly bounded in $k$ as $k \rightarrow \infty$ (and $B_{ws}'$ does not depend on $k$ as before).  Consequently
\[ \|e^{mr}\st{D}^Ns\|=\lim_{k\rightarrow \infty}\|\chi_k e^{mr}\st{D}^Ns\|, \qquad \|e^{mr}\partial_r s\|=\lim_{k\rightarrow \infty} \|\chi_k e^{mr}\partial_r s\| \]
are both finite.
\end{proof}
\ignore{
. By the first part of the proof, $\|e^{mr}s\|<\infty$, hence
\[ \|w's\|\le \tfrac{\|\chi'\|_\infty}{k}\|e^{mr}s\|+m\|\chi_k e^{mr}s\| \]
and we deduce that $\|w's\|^2$ is bounded above by some constant $C_m$ not depending on $k$.  Equation \eqref{eqn:step3} becomes
\begin{equation}
\label{eqn:step5}
\|\st{D}^W(ws)\|^2\ge \|w\st{D}^Ns\|^2+\tfrac{1}{2}\|w\partial_r s\|^2-C_m.
\end{equation}
On the other hand, using 
\[ \st{D}^W(ws)=w'\c(\partial_r)s+w\st{D}^Ws=w'\c(\partial_r)s-we^r\psi s\]
we obtain an upper bound
\[ \|\st{D}^W(ws)\|^2\le 2\|w's\|^2+2C_\psi^2\|we^rs\|^2.\]
Combining this with \eqref{eqn:step4} we find
\[ 2\|w's\|^2+2C_\psi^2\|we^rs\|^2\ge \|w\st{D}^Ns\|^2+\tfrac{1}{2}\|w\partial_r s\|^2-C_m. \]
Recall $w=\chi_k e^{mr}$. The left hand side has a finite limit as $k \rightarrow \infty$, by the first part of the proof.  
\end{proof}
}
\begin{proposition}[cf. \cite{Braverman2002} Section 14.6, \cite{MaZhangBravermanIndex} Section 3]
\label{prop:bravindex}
Sections in the kernel of $P_1$ or $P_1^\ast$ vanish identically outside $M$.  For each $\lambda \in \Irr(T)$, there are isomorphisms
\[ \ker(P_1)_\lambda \simeq \ker(\scr{D}_1^{M,+})_\lambda,\qquad \coker(P_1)_\lambda=\ker(P_1^\ast)_\lambda\simeq \ker(\scr{D}_1^{M,-})_\lambda.\]
Therefore $P_1$ is $T$-Fredholm and
\[ \index(P_1)=\index(\scr{D}_1^{M}) \in R^{-\infty}(T).\]
\end{proposition}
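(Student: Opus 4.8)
The plan is to compute the kernels of $P_1$ and $P_1^\ast$ by hand and match them with those of $\scr D_1^{M,+}$ and $\scr D_1^{M,-}$; the homotopy/continuity arguments available for $t\in[0,1)$ cannot be used at $t=1$, since $\sigma_{P_1}$ is degenerate (it kills the $F$-summand everywhere and vanishes off $M$), so there is no short route via transversal ellipticity. The first step is a reduction: $R$ is an invertible $T$-equivariant pseudodifferential operator of order $-1$, hence restricts to a Hilbert-space isomorphism $L^2(DM,\V^+)_\lambda\xrightarrow{\ \sim\ }H^1(DM,\V^+)_\lambda$ for each $\lambda\in\Irr(T)$. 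Writing $Q:=\psi^+ +\rho\st D^{M,+}$ we have $P_1=QR$, so $\ker(P_1)_\lambda=R^{-1}\ker(Q)_\lambda$ and (once closed range is known) $\coker(P_1)_\lambda\cong\coker(Q\colon H^1_\lambda\to L^2_\lambda)$; thus everything reduces to the \emph{local} operator $Q$. Since $M=\{\rho>0\}$, on $M$ the principal symbol of $Q$ is $\rho\,\i\c(\xi)$, which is invertible for $\xi\ne 0$, so $Q$ is elliptic on $M$; off $M$ one has $\rho\equiv 0$ and $\psi^\pm=\id$ in the chosen trivialisation, so $Q=\id$ there. Consequently any element of $\ker Q$ (resp.\ of $\ker Q^\sharp$, $Q^\sharp$ the formal adjoint w.r.t.\ $dV_{g_{\sDM}}$) vanishes a.e.\ off $M$ and is smooth on $M$ by elliptic regularity.

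Next I would set up the kernel isomorphism, using the algebraic identity $\rho^{-1}Q=\st D^{M,+}+h\psi^+=\scr D_1^{M,+}$ on $M$ (and, dually, $Q^\sharp$ equals the identity off $M$ and equals $\rho\,\scr D_1^{M,-}$ on $M$ modulo a zeroth-order term supported on $\Cyl_N$, coming from $d\rho$ and from the Jacobian $dV_{g_{\sDM}}/dV_{g_M}=r^2$, which vanishes to infinite order at $\partial M=\{w=0\}$ and is harmless). Define
\[
\Phi\colon\ker(\scr D_1^{M,+})_\lambda\to\ker(P_1)_\lambda,\qquad \Phi(s)=R^{-1}\bigl(\tn{ext}_0 s\bigr),
\]
where $\tn{ext}_0$ is extension by zero to $DM$, and $\Psi\colon\ker(P_1)_\lambda\to\ker(\scr D_1^{M,+})_\lambda$, $\Psi(u)=(Ru)|_M$. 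The point of Lemma \ref{lem:SqrInt} is exactly that these maps are well defined. A solution $s$ of $\scr D_1^{M,+}s=0$ lies in $L^2$ (Proposition \ref{prop:DMFredholm}), hence $e^{-r}s\in L^2$ on the cylindrical end, so by Lemma \ref{lem:SqrInt} $e^{mr}s$, $e^{mr}\partial_r s$, $e^{mr}\st D^N s\in L^2$ for every $m$; in the coordinate $w=r^{-1}$ this says $s$ and its first derivatives vanish to infinite order at $w=0$, so $\tn{ext}_0 s\in H^1(DM)$ (in fact $C^\infty$), and $\rho\,\st D^{M,+}(\tn{ext}_0 s)=\tn{ext}_0(h^{-1}\st D^{M,+}s)=-\psi^+\tn{ext}_0 s$, giving $Q(\tn{ext}_0 s)=0$. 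Conversely, for $u\in\ker(P_1)_\lambda$ the section $Ru\in\ker(Q)_\lambda$ is supported in $\ol M$, smooth on $M$, and satisfies $\scr D_1^{M,+}\bigl((Ru)|_M\bigr)=0$ on $M$; applying Lemma \ref{lem:SqrInt} after noting that $e^{-2r}$ times the Jacobian $w^{-2}$ is bounded near $w=0$ (so $e^{-r}(Ru)|_M\in L^2(M,g_M)$), one gets $(Ru)|_M\in L^2(M,g_M)$, hence $(Ru)|_M\in\dom(\scr D_1^{M,+})$ and lies in its kernel. A direct check that $\Psi\Phi=\id$ and $\Phi\Psi=\id$ (the latter using $\supp(Ru)\subseteq\ol M$) yields the $T$-equivariant isomorphism $\ker(P_1)_\lambda\simeq\ker(\scr D_1^{M,+})_\lambda$, and the same argument applied to $P_1^\ast$ (whose symbol is again the identity off $M$, and which on $M$ is, up to harmless lower-order terms, $\rho\,\scr D_1^{M,-}$) gives $\ker(P_1^\ast)_\lambda\simeq\ker(\scr D_1^{M,-})_\lambda$.

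It remains to deduce Fredholmness and the index equality. Since $\scr D_1^M$ is $T$-Fredholm (Proposition \ref{prop:DMFredholm}), the right-hand spaces above are finite-dimensional, hence so are $\ker(P_1)_\lambda$ and $\ker(P_1^\ast)_\lambda$. To conclude that $P_1$ is $T$-Fredholm one still needs $\mathrm{ran}(P_1)_\lambda$ closed, equivalently that $Q\colon H^1(DM,\V^+)_\lambda\to L^2(DM,\V^-)_\lambda$ has closed range. For this I would construct a parametrix for $Q$ by patching the identity operator near $\partial M$ (where $Q=\id+\rho\,\st D^{M,+}$ with $\rho$ and all its $w$-derivatives vanishing to infinite order) with $\rho^{-1}$ times a Dirac parametrix on the region $\{\rho\ge\epsilon\}$, the error terms being controlled by that infinite-order vanishing; this follows the strategy of Braverman \cite{Braverman2002} and Ma--Zhang \cite{MaZhangBravermanIndex}. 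Granting closed range, $P_1$ is $T$-Fredholm, $\coker(P_1)_\lambda\cong\ker(P_1^\ast)_\lambda$, and for every $\lambda$
\[
\index(P_1)_\lambda=\dim\ker(P_1)_\lambda-\dim\ker(P_1^\ast)_\lambda=\dim\ker(\scr D_1^{M,+})_\lambda-\dim\ker(\scr D_1^{M,-})_\lambda=\index(\scr D_1^M)_\lambda,
\]
so $\index(P_1)=\index(\scr D_1^M)$ in $R^{-\infty}(T)$.

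The main obstacle is everything that happens across the gluing hypersurface $\{w=0\}$: showing that kernel elements of $P_1$ genuinely restrict to elements of $\dom(\scr D_1^{M,+})$ (which is where the infinite-order decay of Lemma \ref{lem:SqrInt}, together with bookkeeping of the Jacobian relating the two $L^2$-structures on $\Cyl_N$, is essential), and establishing closed range for the degenerate operator $Q$. This is precisely the point where a gap appeared in an earlier draft; once these two analytic facts are secured, the rest is routine. Combined with Proposition \ref{prop:TransEll} and homotopy invariance of the transversally elliptic index for $t\in[0,1)$, this identifies $\index(P_0)=\index(\scr D_1^M)=\index_{\tn{APS},\beta}(\scr D,v)$ and leads to the Paradan-type formula \eqref{eqn:NormSqrFormula}.
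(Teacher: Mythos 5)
Your identification of $\ker(P_1)_\lambda$ is essentially the paper's: factor $P_1=(\psi^++\rho\,\st{D}^{M,+})R$, note that kernel sections vanish off $M$ (where $\psi^+$ is invertible and $\rho\equiv 0$), and use Lemma \ref{lem:SqrInt} together with the Jacobian $dV_{g_M}=r^{2}\,dV_{g_{DM}}$ on $\Cyl_N$ to pass in both directions between $H^1(DM,\V^+)$ and $\dom(\scr{D}_1^{M,+})$. The genuine gap is in your treatment of $P_1^\ast$. You propose to view the adjoint on $M$ as ``$\rho\,\scr{D}_1^{M,-}$ up to harmless zeroth-order terms coming from $d\rho$ and the Jacobian'' and then run ``the same argument''; but an exact kernel computation is not stable under discarding nonzero lower-order terms, no matter how fast they decay. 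From the exact identity $P_1^\ast=R\big(\psi^-+\st{D}^{M,-}\circ\rho\big)$ one finds, for $s\in\ker(P_1^\ast)_\lambda$, that $\scr{D}_1^{M,-}(\rho\,s|_M)=0$, \emph{not} $\scr{D}_1^{M,-}(s|_M)=0$: indeed $\scr{D}_1^{M,-}(s|_M)=-h\,\c(d\rho)\,s|_M$, which on the cylindrical end equals $\c(dr)\,s|_M$ and has no reason to vanish. So the cokernel isomorphism is $s\mapsto\rho\,s|_M$, with inverse sending $\tilde{s}\in\ker(\scr{D}_1^{M,-})_\lambda$ to the extension of $h\tilde{s}$ by zero (Lemma \ref{lem:SqrInt} ensuring this lies in $H^1(DM,\V^-)$). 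Your sketch never specifies the map, and the reading suggested by ``the same argument'' (restriction, as for $\ker P_1$) fails; this $\rho$-twist is exactly how the paper argues.

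Your added closed-range step should also be reconsidered. You are right that finite-dimensional kernel and adjoint kernel do not by themselves give Fredholmness in the textbook sense, but the statement you propose to prove is not available: writing $Q=\psi^++\rho\,\st{D}^{M,+}$, one has $Q(e\oplus f)=(\psi^+e+\rho\,\st{D}^{M,+}e)\oplus f$ for $e\oplus f\in H^1(DM,\E^+\oplus F)$, so $\ran(P_1)=Q(H^1)$ contains the $F$-block $H^1(DM,F)$, a dense non-closed subspace of $L^2(DM,F)$; hence $\ran(P_1)_\lambda$ is not closed whenever $F\neq0$, and no parametrix patching can change this, since $Q$ has order zero on the $F$-summand and on all of $\V$ outside $M$. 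The paper does not attempt such an estimate: in the proposition $\coker(P_1)_\lambda$ is taken to be $\ker(P_1^\ast)_\lambda$, and the entire content of the proof is the two kernel identifications, from which $\index(P_1)=\index(\scr{D}_1^{M})$ is read off isotypic-wise as the difference of the two finite dimensions; that is the form used in the sequel. So the two repairs needed are: replace the ``harmless terms'' shortcut for $P_1^\ast$ by the exact $\rho$-twisted identification above, and drop (or reformulate in this weak sense) the closed-range/parametrix step.
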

\begin{proof}
For $t=1$
\[ P_1=(\psi^+ +\rho \st{D}^{M,+})R\]
hence $R$ induces an isomorphism between $\ker(P_1)$ and $\ker(\psi^++\rho \st{D}^{M,+})$, where $\psi^++\rho \st{D}^{M,+}$ has domain $H^1(DM,\V^+)$.  
Let $s \in \ker(\psi^++\rho \st{D}^{M,+})_\lambda$, then $s$ must vanish outside $M$, since outside $M$, $\rho=0$ while $\psi^+$ is invertible.  On $M$ we have
\begin{equation} 
\label{eqn:psiDM}
\psi^++\rho \st{D}^{M,+}=\rho \scr{D}^{M,+}_1\oplus \id_F,
\end{equation}
hence the $F$-component of $s$ vanishes and
\[ \rho \scr{D}^{M,+}_1 s=0.\]
Since $\rho>0$ on $M$, this implies 
\[ \scr{D}^{M,+}_1 s=0, \]
that is, $s$ lies in the kernel of the differential operator $\scr{D}^{M,+}_1$.  By elliptic regularity $s$ is smooth.  Moreover $s$ is $g_{DM}$-square integrable, hence by \eqref{eqn:DMmetricw}, $r^{-1}s|_{\Cyl_N}$ is $g_M|_{\Cyl_N}$-square integrable.  Applying Lemma \ref{lem:SqrInt}, $s$ is $g_M$-square integrable, hence lies in $\dom(\scr{D}^{M,+}_1)$.  This shows that there is an inclusion $\ker(\psi^++\rho\st{D}^{M,+})_\lambda \hookrightarrow \ker(\scr{D}^{M,+}_1)_\lambda$. 

Conversely suppose $s \in \ker(\scr{D}^{M,+}_1)_\lambda$.  Let $\tilde{s}$ be the extension of $s$ by $0$ to $DM$.  Lemma \ref{lem:SqrInt} shows that $\tilde{s}$ lies in the Sobolev space $H^1(DM,\V^+)=\dom(\psi^++\rho\st{D}^{M,+})$, and hence in $\ker(\psi^++\rho\st{D}^{M,+})$ by \eqref{eqn:psiDM}.  Combining this with the previous paragraph, we have identified $\ker(\psi^++\rho\st{D}^{M,+})_\lambda$ with $\ker(\scr{D}^{M,+}_1)_\lambda$.

The adjoint is
\[ P_1^\ast=R(\psi^-+\st{D}^{M,-}\circ \rho),\]
where the operator $\st{D}^{M,-}\circ \rho$ is the adjoint of $\rho\st{D}^{M,+}$. Let $s \in \ker(P_1^\ast)_\lambda$.  Since $R$ is invertible, we must have
\begin{equation} 
\label{eqn:dist}
(\psi^-+\st{D}^{M,-}\circ \rho)s=0
\end{equation}
in the sense of distributions. Similar to above $s$ must vanish outside $M$, its $F$-component vanishes, and
\[ \scr{D}^{M,-}_1(\rho s)=0.\]
By elliptic regularity, $\rho s|_M$ is smooth.  Since $s$ is $g_{DM}$-square-integrable, $\rho s|_M$ is $g$-square integrable.  Thus $\rho s|_M \in \ker(\scr{D}^{M,-}_1)_\lambda$.

Conversely suppose $\tilde{s} \in \ker(\scr{D}^{M,-}_1)_\lambda$.  Extending $h\tilde{s}$ by $0$, we obtain a section $s$ over $DM$.  By Lemma \ref{lem:SqrInt}, $s \in H^1(DM,\V^-)$.  Since $\rho|_M=h^{-1}$,
\[ \scr{D}^{M,-}_1(\rho h \tilde{s})=\scr{D}^{M,-}_1\tilde{s}=0 \]
and this implies \eqref{eqn:dist} holds in the sense of distributions, hence $s \in \ker(P_1^\ast)_\lambda$.  This proves the map $s \mapsto \rho s|_{M}$ identifies $\ker(P_1^\ast)_\lambda$ with $\ker(\scr{D}^{M,-}_1)_\lambda$.
\end{proof}
\begin{corollary}
The $T$-index $\index(P_t) \in R^{-\infty}(T)$ is independent of $t \in [0,1]$.
\end{corollary}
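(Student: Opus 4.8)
The plan is to deduce this from the two propositions just proved, by a soft continuity-of-the-index argument carried out isotypical component by isotypical component. First I would note that, by \eqref{eqn:DefPt}, the family $t \mapsto P_t$ depends affinely on $t$: the three operators $\psi^+$, $\psi^+ R$ and $\rho\,\st{D}^{M,+}R$ are fixed bounded operators $L^2(DM,\V^+) \to L^2(DM,\V^-)$, so $t \mapsto P_t$ is a norm-continuous (indeed affine) family of bounded operators. Second, I would record that every member of this family is $T$-Fredholm: for $t \in [0,1)$ this is because $P_t$ is transversally elliptic on the compact $T$-manifold $DM$ (Proposition \ref{prop:TransEll}), so its $T$-index is defined in $R^{-\infty}(T)$ in the sense of Atiyah \cite{AtiyahTransEll}, meaning precisely that each $(P_t)_\lambda$ is Fredholm; and for $t=1$ this is Proposition \ref{prop:bravindex}.

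With both facts in hand, fix $\lambda \in \Irr(T)$. Since $P_t$ is $T$-equivariant it commutes with the orthogonal projection onto the $\lambda$-isotypical subspace, so restriction gives a norm-continuous family $t \mapsto (P_t)_\lambda$ of bounded operators $L^2(DM,\V^+)_\lambda \to L^2(DM,\V^-)_\lambda$, each of which is Fredholm by the previous paragraph. Because the integer-valued index is locally constant on the space of Fredholm operators between two fixed Hilbert spaces in the norm topology, and $[0,1]$ is connected, $\index\big((P_t)_\lambda\big)$ is independent of $t$. Summing over $\lambda$ yields that $\index(P_t) = \sum_{\lambda \in \Irr(T)} \index\big((P_t)_\lambda\big)\,\lambda \in R^{-\infty}(T)$ is independent of $t \in [0,1]$, which is the claim.

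The one point that requires care — and, I expect, the only real ``obstacle'' in an argument that is otherwise routine — is that, as noted in the remark after Proposition \ref{prop:CtsTFredholm}, the set of bounded $T$-Fredholm operators is \emph{not} open in the norm topology, so norm-continuity of $t \mapsto P_t$ by itself does not give constancy of the $T$-index. What rescues the argument is that Propositions \ref{prop:TransEll} and \ref{prop:bravindex} already establish $T$-Fredholmness at \emph{every} $t \in [0,1]$, including the degenerate endpoint $t=1$ where $P_1$ fails to be transversally elliptic (its symbol vanishes outside $M$); once that is known, the standard non-equivariant argument applies on each fixed $H_\lambda$. As an alternative one could invoke the homotopy invariance of the transversally elliptic index of \cite{AtiyahTransEll} on the subinterval $[0,1)$, but a separate argument at $t=1$ would still be needed, and that is exactly what Proposition \ref{prop:bravindex} together with the isotypical continuity above supplies.
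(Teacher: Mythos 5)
Your argument is correct and is essentially the paper's proof: the paper also fixes $\lambda\in\Irr(T)$, observes that $t\mapsto (P_t)_\lambda$ is a norm-continuous family of Fredholm operators (Fredholmness coming from transversal ellipticity for $t\in[0,1)$ and from Proposition \ref{prop:bravindex} at $t=1$), and concludes by local constancy of the index on each isotypical component. Your additional remarks about affine dependence on $t$ and the non-openness of the set of $T$-Fredholm operators correctly identify why the endpoint $t=1$ needs the separate Fredholmness statement, but they do not change the route.
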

\begin{proof}
For each $\lambda \in \Lambda^\ast$, the restriction of $P_t$ to the $\lambda$-isotypical component is a norm-continuous family of Fredholm operators, hence the index is constant.
\end{proof}

\subsection{Abelian localization and the Paradan-type formula.}
For a compact $G$-manifold $X$, the symbol $\sigma(P)$ of a transversally elliptic operator $P \colon C^\infty(X,E) \rightarrow C^\infty(X,F)$ defines a class in $\K^0_G(T_G^\ast X)$; this class is given in terms of the `difference bundle construction' cf. \cite{AtiyahKtheory}, and only depends on the bundles $E$, $F$ and the behavior of $\sigma(P)$ away from the $0$-section.  If $X$ is compact, then the index map
\begin{equation} 
\label{eqn:IndexMapSymbols}
\index \colon \K^0_G(T_G^\ast X)\rightarrow R^{-\infty}(G) 
\end{equation}
is defined by realizing elements of $\K^0_G(T_G^\ast X)$ as symbols of transversally elliptic operators on $X$ via the difference bundle construction, followed by taking the analytic index, see \cite{AtiyahTransEll, ParVerTransEll}.  If $X$ is non-compact, then the index map \eqref{eqn:IndexMapSymbols} is defined by first embedding $X$ into a compact $G$-manifold $X^\prime$, and choosing suitable representatives of K-theory classes which can be extended by the identity outside $X$.  We did exactly this for $X=M$ and the bundle morphism $\psi^\prime$ (and hence also the symbol $\sigma_{P_0}|_M$ in equation \eqref{eqn:SymbolPt}) near the beginning of Section \ref{sec:DefTransEll}, with the compact manifold $X^\prime$ being the double $DM$.

Let $\mathring{U}=\mathring{U}_\beta$ denote the interior of $U=U_\beta$.  The restriction of the symbol of $P_0$ to $\mathring{U}$ is a symbol $\sigma_0$ given by
\[ \sigma_0(\xi)=\big(2f\theta\wh{\otimes}1+\i\wh{\otimes} \c(\langle\xi \rangle^{-1}\xi-v_{\sY})\big)\oplus \id_F,\]
where the right hand side is viewed as a bundle map $\E^+\oplus F \rightarrow \E^-\oplus F$.  This symbol defines a class in $\K_T^0(T_T^\ast \mathring{U})$, and by the above discussion, its index is $\index(P_0)$.  Since the index depends only on the class in $\K_T^0(T_T^\ast \mathring{U})$, we may drop the $\id_F$ component (this represents the trivial element in K-theory), and we may use a (straight-line) homotopy to eliminate the factors $2f$, $\langle\xi \rangle$.  This leads to the following.

\begin{proposition}
\label{prop:TransEllSymb}
The symbol
\[ \ti{\sigma}_{\beta,\theta}(\xi)=\theta\wh{\otimes}1+\i \wh{\otimes}\c (\xi-v_{\sY})\]
on $\mathring{U}=\mathring{U}_\beta$ is $T$-transversally elliptic and
\[ \index(\ti{\sigma}_{\beta,\theta})=\index(P_0)=\index_{\tn{APS},\beta}(\scr{D},v). \]
\end{proposition}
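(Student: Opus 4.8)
The plan is to establish the two asserted equalities separately and to carry them out by tracking K-theory classes through the constructions of Section~\ref{sec:DefTransEll}. The second equality, $\index(P_0)=\index_{\tn{APS},\beta}(\scr{D},v)$, is essentially already in hand: by the Corollary at the end of Section~\ref{sec:DefTransEll} the $T$-index $\index(P_t)$ is independent of $t\in[0,1]$, so $\index(P_0)=\index(P_1)$; by Proposition~\ref{prop:bravindex}, $\index(P_1)=\index(\scr{D}_1^M)$; and by the Corollary preceding Section~\ref{sec:DefTransEll}, $\index(\scr{D}_t^M)=\index_{\tn{APS},\beta}(\scr{D},v)$ for all $t\ge 1$. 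Chaining these gives the claim, so the only real work is the first equality, $\index(\ti\sigma_{\beta,\theta})=\index(P_0)$.

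For that, I would argue as follows. First, observe that $P_0=\psi^++\rho\st{D}^{M,+}R$ is transversally elliptic on $DM$ by Proposition~\ref{prop:TransEll} (case $t=0$), so its symbol defines a class in $\K^0_T(T^\ast_T DM)$ with a well-defined index via \eqref{eqn:IndexMapSymbols}. The symbol $\sigma_{P_0}(\xi)$ is given by \eqref{eqn:SymbolPt} at $t=0$: outside $M$ it equals the invertible bundle map $\psi^+$, so the symbol class is supported in (the closure of) $M$. Hence by the excision property of the index for transversally elliptic symbols---i.e. the fact that the index map for a non-compact $T$-manifold is computed by extending by the identity to any compactification, and is independent of the choice---the class $[\sigma_{P_0}]$ and its index coincide with the image under $\K^0_T(T^\ast_T\mathring U)\to\K^0_T(T^\ast_T DM)$ of the restricted symbol $\sigma_0$ on $\mathring U$, which is exactly the content asserted in the paragraph before Proposition~\ref{prop:TransEllSymb}: $\index(\sigma_0)=\index(P_0)$. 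Here I would need to check that $\sigma_0$ is indeed invertible off the $0$-section away from a $T_T^\ast\mathring U$-compact set---this follows by the same product-symbol argument as in the proof of Proposition~\ref{prop:TransEll}, using that $v_{\sY}$ is tangent to the $T$-orbits and that $f\theta$ is invertible away from the compact locus where $\theta^2<1$.

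Next I would connect $\sigma_0$ to $\ti\sigma_{\beta,\theta}$ by an explicit homotopy of transversally elliptic symbols on $\mathring U$, staying within the class of symbols whose characteristic set meets $T^\ast_T\mathring U$ in a compact set (so the index is constant along the homotopy, by homotopy invariance of the index map \eqref{eqn:IndexMapSymbols}). Three moves are needed: (i) drop the $\oplus\,\id_F$ summand, since $F$ contributes the trivial K-theory class and hence $0$ to the index; (ii) rescale $\langle\xi\rangle^{-1}\xi$ to $\xi$ by the straight-line homotopy $\xi\mapsto(1-u)\langle\xi\rangle^{-1}\xi+u\xi$, $u\in[0,1]$, noting $(1-u)\langle\xi\rangle^{-1}+u>0$ so the Clifford-argument vector never acquires a spurious zero and the transversal characteristic set is unchanged; (iii) rescale the coefficient $2f$ of $\theta$ to $1$ by the homotopy $s\mapsto\big((1-s)\cdot2f+s\big)\theta$, $s\in[0,1]$, with the coefficient staying $\ge 1>0$ throughout, so invertibility on $T^\ast_T\mathring U$ away from a compact set is preserved at every stage (the argument is again the product-symbol one: the transversal characteristic set is always contained in the vanishing locus of $v_{\sY}$ intersected with the compact set where $\theta^2$ degenerates). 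After these homotopies the symbol is precisely $\theta\wh\otimes 1+\i\wh\otimes\c(\xi-v_{\sY})=\ti\sigma_{\beta,\theta}(\xi)$, giving $\index(\ti\sigma_{\beta,\theta})=\index(\sigma_0)=\index(P_0)$, and combined with the first paragraph this completes the proof.

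The main obstacle I anticipate is purely bookkeeping rather than conceptual: making the excision/compactification step of paragraph~2 precise. One must be careful that the symbol $\sigma_{P_0}$ (a $0$-th order symbol on $DM$, not literally the identity but rapidly approaching $\psi^+$) genuinely represents, in $\K^0_T(T^\ast_T DM)$, the image of a class supported in $\mathring U$; this is where the specific construction at the start of Section~\ref{sec:DefTransEll}---trivializing $\V^\pm$ over $\Cyl_N$ using invertibility of $\psi$, and extending $\psi$ and $\V$ by the identity to $M'$---is used, and one should cite it explicitly. Everything else (the homotopies, homotopy invariance and the triviality of adding $F$) is standard for transversally elliptic symbols, so the proof should be short once the excision step is stated carefully.
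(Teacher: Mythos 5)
Your proposal is correct and follows essentially the same route as the paper: the second equality is exactly the paper's chain $\index(P_0)=\index(P_1)=\index(\scr{D}_1^M)=\index_{\tn{APS},\beta}(\scr{D},v)$ via the corollaries and Proposition \ref{prop:bravindex}, and the first equality is obtained, as in the paper, by noting that $\sigma_{P_0}$ is the extension by the identity (via the trivialization of $\V^{\pm}$ over $\Cyl_N$ and $M'$) of its restriction $\sigma_0$ to $\mathring{U}$, then dropping the trivial $\id_F$ summand and using straight-line homotopies to remove the factors $2f$ and $\langle\xi\rangle^{-1}$.
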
  
\ignore{
Moreover $\ti{\sigma}_{\beta,\theta}$ is a product of the symbol 
\[ \ti{\sigma}_\beta=\i\c\big(\xi-v_{\sY}\big)\] 
acting on $p^\ast S$ (where $p \colon T^\ast U \rightarrow U$), with the bundle endomorphism $\theta$.
This becomes clearer if we restore the graded tensor product notation that we have been mostly suppressing:
\begin{equation} 
\label{eqn:ProdSymb}
\ti{\sigma}_{\beta,\theta}(\xi)=\theta \wh{\otimes}1+\i\wh{\otimes}\c(\xi-v_{\sY}).
\end{equation}

if we present the morphism $\ti{\sigma}_{\beta,\theta}^+\colon \pi^\ast\E^+ \rightarrow \pi^\ast\E^-$ in terms of the direct sum decompositions
\[ \E^+=(E^+\otimes S^+)\oplus(E^-\otimes S^-), \qquad \E^-=(E^-\otimes S^+)\oplus(E^+\otimes S^-),\]
as the matrix
\[ \ti{\sigma}_{\beta,\theta}^+=\left(\begin{array}{cc} \theta^+ \otimes 1 & -1\otimes \ti{\sigma}_\beta^- \\ 1\otimes \ti{\sigma}_\beta^+ & \theta^-\otimes 1 \end{array}\right).\]

To reflect this we introduce the notation $\theta \otimes \ti{\sigma}_\beta$ for the symbol $\ti{\sigma}_{\beta,\theta}$:
\begin{equation}
\label{def:ProdSymb}
\ti{\sigma}_{\beta,\theta}=\theta \otimes \ti{\sigma}_\beta.
\end{equation}
}
For $\beta \ne 0$, let $\nu_\beta=\nu(\mathring{U},\mathring{U}^\beta)$ be the normal bundle to the fixed-point set $\mathring{U}^\beta$.  It inherits a metric by identifying $\nu_\beta$ with the Riemannian orthogonal complement of $T\mathring{U}^\beta$.  Let $T_\beta \subset T$ denote the subtorus obtained by taking the closure of $\exp_T(\bR\beta)$.  Then $T_\beta$ fixes $\mathring{U}^\beta$, so acts fibre-wise on the normal bundle $\nu_\beta$.  We may choose a complex structure on $\nu_\beta$ such that the complex $T_\beta$-weights are $\beta$-\emph{polarized}, i.e. for each complex weight $\alpha$ of the $T_\beta$ action on $\nu_\beta$ one has $\pair{\alpha}{\beta}>0$.  This condition determines the complex structure on $\nu_\beta$ up to homotopy.  Let $\wedge \nu_\beta$ (resp. $\Sym(\nu_\beta)$) denote the complex exterior algebra (resp. complex symmetric algebra) bundle.  Let $\ol{\nu}_\beta$ denote $\nu_\beta$ equipped with the opposite complex structure; one has likewise $\wedge \ol{\nu}_\beta$ and $\Sym(\ol{\nu}_\beta)$.

The exterior algebra $\wedge \ol{\nu}_\beta$ is a spinor module for the Euclidean vector bundle $\nu_\beta \rightarrow \mathring{U}^{\beta}$.  One has a short exact sequence
\[ 0 \rightarrow T\mathring{U}^{\beta} \rightarrow T\mathring{U} \restriction \mathring{U}^{\beta} \rightarrow \nu_\beta \rightarrow 0.\]
By the 2-out-of-3 property for spin-c structures, the spinor modules $S$ for $T\mathring{U}$ and $\wedge \ol{\nu}_\beta$ for $\nu_\beta$ determine a $\bZ_2$-graded spinor module $S_\beta$ for $T\mathring{U}^{\beta}$ such that
\begin{equation}
\label{eqn:InducedSpinc}
S_\beta \otimes \wedge \ol{\nu}_\beta\simeq S \restriction \mathring{U}^{\beta}.
\end{equation}
For the corresponding determinant line bundles, equation \eqref{eqn:InducedSpinc} implies
\begin{equation}
\label{eqn:InducedDetLine}
\det(S_\beta)=\det(S)\otimes \det(\nu_\beta).
\end{equation}

There is a symbol $\sigma_{\beta,\theta}$ on $\mathring{U}_\beta^{T_\beta}$ defined in a similar manner to $\ti{\sigma}_{\beta,\theta}$:
\begin{equation} 
\label{eqn:ProdSymb}
\sigma_{\beta,\theta}(\xi)=\theta \wh{\otimes}1+\i\wh{\otimes}\c_\beta(\xi-v_{\sY}),
\end{equation}
where $\c_\beta$ denotes Clifford multiplication for the spinor module $S_\beta$, and the right hand side of \eqref{eqn:ProdSymb} is viewed as a bundle map $(E\wh{\otimes}S_\beta)^+ \rightarrow (E\wh{\otimes}S_\beta)^-$.  The symbol $\sigma_{\beta,\theta}$ defines a class in $\K^0_T(T^\ast_T\mathring{U}^\beta)$, so has an index.

The next proposition follows from an abelian localization theorem for transversally elliptic symbols due to Paradan \cite[Theorem 5.8, Proposition 6.4]{ParadanRiemannRoch} (see also \cite{WittenNonAbelian}), building on results of Atiyah \cite{AtiyahTransEll} and Berline-Vergne.
\begin{proposition} $\index(\ti{\sigma}_{\beta,\theta})=\index(\sigma_{\beta,\theta} \otimes \Sym(\nu_\beta))$.
\end{proposition}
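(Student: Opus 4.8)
The plan is to obtain this from Paradan's abelian localization theorem for transversally elliptic symbols \cite[Theorem 5.8, Proposition 6.4]{ParadanRiemannRoch}, applied with respect to the subtorus $T_\beta=\ol{\exp_T(\bR\beta)}\subset T$ and the symbol $\ti\sigma_{\beta,\theta}$ on $\mathring U=\mathring U_\beta$. First I would assemble the data needed to invoke that theorem. Since $\mathring U_\beta$ is a relatively compact neighbourhood of $Z_\beta\cap\X$ with $\phi(\mathring U_\beta)$ contained in a small ball $B_\beta$ about $\beta$, the only component of the vanishing locus $Z$ meeting $\mathring U_\beta$ is $Z_\beta$, and $Z_\beta\subseteq\mathring U_\beta^\beta=\mathring U_\beta^{T_\beta}$. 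Because the taming map $v$ is $T$-invariant, the vector field $v_{\sY}$ is $T$-invariant and hence restricts to a vector field tangent to $\mathring U_\beta^{T_\beta}$, so the symbol $\sigma_{\beta,\theta}$ of \eqref{eqn:ProdSymb} is well-defined, with characteristic set in $T^\ast_T\mathring U_\beta^{T_\beta}$ lying over the compact set $Z_\beta\cap\X\cap\mathring U_\beta^{T_\beta}$; thus $\sigma_{\beta,\theta}\otimes\Sym(\nu_\beta)$ has a well-defined index in $R^{-\infty}(T)$. To work within the compact framework needed for Paradan's theorem I would, as in Section \ref{sec:DefTransEll}, pass to the compact double $DM$, into which $\ti\sigma_{\beta,\theta}$ (and likewise $\sigma_{\beta,\theta}\otimes\Sym(\nu_\beta)$) extends by the identity away from a neighbourhood of $\mathring U_\beta$, without changing the index.

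Next I would make the product structure explicit near $\mathring U_\beta^{T_\beta}$. The tubular neighbourhood theorem identifies a $T$-invariant neighbourhood of $\mathring U_\beta^{T_\beta}$ with the total space of $\nu_\beta$, and the splitting \eqref{eqn:InducedSpinc} of the spinor module, $S\restriction\mathring U_\beta^{T_\beta}\simeq S_\beta\otimes\wedge\ol\nu_\beta$, decomposes Clifford multiplication $\c$ for $S$ into $\c_\beta$ for $S_\beta$ (the tangent directions to $\mathring U_\beta^{T_\beta}$) together with the Koszul Clifford action on $\wedge\ol\nu_\beta$ (the normal directions). Under this decomposition the restriction of $\ti\sigma_{\beta,\theta}$ becomes the exterior product of the symbol $\sigma_{\beta,\theta}$ on $\mathring U_\beta^{T_\beta}$ with the Bott--Thom symbol of the $\beta$-polarized complex bundle $\nu_\beta$; this is precisely the situation to which Paradan's localization/multiplicativity statement (Proposition 6.4, resting on Atiyah's multiplicativity axiom \cite{AtiyahTransEll}) applies, yielding $\index(\ti\sigma_{\beta,\theta})=\index\big(\sigma_{\beta,\theta}\odot\Thom(\nu_\beta)\big)$.

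Finally I would evaluate the normal contribution. Since $\nu_\beta$ carries a $\beta$-polarized complex structure, every $T_\beta$-weight of $\nu_\beta$ pairs strictly positively with $\beta$; hence the fibrewise $T$-equivariant index of the Bott--Thom symbol of $\nu_\beta$ is the formal series $\Sym(\nu_\beta)$, which converges in $R^{-\infty}(T)$ because only finitely many weights contribute to each isotypical component (the standard fact that the Koszul complex of a complex $T$-module with positive weights has symmetric-algebra index; cf. Atiyah \cite{AtiyahTransEll} and Berline--Vergne). Feeding this into the multiplicativity statement gives $\index(\ti\sigma_{\beta,\theta})=\index(\sigma_{\beta,\theta}\otimes\Sym(\nu_\beta))$.

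The step I expect to be the main obstacle is not conceptual but bookkeeping: checking that the extra zeroth-order term $\theta$ does not take us outside the class of symbols covered by \cite[Theorem 5.8, Proposition 6.4]{ParadanRiemannRoch} once one has excised to a neighbourhood of $\mathring U_\beta^{T_\beta}$, and pinning down the orientation and complex-structure conventions so that the normal contribution comes out as $\Sym(\nu_\beta)$ itself (rather than a dual or an alternating sum). This last point is exactly what the $\beta$-polarization of $\nu_\beta$ is engineered to control, and it has to be matched carefully with the Clifford-action conventions used to define $\sigma_{\beta,\theta}$ through \eqref{eqn:InducedSpinc}.
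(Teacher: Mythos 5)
Your proposal is correct and takes essentially the same route as the paper, which deduces this proposition directly from Paradan's abelian localization theorem \cite[Theorem 5.8, Proposition 6.4]{ParadanRiemannRoch} (building on Atiyah \cite{AtiyahTransEll}); your excision to a tubular neighborhood of $\mathring{U}^{T_\beta}$, the splitting $S\restriction \mathring{U}^{\beta}\simeq S_\beta\otimes\wedge\ol{\nu}_\beta$ from \eqref{eqn:InducedSpinc}, and the multiplicativity argument are exactly what that citation packages. The only caveat is terminological: the fibre factor whose transversal index is $\Sym(\nu_\beta)$ is the $\beta$-deformed (pushed) Thom symbol $\c(\xi-v_{\sY})$ along the fibres of $\nu_\beta$ (transversally elliptic, not elliptic), rather than the ordinary Bott--Thom symbol whose index is trivial, and this deformed symbol is what your appeal to Atiyah's weight computation in fact uses.
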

Here $\index(\sigma \otimes \Sym(\nu))$ is defined as the sum over $k\ge 0$ of $\index(\sigma \otimes \Sym^k(\nu))$, the index of the transversally elliptic symbol $\sigma$ twisted by the finite dimensional vector bundle $\Sym^k(\nu)$ (the $k^{th}$ symmetric power).  As a corollary, we obtain a Paradan-type (\cite{ParadanRiemannRoch,WittenNonAbelian}) `norm-square localization' formula for $\index(\scr{D})$.
\begin{theorem}
\label{thm:NormSqrLoc}
We have the following equality in $R^{-\infty}(T)$
\begin{equation} 
\label{eqn:NormSqrLoc}
\index(\scr{D})=\sum_\beta \index_{\tn{APS},\beta}(\scr{D},v)=\sum_{\beta} \index(\sigma_{\beta,\theta} \otimes \Sym(\nu_\beta)).
\end{equation}
The sum is over $\beta \in \t$ labelling components $Z_\beta$ of the vanishing locus $Z$; in other words, the sum is over $\beta \in \t$ such that $Z_\beta=\Y^\beta \cap \phi^{-1}(\beta) \ne \emptyset$.  This is an infinite discrete subset of $\t$.
\end{theorem}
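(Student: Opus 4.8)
The plan is to assemble the theorem from three ingredients already established: the `limit of APS index' formula \eqref{eqn:LimitOfAPS}, Proposition \ref{prop:TransEllSymb}, and the abelian localization statement $\index(\ti\sigma_{\beta,\theta})=\index(\sigma_{\beta,\theta}\otimes\Sym(\nu_\beta))$ proved just above via Paradan's theorem.

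First I would recall that the first equality $\index(\scr{D})=\sum_\beta \index_{\tn{APS},\beta}(\scr{D},v)$ is exactly \eqref{eqn:LimitOfAPS}. This came from the splitting identity \eqref{eqn:SplitD}, Corollary \ref{cor:tDepend} (which makes the $t\to\infty$ limits defining the individual $\index_{\tn{APS},\beta}(\scr{D},v)$ meaningful), and Theorem \ref{prop:RDepend} (which shows the $W_R$-contribution vanishes as $R\to\infty$). As observed after \eqref{eqn:LimitOfAPS}, the resulting series is \emph{locally finite}: for each fixed $\lambda\in\Irr(T)$ only finitely many $\beta$ contribute to the $\lambda$-coefficient, so the sum converges in $R^{-\infty}(T)$.

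For the second equality, I would work with a fixed $\beta$ and chain the identities $\index_{\tn{APS},\beta}(\scr{D},v)=\index(\ti\sigma_{\beta,\theta})$ from Proposition \ref{prop:TransEllSymb} and $\index(\ti\sigma_{\beta,\theta})=\index(\sigma_{\beta,\theta}\otimes\Sym(\nu_\beta))$ from the abelian localization proposition, obtaining $\index_{\tn{APS},\beta}(\scr{D},v)=\index(\sigma_{\beta,\theta}\otimes\Sym(\nu_\beta))$ for each $\beta$. Substituting this term by term into the locally finite sum of the previous paragraph (legitimate precisely because convergence in $R^{-\infty}(T)$ is checked coefficient-by-coefficient, and each coefficient is a \emph{finite} sum) gives $\index(\scr{D})=\sum_\beta \index(\sigma_{\beta,\theta}\otimes\Sym(\nu_\beta))$. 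The claim that the summation set $\{\beta\in\t\,:\,Z_\beta=\Y^\beta\cap\phi^{-1}(\beta)\neq\emptyset\}$ is a discrete subset of $\t$ (contained in $W\cdot\B$ with $\B\subset\t_+$ discrete) was recorded in Section \ref{sec:WittenDef} right after the definition of the taming map, so I would just cite it.

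I do not expect a genuine obstacle here: the theorem is a bookkeeping corollary of the work done in Sections \ref{sec:MaZhangType} and \ref{sec:Paradan}, and the only subtlety --- interchanging the infinite sum over $\beta$ with the per-component substitution --- dissolves once local finiteness is in hand. The real difficulties all lie upstream: the Fredholmness and continuity of the deformation $\scr{D}_t$ (Propositions \ref{prop:CtsTFredholm}, \ref{prop:CtsTFredholm2}), the independence of the excised region $W_R$ in the limit (Theorem \ref{prop:RDepend}), the Braverman-type extension $\scr{D}_t^M$ with its decay Lemma \ref{lem:SqrInt}, and the reduction to a transversally elliptic symbol --- all of which have already been carried out.
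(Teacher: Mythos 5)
Your proposal is correct and matches the paper's (implicit) argument exactly: the theorem is stated there as an immediate corollary of the limit-of-APS formula \eqref{eqn:LimitOfAPS}, Proposition \ref{prop:TransEllSymb}, and the abelian localization proposition, assembled term by term using the local finiteness of the sum, just as you describe.
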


\subsection{Remarks on the $[Q,R]=0$ theorem for loop group spaces.}\label{ssec:QRrem}
In this section we briefly comment on the relation between Theorem \ref{thm:NormSqrLoc} and the $[Q,R]=0$ theorem for Hamiltonian loop group spaces.  The relationship between Paradan-type formulas (as in \eqref{eqn:NormSqrLoc}) and $[Q,R]=0$ theorems goes back to the work of Paradan \cite{ParadanRiemannRoch} (see also Paradan and Vergne \cite{WittenNonAbelian}).

Throughout this section we assume $G$ is simple and simply connected, and that the inner product on $\g$ is the \emph{basic inner product}, the unique invariant inner product normalized such that the squared lengths of the short co-roots is $2$.  The possible $U(1)$ central extensions of $LG$ are classified by an integer known as the \emph{level}.  Let $\wh{LG}$ denote the level $1$ central extension, sometimes called the \emph{basic} central extension.  By restriction we obtain a central extension $N_G(T)\ltimes \wh{\Lambda}$.  It satisfies \eqref{eqn:CommutationRelation} with the homomorphism $\kappa$ being the musical isomorphism induced by the basic inner product.

Let $\Phi_{\M} \colon \M \rightarrow L\g^\ast$ be a proper Hamiltonian $LG$-space.  A vector bundle $E \rightarrow \M$ is said to be at level $k \in \bZ$ if $E$ is $\wh{LG}$-equivariant and the central circle acts with weight $k$.  A level $k>0$ prequantum line bundle $L \rightarrow \M$ is a line bundle at level $k$ with invariant connection $\nabla^L$, where the first Chern form $c_1(\nabla^L)=k\omega_\M$, and $\nabla^L$ satisfies Kostant's condition (cf. \cite{AMWVerlinde}):
\[ \L^L_{\xi}-\nabla^L_{\xi_\M}=2\pi \i k\pair{\Phi_\M}{\xi}, \qquad \xi \in L\g\oplus 0 \subset \wh{L\g}.\]

In joint work with E. Meinrenken \cite{LMSspinor} we constructed a canonical spinor module $S_0$ for $\Cliff(p^\ast TM)$, where $p \colon \M \rightarrow M=\M/\Omega G$ is the quotient map; in \cite{LMSspinor} $S_0$ was referred to as a `twisted spin-c structure' for $M$.  $S_0$ is at level $\hvee$, the dual Coxeter number of $G$.  Let $S=S_0 \otimes L$, a spinor module for $p^\ast TM$ at level $k+\hvee$.

Recall that the `global transversal' $\Y$ embeds $N_G(T)\ltimes \Lambda$-equivariantly into $\M$ (Section \ref{sec:GlobTrans}), with $Tp$ inducing an isomorphism $T\Y \simeq p^\ast TM|_{\Y}$.  Hence by restriction to $\Y$ we obtain an $N_G(T)\ltimes \wh{\Lambda}$-equivariant spinor module for $\Y$, that we also denote by $S$.  Choosing a compatible $N_G(T)\ltimes \wh{\Lambda}$-invariant connection, we obtain a spin-c Dirac operator $\st{D}^S$ acting on sections of $S$.

Let $\n_- \subset \g_{\bC}$ denote the sum of the negative root spaces of $\g$, and let $\Bott(\n_-)=[(\wedge \n_-,\theta)] \in \K^0_T(\g/\t)$ denote the Bott-Thom element for $\g/\t$; here $\theta$ is an odd self-adjoint endomorphism of $(\g/\t)\times \wedge \n_-$, invertible away from the origin (cf. \cite[Section 4.5]{LSQuantLG}).  We may choose $\theta$ so that the pullback via $\phi^{\g/\t}$ of the pair $(\wedge \n_-,\theta)$ satisfies the conditions of Section \ref{sec:FirstOrdY}.  Let $\st{D}$ denote the Dirac operator acting on sections of $\wedge \n_- \wh{\otimes}S$ obtained by coupling $\st{D}^S$ to the $\bZ_2$-graded bundle $E=\Y \times \wedge \n_-$, and $\scr{D}$ the operator described in Theorem \ref{thm:FredholmI}.  According to the latter theorem, $\scr{D}$ is $T$-Fredholm.

With this setup, we may state a version of the $[Q,R]=0$ Theorem for proper Hamiltonian $LG$-spaces.  For simplicity suppose $G$ acts freely on $\Phi_{\M}^{-1}(0)$, so that the reduced space $\M_{\tn{red}}=\Phi_{\M}^{-1}(0)/G$ is a smooth, finite-dimensional compact symplectic manifold with prequantum line bundle $L_{\tn{red}}=L|_{\Phi^{-1}_{\M}(0)}/G$.  Choose a compatible almost complex structure on $\M_{\tn{red}}$ and let $\dirac$ denote the Dolbeault-Dirac operator twisted by $L_{\tn{red}}$ acting on $\wedge T^\ast_{0,1}\M_{\tn{red}}\otimes L_{\tn{red}}$.
\begin{theorem}
\label{thm:QR0}
Let $\Phi_{\M} \colon \M \rightarrow L\g^\ast$ be a proper Hamiltonian $LG$-space, with level $k>0$ prequantum line bundle $L$.  Assume $G$ acts freely on $\Phi_{\M}^{-1}(0)$.  Let $\scr{D}$, $\dirac$ be the operators described above.  Then $\index(\scr{D})_0=\index(\dirac)$.
\end{theorem}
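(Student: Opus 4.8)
The plan is to derive Theorem \ref{thm:QR0} from the norm-square localization formula \eqref{eqn:NormSqrLoc} of Theorem \ref{thm:NormSqrLoc} by extracting the coefficient of the trivial weight $0 \in \Lambda^\ast$ on both sides, obtaining
\[ \index(\scr{D})_0 = \sum_{\beta} \index(\sigma_{\beta,\theta}\otimes \Sym(\nu_\beta))_0. \]
It then remains to prove (i) that the contribution of each $\beta \ne 0$ vanishes, and (ii) that the contribution of $\beta = 0$ equals $\index(\dirac)$. This mirrors Paradan's deduction of $[Q,R]=0$ from the non-abelian localization formula in the compact case \cite{ParadanRiemannRoch}.

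For (i) I would follow the template of Paradan \cite{ParadanRiemannRoch} and Tian--Zhang \cite{TianZhang}. Because $T_\beta$ fixes $\mathring{U}_\beta^{T_\beta}$ and $\nu_\beta$ carries the $\beta$-polarized complex structure, every $T_\beta$-weight occurring in $\Sym(\nu_\beta)$ pairs non-negatively with $\beta$. Combining this with the value taken on $Z_\beta$ by the spin-c moment map $\mu$ of $S = S_0 \otimes L$---a value equal to $\beta$ up to a shift governed by the level $k + \hvee$ and the correction \eqref{eqn:InducedDetLine} coming from $\det(\nu_\beta)$---one finds that every weight $\lambda$ appearing in $\index(\sigma_{\beta,\theta}\otimes \Sym(\nu_\beta))$ satisfies $\pair{\lambda}{\beta} \ge c_\beta$. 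The crux is the strict positivity $c_\beta > 0$; this is the \emph{inequality involving the data of the affine Lie algebra $\wh{L\g}$} referred to in the introduction, and is the one ingredient I would import from \cite{YiannisThesis}. Granting it, the coefficient of $0$ in the $\beta$-term vanishes for every $\beta \ne 0$.

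For (ii), note that $T_0$ is trivial, so $\mathring{U}_0^{T_0} = \mathring{U}_0$, $\nu_0 = 0$, and the $\beta = 0$ term is just $\index(\sigma_{0,\theta})_0$, where $\sigma_{0,\theta}$ is a $T$-transversally elliptic symbol on a neighborhood $\mathring{U}_0$ of $Z_0 \cap \X = \Phi_{\M}^{-1}(0)$. Here the Bott--Thom factor $\theta$, which is the pullback under $\phi^{\g/\t}$ of the Bott--Thom element for $\g/\t$ and plays the role of a Poincar\'e dual of $\X$ in $\Y$, confines the relevant K-theory class to a neighborhood of $\Phi_{\M}^{-1}(0)$ inside $\X$. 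Since $G$ acts freely on $\Phi_{\M}^{-1}(0)$ and $\M_{\tn{red}} = \Phi_{\M}^{-1}(0)/G$, a local reduction argument near $\Phi_{\M}^{-1}(0)$---in the spirit of the free-quotient case of $[Q,R]=0$, together with the interpretation of $\theta$ as Poincar\'e dual of the cross-section---identifies $\index(\sigma_{0,\theta})_0$ with the index of the spin-c Dirac operator on $\M_{\tn{red}}$ for the spinor module induced near $\Phi_{\M}^{-1}(0)$ by $S_0 \otimes L$. A slightly more refined local description of $S_0$ near $\Phi_{\M}^{-1}(0)$ than the one recalled here then identifies this induced structure, up to a K-theoretically trivial twist, with $\wedge T^\ast_{0,1}\M_{\tn{red}} \otimes L_{\tn{red}}$, so that the index is $\index(\dirac)$.

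I expect step (i), and specifically the positivity $c_\beta > 0$, to be the main obstacle: it is the point at which the arithmetic of the affine Lie algebra $\wh{L\g}$ (the positivity of $k + \hvee$, the location of $\beta \in \B$ in the positive chamber, and the $\beta$-polarization of $\nu_\beta$) genuinely enters, and it is the ingredient I would not reprove here. Step (ii), while requiring the refined local description of $S_0$, is a local computation near $\Phi_{\M}^{-1}(0)$ and is routine in spirit.
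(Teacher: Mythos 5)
Your proposal follows essentially the same route as the paper's own (sketched) argument: extract the multiplicity of the trivial weight from Theorem \ref{thm:NormSqrLoc}, kill the $\beta\ne 0$ contributions via the half-space support bound whose strict positivity ($d_\beta>0$ in \eqref{eqn:dbeta}, Theorem \ref{thm:Bounddbeta}) is exactly the affine-Lie-algebra inequality imported from \cite{YiannisThesis}/\cite{LMVerlindeQR}, and identify the $\beta=0$ term with $\index(\dirac)$ by a local normal form argument near $\Phi_{\M}^{-1}(0)$ (Proposition \ref{prop:localQR}). The one point you gloss over is that the sum in \eqref{eqn:NormSqrLoc} runs over all $\beta$ with $\Y^\beta\cap\phi^{-1}(\beta)\ne\emptyset$, whereas the positivity $d_\beta>0$ is only available for those $\beta$ with $\X^\beta\cap\phi^{-1}(\beta)\ne\emptyset$; the paper handles the remaining $\beta$ by the elementary Proposition \ref{prop:SuppBott} (the pulled-back Bott--Thom factor makes $\sigma_{\beta,\theta}$ a product symbol supported on $\X^\beta\cap\phi^{-1}(\beta)$, which is then empty, so that contribution vanishes outright), and you should invoke this observation for those $\beta$ rather than the positivity statement.
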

\begin{remark}
There is a similar statement when $0$ is a regular value of $\Phi_{\M}$, in which case $\M_{\tn{red}}$ is only an orbifold in general.  There is also a statement when $0$ is not necessarily a regular value of $\Phi_{\M}$, involving a shift (partial) desingularization as in \cite{MeinrenkenSjamaar}.
\end{remark}

Let us reformulate Theorem \ref{thm:QR0} to highlight its similarity with other instances of the $[Q,R]=0$ phenomenon.  We define the `quantization' of the finite-dimensional symplectic manifold $\M_{\tn{red}}$ to be the `Riemann-Roch number':
\begin{equation} 
\label{eqn:DefineQred}
Q(\M_{\tn{red}},L_{\tn{red}})=\index(\dirac) \in \bZ.
\end{equation}

Let $R_k(G)$ denote the level $k$ \emph{fusion ring} (or \emph{Verlinde algebra}), a finite rank $\bZ$-module (and ring) generated by the irreducible level $k$ positive energy representations of $\wh{LG}$ (cf. \cite{PressleySegal}, \cite[Appendix D]{MeinrenkenKHomology}).  A positive energy representation is, in particular, a representation of the semi-direct product $S^1_{\tn{rot}}\ltimes \wh{LG}$ ($S^1_{\tn{rot}}$ acts on $LG$ by loop rotation, and this action lifts to an action on $\wh{LG}$).  Elements $V \in R_k(G)$ have formal characters $\tn{ch}\, V \in R^{-\infty}(S^1_{\tn{rot}}\times T)$, given by the Weyl-Kac character formula \cite{PressleySegal, KacBook}.

In \cite[Section 4.5]{LSQuantLG} we proved that $\index(\scr{D}) \in R^{-\infty}(T)$ is anti-symmetric for the $\rho$-shifted level $(k+\hvee)$ action of the affine Weyl group on $\Lambda^\ast$, given by
\[ w\bullet_{k+\hvee} \lambda = \ol{w}(\lambda+\rho)-\rho + (k+\hvee) \kappa(\eta), \qquad w=(\ol{w},\eta) \in W \ltimes \Lambda=W_{\tn{aff}},\]
where recall $\kappa \colon \Lambda \rightarrow \Lambda^\ast$ is the map induced by the musical isomorphism $\t \rightarrow \t^\ast$ for the basic inner product.

Recall that by the Weyl character formula, characters of $G$ are in 1-1 correspondence with $\rho$-shifted $W$-anti-symmetric characters of $T$.  Likewise by the Weyl-Kac character formula, the above $W_{\aff}$-anti-symmetry implies that there is a unique element of the level $k$ fusion ring $Q(\M,L) \in R_k(G)$ such that
\begin{equation} 
\label{eqn:DefineQ}
\Big(\Delta \cdot \tn{ch}\,Q(\M,L)\Big)\Big|_{q=1}=\index(\scr{D}) \in R^{-\infty}(T)
\end{equation}
where $\Delta=\prod_{\alpha \in \R_{\tn{aff},+}}(1-e_{-\alpha})$ is the Weyl-Kac denominator, and we restrict to $q=1 \in S^1_{\tn{rot}}$.  Partly motivated by this, in \cite[Section 4.6]{LSQuantLG} (combined with the result in \cite[Section 4.7]{LSQuantLG}), we took \eqref{eqn:DefineQ} as the \emph{definition} of the quantization $Q(\M,L)$ of $(\M,L)$; see there for details.

The \emph{minimal} irreducible positive energy representation of $\wh{LG}$ at level $k$ is the one labelled by the highest weight $(k,0) \in \bN \times \Lambda^\ast$, cf. \cite{PressleySegal}.  It follows from the Weyl-Kac formula that the multiplicity of the minimal irreducible representation in $V \in R_k(G)$ is equal to the multiplicity of the trivial representation in $\Delta \cdot \tn{ch}\, V|_{q=1}$.  Thus Theorem \ref{thm:QR0} is equivalent to: 
\begin{corollary}
Let $(\M,L)$ be as in Theorem \ref{thm:QR0}.  Then $Q(\M_{\tn{red}},L_{\tn{red}})$ equals the multiplicity of the minimal level $k$ irreducible positive energy representation in $Q(\M,L)$.
\end{corollary}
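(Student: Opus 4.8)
The plan is to obtain the corollary as an immediate reformulation of Theorem~\ref{thm:QR0}, using the dictionary between $R_k(G)$ and $\rho$-shifted, $W_{\tn{aff}}$-anti-symmetric elements of $R^{-\infty}(T)$ recalled above. The three ingredients I would assemble are: (i) the defining property \eqref{eqn:DefineQ} of $Q(\M,L)$, namely $\big(\Delta \cdot \tn{ch}\,Q(\M,L)\big)\big|_{q=1}=\index(\scr{D})$ in $R^{-\infty}(T)$, together with the fact (established in \cite[Section~4.5]{LSQuantLG}) that the relevant $W_{\tn{aff}}$-anti-symmetry makes $Q(\M,L)$ a well-defined element of $R_k(G)$; (ii) the consequence of the Weyl--Kac character formula, stated just before the corollary, that for any $V \in R_k(G)$ the multiplicity in $V$ of the minimal level $k$ irreducible positive energy representation (the one with highest weight $(k,0) \in \bN\times\Lambda^\ast$) equals the multiplicity of the trivial $T$-representation in $\big(\Delta \cdot \tn{ch}\,V\big)\big|_{q=1}$; and (iii) the defining equation \eqref{eqn:DefineQred}, $Q(\M_{\tn{red}},L_{\tn{red}})=\index(\dirac)$.

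Granting these, the argument is a short chain of equalities. Applying (ii) with $V=Q(\M,L)$ and inserting (i), the multiplicity of the minimal irreducible in $Q(\M,L)$ equals the multiplicity of the trivial $T$-representation in $\index(\scr{D})$, i.e.\ the coefficient $\index(\scr{D})_0$. By Theorem~\ref{thm:QR0} this coefficient equals $\index(\dirac)$, which by (iii) is $Q(\M_{\tn{red}},L_{\tn{red}})$. Stringing the equalities together gives the asserted identity. This step is purely formal representation-theoretic bookkeeping, so I do not expect any obstacle in it.

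The real content, and the main obstacle, lies in Theorem~\ref{thm:QR0} itself, i.e.\ the identity $\index(\scr{D})_0=\index(\dirac)$, which the paper does not prove in full. As indicated in the introduction, one would feed the norm-square localization formula \eqref{eqn:NormSqrLoc} into a local analysis near the zero fibre $\Phi_{\M}^{-1}(0)$: first showing that among the terms $\index(\sigma_{\beta,\theta}\otimes\Sym(\nu_\beta))$ only the $\beta=0$ term can contribute to the multiplicity of the trivial $T$-representation (this is where the affine Lie algebra inequality alluded to in the introduction is used to kill the $\beta\neq 0$ contributions), and then identifying the $\beta=0$ contribution $\index(\sigma_{0,\theta}\otimes\Sym(\nu_0))_0$ with $\index(\dirac)$ by means of the refined local description of the spin-c structure $S_0$ near $\Phi_{\M}^{-1}(0)$. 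These are precisely the two points deferred to \cite{YiannisThesis} and \cite{LMVerlindeQR}.
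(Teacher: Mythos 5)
Your proposal is correct and matches the paper's own treatment: the paper presents this corollary as an immediate reformulation of Theorem \ref{thm:QR0}, using exactly the same dictionary — the definition \eqref{eqn:DefineQ} of $Q(\M,L)$, the Weyl--Kac fact that the multiplicity of the minimal level $k$ irreducible in $V$ equals the multiplicity of the trivial representation in $\Delta\cdot\tn{ch}\,V|_{q=1}$, and the definition \eqref{eqn:DefineQred} of $Q(\M_{\tn{red}},L_{\tn{red}})$. Your closing remarks about the deferred content of Theorem \ref{thm:QR0} itself also agree with the paper's steps (1)--(4).
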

\begin{remark}
A result of \cite{LoizidesGeomKHom} proves that the definition of the quantization of $(\M,L)$ given here (or \cite{LSQuantLG}) is equivalent to the definition of E. Meinrenken \cite{MeinrenkenKHomology} (the latter approach is via q-Hamiltonian spaces, twisted K-homology, and the Freed-Hopkins-Teleman theorem).  Thus Theorem \ref{thm:QR0} implies the $[Q,R]=0$ Theorem for that definition as well.  The latter theorem had been proven much earlier in \cite{AMWVerlinde} using symplectic cutting techniques and detailed analysis of the fixed-point expressions (in fact in \cite{AMWVerlinde}, Atiyah-Segal-Singer fixed-point expressions were used as a make-shift \emph{definition} of the quantization of $(\M,L)$, see \cite{MeinrenkenKHomology} for further explanation).
\end{remark}

The complete proof of Theorem \ref{thm:QR0} is not presented here, as it would take us too far from the main topics of this article.  The missing arguments are either already explained in the literature, or will be explained in \cite{LMVerlindeQR} (see also \cite{YiannisThesis}).  We settle for brief remarks on the main steps (1)--(4):

\medskip
\noindent (1) \textbf{A local $[Q,R]=0$ result.}\\
The transversally elliptic symbol $\sigma_{0,\theta}$ is defined on a small open neighborhood $\mathring{U}_0$ of $\Phi_{\M}^{-1}(0)$ in $\Y$ (viewing $\Y$ as a submanifold of $\M$).  For this reason, it is not too difficult to relate its index to the index of $\dirac$ on the reduced space using, for example, a local normal form near $\Phi_{\M}^{-1}(0)$.  By the cross-section theorem for Hamiltonian loop group spaces \cite[Theorem 4.8]{MWVerlindeFactorization}, one has the same local normal forms for $\mathring{U}_0$ available as in the case of a compact Hamiltonian $G$-space, hence this part of the argument is similar to the argument for compact Hamiltonian $G$-spaces explained in \cite[Theorem 8.3, Proposition 12.5]{WittenNonAbelian} or \cite[Section 6]{ParadanRiemannRoch},\footnote{These papers also handle the singular case, which carries over to our setting as well.} and leads to:
\begin{proposition} 
\label{prop:localQR}
$\index(\dirac)=\index(\sigma_{0,\theta})_0$.
\end{proposition}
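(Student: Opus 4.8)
The plan is to reduce Proposition~\ref{prop:localQR} to the corresponding, and known, statement for a compact prequantized Hamiltonian $G$-space. Observe first that $\sigma_{0,\theta}$ is supported (in the conormal directions to the $T$-orbits) on an arbitrarily small neighbourhood $\mathring{U}_0$ of $\Phi_{\M}^{-1}(0)$ inside $\Y$: indeed $v_{\sY}$ vanishes on $\phi^{-1}(0)$ and $\theta$ is invertible off $\X$, so the characteristic set of $\sigma_{0,\theta}$ meets $T^\ast_T\mathring{U}_0$ inside the zero section over $\X\cap\phi^{-1}(0)=\Phi_{\M}^{-1}(0)$. Next, since $0\in\t^\ast\subset L\g^\ast$ is fixed by the gauge action of the constant loops $G$ and $\Phi_{\M}$ is proper, the cross-section theorem for Hamiltonian loop group spaces \cite[Theorem 4.8]{MWVerlindeFactorization} provides a $G$-invariant open neighbourhood $\M_0$ of $\Phi_{\M}^{-1}(0)$ in $\M$ which is an \emph{ordinary} finite-dimensional Hamiltonian $G$-space, with moment map $\mu_0$ (obtained from $\Phi_{\M}|_{\M_0}$ via a connection-induced identification of a neighbourhood of $0$ in $L\g^\ast$ with one in $\g^\ast$) and prequantum line bundle $L|_{\M_0}$; since $G$ acts freely on $\Phi_{\M}^{-1}(0)$, $0$ is a regular value of $\mu_0$ with $\mu_0^{-1}(0)=\Phi_{\M}^{-1}(0)$, and $\mu_0^{-1}(0)/G$ is symplectomorphic to $\M_{\tn{red}}$ with induced prequantum bundle $L_{\tn{red}}$. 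Shrinking if necessary one may assume $\mathring{U}_0$ is an open $T$-invariant subset of $\M_0$; by the transversality conditions defining $\Y$, $\X\cap\mathring{U}_0=\mu_0^{-1}(\t^\ast)\cap\mathring{U}_0$ is then a smooth $T$-invariant submanifold of codimension $\dim(\g/\t)$ with trivial normal bundle, fibre $\g/\t$ and adjoint $T$-action, and $\theta$ (the pullback along $\phi^{\g/\t}$ of the Bott--Thom endomorphism) represents the corresponding Thom class, which is at the same time the Bott class entering the abelianization of the $G$-space $\M_0$. The precise matching of the spin-c structure $S$ on $\Y$ with the $G$-equivariant spin-c structure of $\M_0$ along $\mathring{U}_0$, including the $\rho$-shift, is the refined local description of $S_0$ alluded to in the introduction, which I would take as input from \cite{YiannisThesis,LMVerlindeQR}.

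With this model in hand I would pass from $T$ to $G$. On $\mathring{U}_0$ one has $v_{\sY}=\chi(|\phi|^2)\,\phi_{\sY}$, where $\phi_{\sY}|_y:=\phi(y)_{\sY}|_y$ is the vector field generated by $\phi$; since $\chi>0$, replacing $v_{\sY}$ by $\phi_{\sY}$ inside $\sigma_{0,\theta}$ is a homotopy through transversally elliptic symbols and leaves $\index(\sigma_{0,\theta})$ unchanged. After this replacement $\sigma_{0,\theta}=\theta\wh{\otimes}1+\i\wh{\otimes}\c(\xi-\phi_{\sY})$ is exactly the $T$-equivariant Witten-type transversally elliptic symbol attached to the local model, and the passage to $\M_0$ is the \emph{abelianization} step: by the localization results of Paradan \cite[Theorem 5.8, Proposition 6.4]{ParadanRiemannRoch} and Paradan--Vergne \cite{WittenNonAbelian}, the multiplicity of the trivial $T$-representation in $\index(\sigma_{0,\theta})$ equals the multiplicity of the trivial $G$-representation in the index of the corresponding $G$-equivariant Witten-deformed spin-c Dirac operator on $\M_0$ localized near $\mu_0^{-1}(0)$. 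Here the Bott--Thom class $\theta$ (equivalently the Weyl denominator $\sum_k(-1)^k\tn{ch}(\wedge^k\n_-)$) is precisely what converts the $T$-index on the transversal into the $G$-index, and the $\rho$-shift recorded above is what matches `trivial $T$-rep' with `trivial $G$-rep'. Since $\M_0$ is non-compact one first embeds it $G$-equivariantly into a compact $G$-manifold, extending $\sigma_{0,\theta}$ by an invertible symbol outside a neighbourhood of $\Phi_{\M}^{-1}(0)$ — just as $M$ was doubled to $DM$ in Section~\ref{sec:DefTransEll} — leaving the index unchanged by excision for transversally elliptic symbols \cite{AtiyahTransEll,ParVerTransEll}.

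Finally I would invoke the compact case: since $0$ is a regular value of $\mu_0$, $G$ acts freely on $\mu_0^{-1}(0)$, and the operator in question is the Witten-deformed spin-c Dirac operator localized near $\mu_0^{-1}(0)$, the multiplicity of the trivial $G$-representation in its index is $\index(\dirac)$, the Riemann--Roch number of $(\M_{\tn{red}},L_{\tn{red}})$; this is the localized $[Q,R]=0$ theorem for compact prequantized Hamiltonian $G$-spaces \cite[Theorem 8.3, Proposition 12.5]{WittenNonAbelian},\cite[Section 6]{ParadanRiemannRoch}. Combined with the previous step this gives $\index(\sigma_{0,\theta})_0=\index(\dirac)$. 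The main obstacle is the first step — making the local identification of the spin-c structure $S$ with that of $\M_0$ precise enough to control the $\rho$-shift. This is exactly the input the authors state is required but do not reproduce here; granting it, the passage from the $T$-picture to the $G$-picture and the reduction to the compact case are formal consequences of the cross-section theorem, excision, and Paradan's abelianization and $[Q,R]=0$ theorems in the compact case.
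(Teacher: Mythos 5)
Your proposal follows essentially the same route as the paper: the paper's own treatment of this proposition is likewise a sketch that invokes the cross-section theorem of Meinrenken--Woodward to obtain the same local normal forms near $\Phi_{\M}^{-1}(0)$ as for a compact Hamiltonian $G$-space, and then appeals to the local $[Q,R]=0$ arguments of Paradan and Paradan--Vergne, deferring the refined local description of the spinor module $S_0$ (and the attendant $\rho$-shift) to \cite{YiannisThesis,LMVerlindeQR}. Your additional remarks on the homotopy $v_{\sY}\rightsquigarrow\phi_{\sY}$, excision, and the role of the Bott--Thom element in the abelianization are consistent elaborations of that same argument rather than a different approach.
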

\noindent In other words, $\index(\dirac)$ equals the multiplicity of the trivial representation in $\index(\sigma_{0,\theta})$, the contribution of $\beta=0$ in Theorem \ref{thm:NormSqrLoc}.

\medskip
\noindent (2) \textbf{A vanishing result when $\X^\beta \cap \phi^{-1}(\beta)=\emptyset$.}
\begin{proposition}
\label{prop:SuppBott}
$\index(\sigma_{\beta,\theta}\otimes \Sym(\nu_\beta))$ vanishes unless $\X^\beta \cap \phi^{-1}(\beta)\ne \emptyset$.  
\end{proposition}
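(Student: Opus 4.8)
The plan is to show that, under the hypothesis $\X^\beta\cap\phi^{-1}(\beta)=\emptyset$, the transversally elliptic symbol $\sigma_{\beta,\theta}$ of \eqref{eqn:ProdSymb} is in fact invertible on the \emph{entire} conormal bundle $T^\ast_T\mathring{U}_\beta^{T_\beta}$, hence represents the zero class in $\K^0_T(T^\ast_T\mathring{U}_\beta^{T_\beta})$; the same then holds after twisting by each $\Sym^k(\nu_\beta)$, and since the $T$-index of a transversally elliptic symbol depends only on its class in $\K^0_T(T^\ast_T(-))$, all the contributions to $\index(\sigma_{\beta,\theta}\otimes\Sym(\nu_\beta))$ vanish.

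The crucial step is to locate the characteristic set of $\sigma_{\beta,\theta}(\xi)=\theta\wh{\otimes}1+\i\wh{\otimes}\c_\beta(\xi-v_{\sY})$ inside $T^\ast_T\mathring{U}_\beta^{T_\beta}$. Recall that $\theta$ is the pullback under $\phi^{\g/\t}$ of the Bott--Thom endomorphism of $\wedge\n_-$, which is invertible away from the origin of $\g/\t$; hence $\theta(y)$ is invertible precisely for $y\notin\X=(\phi^{\g/\t})^{-1}(0)$. Since $\theta$ acts on the $E$-factor and $\i\c_\beta(\xi-v_{\sY})$ on the $S_\beta$-factor, and these graded-commute, one has $\sigma_{\beta,\theta}(\xi)^2=\theta^2\wh{\otimes}1+1\wh{\otimes}|\xi-v_{\sY}|^2$, which is positive definite unless simultaneously $\theta(y)$ is non-invertible (i.e.\ $y\in\X$) and $\xi=v_{\sY}(y)$. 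But $v_{\sY}$ is tangent to the $T$-orbits, so a covector $\xi\in T^\ast_T\mathring{U}_\beta^{T_\beta}$ conormal to the orbits can equal $v_{\sY}(y)$ only if $\xi=0$ and $v_{\sY}(y)=0$, i.e.\ $y\in Z_\beta$. Therefore $\sigma_{\beta,\theta}$ is non-invertible over a point of $T^\ast_T\mathring{U}_\beta^{T_\beta}$ (including over the zero section, where the same product-symbol computation applies) only if that point lies over $Z_\beta\cap\X\cap\mathring{U}_\beta^{T_\beta}=\X^\beta\cap\phi^{-1}(\beta)\cap\mathring{U}_\beta^{T_\beta}$, which is empty by hypothesis; multiplying by $\id_{\Sym^k(\nu_\beta)}$ does not change this locus.

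Given that $\sigma_{\beta,\theta}\otimes\Sym^k(\nu_\beta)$ is invertible on all of $T^\ast_T\mathring{U}_\beta^{T_\beta}$, I would conclude as in Section \ref{sec:DefTransEll}: stabilizing by a bundle $F_k$ so that $(E\wh{\otimes}S_\beta)^+\otimes\Sym^k(\nu_\beta)\oplus F_k$ is trivial, the symbol extends by the identity across $\partial\mathring{U}_\beta$ to a symbol on a compact $T$-manifold containing $\mathring{U}_\beta^{T_\beta}$ (e.g.\ the $T_\beta$-fixed submanifold of $DM$) which is invertible on the whole conormal bundle, hence represents $0$ in the relevant $\K^0_T$. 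Thus $\index(\sigma_{\beta,\theta}\otimes\Sym^k(\nu_\beta))=0$ for every $k\ge0$, and summing over $k$ gives $\index(\sigma_{\beta,\theta}\otimes\Sym(\nu_\beta))=0$. The main obstacle is precisely the characteristic-set computation of the previous paragraph — identifying the transverse characteristic set with $\X^\beta\cap\phi^{-1}(\beta)$ via the product-symbol structure and the tangency of $v_{\sY}$ to the orbits; once this is in hand the rest is formal.

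As a cross-check (and an alternative route avoiding K-theory), one can argue analytically via the identification $\index(\sigma_{\beta,\theta}\otimes\Sym(\nu_\beta))=\index(\scr{D}_1^M)$ established in Section \ref{sec:DefTransEll}. When $\X^\beta\cap\phi^{-1}(\beta)=\emptyset$, the continuous function $|v_M|^2+f^2\theta^2$ is strictly positive on the compact set $\ol{M}$: at any $y\in\ol{M}$ with $v_M(y)=0$ one has $y\in Z_\beta$ (since $\ol{M}$ meets no other component of the vanishing locus), hence $y\notin\X$, hence $\theta(y)^2>0$. So this function is bounded below by a positive constant on $\ol{M}$, and the Bochner estimate \eqref{eqn:BoundedM1}--\eqref{eqn:VBoundM1} then makes $(\scr{D}_t^M)^2_\lambda$ bounded below by an arbitrarily large constant for $t$ large, so $\ker(\scr{D}_t^M)_\lambda=0$ and $\index(\scr{D}_1^M)_\lambda=0$.
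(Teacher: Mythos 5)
Your proof is correct and takes essentially the same route as the paper: you identify the support of the product symbol $\sigma_{\beta,\theta}$ inside the conormal directions with $Z_\beta\cap\X=\X^\beta\cap\phi^{-1}(\beta)$ (the paper cites the fact that the support of a product symbol is the intersection of the supports, while you verify it directly via $\sigma_{\beta,\theta}(\xi)^2=\theta^2\wh{\otimes}1+|\xi-v_{\sY}|^2$ and the tangency of $v_{\sY}$ to the orbits), and conclude that empty support forces the class in $\K^0_T(T^\ast_T\mathring{U}^\beta)$, hence the index, to vanish. The analytic cross-check via the Bochner estimate for $\scr{D}_t^M$ is sound but not needed.
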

\begin{proof}
The Bott element $\Bott(\n_-)$ is supported at $0 \in \g/\t$, so its pullback to $\Y$ is supported on $\X=(\phi^{\g/\t})^{-1}(0)$.  Recall $\sigma_{\beta,\theta}$ is a product symbol (see \eqref{eqn:ProdSymb}) of the pullback $\theta$ of the Bott symbol (supported on $\X$) with the symbol $\i \c_\beta(\xi-v_{\sY})$ (supported on $\Y^\beta \cap \phi^{-1}(\beta)$).  The support of a product symbol is the intersection of the supports, so $\sigma_{\beta,\theta}$ is supported on $\X^\beta \cap \phi^{-1}(\beta)$.  If the latter is empty then the support of $\sigma_{\beta,\theta}$ is empty, and the index vanishes.
\end{proof}

As a small aside, recall that $\X \simeq \Phi_{\M}^{-1}(\t)\subset \M$, and moreover $\Phi_{\M}|_{\X}=\phi|_{\X}$.  As $\beta \in \t$ we have $\X^\beta \cap \phi^{-1}(\beta)=\M^\beta \cap \Phi_{\M}^{-1}(\beta)$.  It follows from this that ($W$-orbits) of non-trivial contributions in \eqref{eqn:NormSqrLoc} correspond to the components of the critical set of $\|\Phi_\M\|^2$, cf. \cite{Kirwan,BottTolmanWeitsman,DHNormSquare}.

\medskip
\noindent (3) \textbf{Bounds on the support of $\index(\sigma_{\beta,\theta}\otimes \Sym(\nu_\beta))$ for $\beta \ne 0$.}\\
Let $\beta \ne 0$.  Since $T_\beta$ acts trivially on $(U_\beta)^\beta$, and because the weights of the $T_\beta$ action on $\Sym(\nu_\beta)$ are $\beta$-polarized, it follows (cf. \cite{ParadanRiemannRoch, WittenNonAbelian} for similar discussions) that the multiplicity function for $\index(\sigma_{\beta,\theta} \otimes \Sym(\nu_\beta))$ is supported in a half-space of the form $\{\xi \in \t|\pair{\xi}{\beta} \ge d_\beta \}$, where $d_\beta$ is a constant given by
\begin{equation} 
\label{eqn:dbeta}
d_\beta = \inf_{\alpha \in \wt(\wedge \n_- \otimes S_\beta)} \pair{\alpha}{\beta},
\end{equation}
the infimum being taken over the set of complex weights for the action of $T_\beta$ on $\wedge \n_- \otimes S_\beta$.  One proves the following:
\begin{theorem}
\label{thm:Bounddbeta}
For each $\beta \in W \cdot \B$ such that $\X^\beta \cap \phi^{-1}(\beta) \ne \emptyset$, the constant $d_\beta>0$.
\end{theorem}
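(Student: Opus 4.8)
The plan is to reduce $d_\beta$ to a weight computation at a single point of the relevant critical component, and then to exhibit a quadratic-in-$\beta$ term that dominates all corrections. Since the $T_\beta$-weight multiset of a tensor product of equivariant bundles is the Minkowski sum of those of the factors,
\[ d_\beta=\min_{\gamma\in\wt(\wedge\n_-)}\pair{\gamma}{\beta}+\min_{\delta\in\wt(S_\beta)}\pair{\delta}{\beta}, \]
and by Weyl symmetry we may take $\beta\in\t_+$. The first term is elementary: the $T_\beta$-weights of $\wedge\n_-$ are $-\sum_{\alpha\in P}\alpha$ for subsets $P$ of positive roots, so the minimum equals $-\sum_{\alpha>0,\,\pair{\alpha}{\beta}>0}\pair{\alpha}{\beta}=-\pair{2\rho_\beta}{\beta}$, where $\rho_\beta$ denotes half the sum of the positive roots not orthogonal to $\beta$. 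For the second term, fix a connected component of $\mathring{U}^\beta$ meeting $Z_\beta\cap\X$ and a point $y$ in it; since $\X^\beta\cap\phi^{-1}(\beta)=\M^\beta\cap\Phi_\M^{-1}(\beta)$ we may take $\phi(y)=\beta$ and $\Phi_\M(y)=\beta$ (the constant connection), and all weights below are constant on the component and evaluated at $y$.

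The key point for the second term is that $T_\beta$ acts trivially on $\mathring{U}^\beta$, hence on $T\mathring{U}^\beta$ and on $\Cliff(T\mathring{U}^\beta)$; therefore all $T_\beta$-weights of the spinor module $S_\beta$ have the same pairing with $\beta$, namely $\tfrac12\pair{c_1(\det S_\beta)}{\beta}$ (this uses $\det S_\beta=\Hom_{\Cliff(T\mathring{U}^\beta)}(S_\beta^\ast,S_\beta)$, which carries twice the relevant weight). Combining the determinant-line identity \eqref{eqn:InducedDetLine}, $\det(S_\beta)=\det(S)\otimes\det(\nu_\beta)$, with $S=S_0\otimes L$ gives $c_1(\det S_\beta)=c_1(\det S_0)+2\,c_1(L)+c_1(\nu_\beta)$, and I would evaluate the three terms: the Kostant condition for the level-$k$ prequantum bundle $L$ gives $\pair{c_1(L)}{\beta}=k\pair{\beta}{\beta}$; the analogous identity for $\det(S_0)$ recalled from \cite{LMSspinor} (level $2\hvee$, together with the shift it carries) gives $\pair{c_1(\det S_0)}{\beta}=2\hvee\pair{\beta}{\beta}$ up to a term linear in $\beta$; and the $\beta$-polarization of $\nu_\beta$ — after identifying the $\g/\t$-summand of $\nu_\beta$ with the $\beta$-polarized exterior directions, using the description of $\Y$ as a thickening of $\X$ in the $\g/\t$-directions — gives $\pair{c_1(\nu_\beta)}{\beta}=\pair{2\rho_\beta}{\beta}+(\text{a non-negative contribution from the symplectic normal of }\X^\beta\text{ in }\X)$. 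Assembling, the $\pair{2\rho_\beta}{\beta}$-terms cancel and
\[ d_\beta=(k+\hvee)\pair{\beta}{\beta}+c_\beta, \]
where $c_\beta$ is the sum of a non-negative term and a remainder that is $O(|\beta|)$.

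Positivity then splits into two regimes. The remainder in $c_\beta$ is $O(|\beta|)$ with a constant independent of $\beta$: here one uses that $T\Y\simeq p^\ast TM$ carries the trivial (level-zero) central action, so the at most $\dim(\Y)$ many $T_\beta$-weights of $\nu_\beta$ lie in the fixed finite set of $T$-weights occurring in $TM$ over the compact manifold $M$, and the shift appearing in the $\det(S_0)$-identity is likewise bounded by a constant multiple of $|\beta|$. Since $k>0$ and $\hvee>0$, there is an $R_0$ with $d_\beta>0$ whenever $|\beta|>R_0$; and since $\B$ is discrete, only finitely many $\beta\in W\cdot\B$ with $\X^\beta\cap\phi^{-1}(\beta)\ne\emptyset$ have $|\beta|\le R_0$. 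For those finitely many $\beta$ one invokes the inequality involving the data of the affine Lie algebra $\wh{L\g}$ referred to in the introduction (established in \cite{YiannisThesis}, see also \cite{LMVerlindeQR}): the fact that $\beta$ labels a non-trivial component of the critical set of $\|\Phi_\M\|^2$ forces $\beta$, together with the level $k+\hvee$, into a region of the affine Weyl chamber on which $(k+\hvee)\pair{\beta}{\beta}>-c_\beta$.

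I expect the main obstacle to be precisely this last step for small $|\beta|$: the quadratic main term disposes of all large $\beta$ uniformly, but pinning down $c_\beta$ exactly and excluding $d_\beta\le 0$ for the finitely many small critical values is where the affine root-system data — and a more refined local model for the spin-c structure $S_0$ near the critical sets — genuinely enter. The remaining ingredients (the Minkowski-sum reduction, the constancy of the $T_\beta$-weights of $S_\beta$, and the determinant-line computation) are routine.
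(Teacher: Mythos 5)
You should first be aware that this paper does not actually prove Theorem \ref{thm:Bounddbeta}: the text explicitly defers the proof to \cite{LMVerlindeQR} (see also \cite{YiannisThesis}), stating that it requires two ingredients not developed here --- a more refined local description of the spinor module $S_0$ on $\Y$ (needed to evaluate the constant $d_\beta$), and a somewhat subtle inequality involving the data of the affine Lie algebra $\wh{L\g}$, which plays the role of the ``magical inequality'' of \cite{ParadanVergneSpinc}. So there is no in-paper argument to compare against; the question is whether your proposal is self-contained, and it is not.

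Your preliminary reductions are sensible and are likely part of any proof: the Minkowski-sum splitting of the weight infimum, the observation that $T_\beta$ acts on the fibrewise irreducible module $S_\beta$ by a single character (so its pairing with $\beta$ is half that of $\det(S_\beta)$), the bookkeeping through $\det(S_\beta)=\det(S)\otimes\det(\nu_\beta)$ with $S=S_0\otimes L$, and the Kostant condition producing the $k\pair{\beta}{\beta}$ term. But at the two points where positivity is actually decided you substitute assertion or citation for proof. First, the evaluation $\pair{c_1(\det S_0)}{\beta}=2\hvee\pair{\beta}{\beta}+O(|\beta|)$, with a constant uniform in $\beta$ and with the precise linear term needed to make your claimed cancellation work (as written, $-\pair{2\rho_\beta}{\beta}+\pair{\rho_\beta}{\beta}$ does not cancel, so the unspecified ``shift'' of $\det(S_0)$ is doing real work), is exactly the refined local description of $S_0$ that the theorem is said to require; you assume it rather than derive it from \cite{LMSspinor}. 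Second, after the quadratic term handles large $|\beta|$, you dispose of the finitely many remaining $\beta$ by ``invoking the inequality involving the data of $\wh{L\g}$ established in \cite{YiannisThesis}'' --- that is, by citing the very statement whose proof is the substance of the theorem; nothing in your argument shows why membership of $\beta$ in the critical set forces $(k+\hvee)\pair{\beta}{\beta}$ to beat the correction there. A smaller issue: $d_\beta$ is defined in \eqref{eqn:dbeta} as an infimum over all $T_\beta$-weights of $\wedge\n_-\otimes S_\beta$ over $\mathring{U}^\beta$, so fixing a single component meeting $Z_\beta\cap\X$ does not compute $d_\beta$ without an argument that the other components cannot lower the infimum (or a choice of $U_\beta$ that removes them). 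In short, your proposal is a reasonable reduction of the statement to the same external inputs the paper defers to, but it is not a proof of it.
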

This will be proved in detail in \cite{LMVerlindeQR} (see also  \cite{YiannisThesis}); it relies on a more detailed local description of the spinor module $S_0$ (to get an expression for the constant $d_\beta$), and a slightly subtle inequality involving the data of the affine Lie algebra $\wh{L\g}$.  This inequality is perhaps the most interesting aspect of Theorem \ref{thm:Bounddbeta}; it plays the same role as the `magical inequality' in \cite{ParadanVergneSpinc}.

\medskip
\noindent (4) \textbf{Conclusion.}\\
By Proposition \ref{prop:SuppBott} and Theorem \ref{thm:Bounddbeta}, $\index(\sigma_{\beta,\theta}\otimes \Sym(\nu_\beta))_0=0$ unless $\beta=0$.  Theorem \ref{thm:NormSqrLoc} then gives $\index(\scr{D})_0=\index(\sigma_{0,\theta})_0$.  Theorem \ref{thm:QR0} now follows from Proposition \ref{prop:localQR}.

\bibliographystyle{amsplain} 
\providecommand{\bysame}{\leavevmode\hbox to3em{\hrulefill}\thinspace}
\providecommand{\MR}{\relax\ifhmode\unskip\space\fi MR }
\providecommand{\MRhref}[2]{%
  \href{http://www.ams.org/mathscinet-getitem?mr=#1}{#2}
}
\providecommand{\href}[2]{#2}

\end{document}